\title{Strong solutions to degenerate SDEs and uniqueness for degenerate Fokker--Planck equations}
\author{Sebastian Grube\thanks{
                  Faculty of Mathematics, Bielefeld University, 33615 Bielefeld, Germany. E-Mail: sgrube@math.uni-bielefeld.de}
        }
\newcommand{\law}[1]{{\mathcal L}_{#1}}
\newcommand{\LN}{\mathbb N}
\newcommand{\RR}{\mathbb R}
\newcommand{\BBB}{\mathbb B}
\newcommand{\BBBB}{\mathcal B}
\newcommand{\EEEE}{\mathcal E}
\newcommand{\E}{\mathbb{E}}
\newcommand{\FF}{\mathcal F}
\newcommand{\PP}{\mathbb P}
\newcommand{\PPPP}{\mathcal P}
\newcommand{\SSS}{\mathscr S}
\newcommand{\inv}{^{-1}}
\newcommand{\rd}{{\RR^d}}
\newcommand{\scalarproduct}[3][]{\langle #2, #3\rangle_{#1}}
\DeclareMathOperator{\divv}{\mathrm{div}}
\DeclareMathOperator{\M}{\mathrm{M}}
\newcommand{\norm}[2][]{\ifthenelse{\isempty{#1}}
	{\left\lVert#2\right\rVert}
	{\left\lVert#2\right\rVert_{{#1}}}
}
\newtheorem{theorem}{Theorem}[section]
\newtheorem{lemma}[theorem]{Lemma}
\newtheorem{definition}[theorem]{Definition}
\newtheorem{remark}[theorem]{Remark}
\newtheorem{proposition}[theorem]{Proposition}
\begin{document}
\newpage
\maketitle
\begin{abstract}
We prove the existence of probabilistically strong solutions for large classes of possibly degenerate stochastic differential equations with unbounded and locally Sobolev-regular coefficients, using the restricted Yamada--Watanabe theorem. Our approach relies on existence results for the corresponding Fokker--Planck equation, combined with both novel and existing restricted pathwise uniqueness results for SDEs. Here, restricted pathwise uniqueness means pathwise uniqueness among a subclass of weak solutions to the SDE. Furthermore, we derive new uniqueness results for the Fokker--Planck equation.
\end{abstract}
\vspace{0.5em}
\noindent\textbf{Mathematics Subject Classification (2020):} 
35A02,
35C99,
35K65,
35Q84,
35R05,
60G17,
60H10,
60H30.
\\
\textbf{Keywords:} Stochastic differential equation, degenerate, probabilistically strong solution, pathwise uniqueness, weak uniqueness, Yamada--Watanabe theorem, Fokker--Planck equation.
\section{Introduction}\label{section.introduction}
We consider the following  stochastic differential equation (abbreviated as SDE) in $\rd$, $d\in \LN$,  of the form
\begin{align}\label{SDE}
	\mathrm{d}X(t) \notag
	=&\ b(t,X(t))\ \mathrm{d}t  + \sigma(t,X(t))\ \mathrm{d} W(t), \notag \\
	X(0)=&\ X_0,\tag{SDE}
\end{align}
where $t\in [0,T]$, $T\in (0,\infty)$, $(W(t))_{t\in[0,T]}$ is a standard $d_1$-dimensional  $(\FF_t)$-Brownian motion, $d_1 \in \LN$, and $X_0$ an $\RR^d$-valued $\FF_0$-measurable function on some stochastic basis  $(\Omega,\FF,\PP;(\FF_t)_{t\in[0,T]})$, i.e.  a complete, filtered probability space, where $(\FF_t)_{t\in[0,T]}$ is a normal filtration.
Here, we assume that the drift and diffusion coefficient are Borel measurable functions 
\begin{align}\label{SDE.SDEcoefficients}
	b : [0,T]\times\rd  \to \rd,\ \ \
	\sigma: [0,T]\times\rd \to \RR^{d\times d_1}.
\end{align}

Furthermore, we consider the following second order partial differential equation of type 
\begin{align}\label{FPKE}\tag{FPE}
	\partial_t \mu_t &= L^*_t\mu_t \text{ on } (0,T)\times\rd\\
		\left.\mu\right|_{t=0}&=\nu, \notag
\end{align}
where $L^*_t$ is the formal adjoint of the (linear) Kolmogorov operator $L_t$ in $L^2$ given by
\begin{align}\label{SDE.definition.generator}
	L_t := L_t(a,b):= \sum_{i=1}^d b^i(t,x)\partial_{x_i} + \sum_{i,j=1}^d a^{ij}(t,\cdot)\partial_{x_i}\partial_{x_j},
\end{align}
where $b$ is as in \eqref{SDE.SDEcoefficients} and $a: [0,T]\times \rd \to S_+(\rd)$ a Borel measurable function. Here, $S_+(\rd)$ consists of all $d\times d$-dimensional nonnegative symmetric real matrices. Equations of the form \eqref{FPKE} are called \textit{Fokker--Planck(--Kolmogorov) equations}.

Note that \eqref{SDE} and \eqref{FPKE} share a deep connection, which is twofold. Let $(X,W)$ be a probabilistically weak solution to \eqref{SDE}. Then, using It\^o's formula, the solution's time-marginal laws, i.e. $\mu_t:=\PP\circ X(t)\inv, t\in [0,T]$, solve \eqref{FPKE} in the Schwartz-distributional sense, meaning that for all $\varphi \in C_c^\infty(\rd)$
\begin{align}\label{SDE.intro.FPKE.distrSol}
		\int_\rd \varphi(x)\ \mu_t(\mathrm{d}x)
		= \int_\rd \varphi(x)\ \nu(\mathrm{d}x)
		+ \int_0^t \int_\rd (L_s\varphi)(x)\ \mu_s(\mathrm{d}x)\mathrm{d}s  \ \ \forall t \in [0,T].
	\end{align}
	On the other hand, via the \textit{Ambrosio--Figalli--Trevisan superposition principle}, a narrowly continuous curve of probability measures $(\mu_t)_{t\in [0,T]}$ (that is, $[0,T]\ni t \mapsto \int \varphi\ \mathrm{d}\mu_t$ is continuous for all $\varphi \in C_b(\rd)$) solving \eqref{FPKE} for $a = {\sigma\sigma^*\slash 2}$ in the Schwartz-distributional sense can be 'lifted' to a weak solution $(X,W)$ to \eqref{SDE} in the sense that $\law{X(t)}=\mu_t$ for all $t\in [0,T]$. This is true for a large class of coefficients $b$ and $\sigma$ fulfilling a mild integrability condition with respect to $(\mu_t)_{t\in [0,T]}$, e.g. $b^i, a^{ij} \in L^1([0,T]\times\rd;\mu_t\mathrm{d}t)$, $1\leq i,j \leq d$.
The superposition principle appears to be initially introduced by Ambrosio \cite{ambrosio2004transport} in the ODE-case (i.e. $\sigma \equiv 0$) and later on generalised to SDEs by Figalli \cite{figalli2008existence}. Further extensions were made by Trevisan \cite{trevisan2016super}, and Bogachev, R\"ockner and Shaposhnikov \cite{bogachev2021super}.
In particular, the superposition principle serves as an effective tool for obtaining weak solutions to \eqref{SDE}  provided the existence of a solution to \eqref{FPKE} with suitable integrability properties is known. The literature on the existence (and also uniqueness) of solutions to \eqref{FPKE} is quite rich, see e.g. \cite{bogachev2015FPKE} and the references therein.

The aim of this paper is to demonstrate that, under fairly general assumptions on the coefficients $b,\sigma$ and the initial distribution, each probabilistically weak solution $(X,W)$, constructed from a sufficiently regular Schwartz-distributional solution to the corresponding Fokker--Planck equation via the superposition principle, is indeed a functional of the driving Brownian motion $W$, i.e. a probabilistically strong solution. Here, our main contribution in this regard is Theorem \ref{introduction.theorem.mainresult} below.
The core of our approach is the restricted Yamada--Watanabe theorem (see Theorem \ref{theorem.restrictedYamadaWatanabe}), first introduced in \cite{grube2021strong, grube2022thesis}.
This theorem offers a method to prove the existence of strong solutions to \eqref{SDE}, provided that the existence of a weak solution is known and pathwise uniqueness holds among a (potentially small) class of weak solutions.
Such \textit{restricted} pathwise uniqueness results can be found in, e.g.~\cite{roeckner2010weakuniqueness}, \cite{luo2014uniq} and \cite{champagnat2018strong}.
In this paper, we combine the techniques of \cite{roeckner2010weakuniqueness}, and \cite{champagnat2018strong} to prove, in particular, that if $b \in L^1_t(W^{1,1}_{x,\mathrm{loc}})$ and $\sigma \in L^2_t(W^{1,2}_{x,\mathrm{loc}})$, pathwise uniqueness holds among weak solutions to \eqref{SDE} with uniformly bounded time-marginal law densities, see Theorem \ref{SDE.PU.theorem.mainresult} and also Theorem \ref{SDE.PU.theorem.mainresult.abstract} for our most general version. If $b \in L^1_t(W^{1,1}_{x})$ and $\sigma \in L^2_t(W^{1,2}_{x})$ this result has already been obtained in \cite{champagnat2018strong}.
To the best of our knowledge, the case where $b \in L^1_t(W^{1,1}_{x,\mathrm{loc}})$ has not yet been fully explored in the literature. For a comparison with \cite{roeckner2010weakuniqueness} and \cite{luo2014uniq}, we refer to Remark \ref{SDE.PU.remark.OSLcounterexample} and Remark \ref{SDE.PU.remark.SIcounterexample}, respectively.
Moreover, in \cite{roeckner2010weakuniqueness} and \cite{luo2014uniq}, the authors use their restricted pathwise uniqueness result in conjunction with Figalli's superposition principle to derive (restricted) uniqueness results for \eqref{FPKE} under general conditions on the coefficients.
By utilising the recent generalisation of the Ambrosio--Figalli--Trevisan superposition from \cite{bogachev2021super} and the aforementioned  restricted pathwise uniqueness results, we extend the results in \cite{roeckner2010weakuniqueness} to unbounded coefficients and, additionally, obtain new uniqueness results for \eqref{FPKE}, see Section \ref{section.mainResults.uniquenessFPKE}.

The fundamental question of whether probabilistically strong solutions to \eqref{SDE} exist is of course well-studied under various conditions on the coefficients.
When the diffusion coefficient is allowed to be degenerate, the existence of strong solutions was proven in \cite{yamada1971yamada2, kumar2013degenerate, luo2011hoelder, wang2020existence}. Let us, however, point out that in all of these sources the diffusion coefficient is assumed to be continuous in the spacial variable, often even (locally) H\"older continuous.
In \cite{champagnat2018strong}, the authors proved the existence of probabilistically strong solutions to \eqref{SDE} with bounded and, in particular, (globally) Sobolev-regular coefficients, allowing for discontinuous and degenerate diffusion coefficients.
In this work, pairing our procedure, based on the superposition principle and the restricted Yamada--Watanabe theorem, Theorem \ref{SDE.mainresult.abstract}, and the restricted pathwise uniqueness result formulated as Theorem \ref{SDE.PU.theorem.mainresult.abstract}, we essentially generalise \cite[Theorem 2.13 (i)]{champagnat2018strong}.
Indeed, in the case when there exists a family of probability measures $(\mu_t)_{t\in [0,T]}$ solving \eqref{FPKE} in the Schwartz-distributional sense, which can be represented by a locally $p$-integrable function in time and space, we prove the following theorem. It is a direct consequence of Theorem \ref{SDE.mainresult.abstract} combined with Theorem \ref{SDE.PU.theorem.mainresult.abstract}.
\begin{theorem}\label{introduction.theorem.mainresult}
	Let $b,\sigma$ as in \eqref{SDE.SDEcoefficients}, $a :=  \sigma\sigma^*/2$, $p \in [1,\infty), p' \in (1,\infty]$ such that $\nicefrac{1}{p}+\nicefrac{1}{p'}=1$.
	Assume that $\sigma \in L^{2p}([0,T];W^{1,2p}_{\mathrm{loc}}(\rd)), b \in L^p_{loc}([0,T]\times\rd)$.
	Assume that there exists a narrowly continuous curve of probability measures $(\mu_t)_{t\in [0,T]}$ such that $\mu_t = u(t,x)dx$ for some $u \in L^{p'}_{\mathrm{loc}}([0,T]\times\rd)$, which satisfies
	\begin{align*}
		\int_0^T \int_\rd \frac{|\scalarproduct[\rd]{b(t,x)}{x}|+|a(t,x)|}{1+|x|^2}\ \mathrm{d}x\mathrm{d}t < \infty,
	\end{align*} and solves \eqref{FPKE} in the sense of \eqref{SDE.intro.FPKE.distrSol}.
	Furthermore, assume that $b(t,\cdot) \in BV_{\mathrm{loc}}(\rd)$, for a.e. $t\in [0,T]$, and that for every radius $R>0$, there exists a super-linear function $\phi_R$ of modest growth (see Definition \ref{definition.superlinearity}), $\varepsilon_0=\varepsilon_0(R)\in (0,1)$, and a constant $C_{T,R}>0$, which only depends on $T$ and $R$, such that for all $0<\varepsilon<\varepsilon_0$ and all $\gamma \in \rd$,
	\begin{align}\label{introduction.theorem:Meps}
		\sup_{0<\epsilon <\epsilon_0} \frac{\varepsilon\phi(\varepsilon\inv)}{|\ln(\varepsilon)|}\int_0^T \int_{B_R(0)} \M_{\varepsilon\inv}^{R+2} (D_x b(t,\cdot))(x-\gamma) u(t,x)dxdt \leq C,
	\end{align} 
	(for the definition of operators of the form $\M_L^R, L \geq 1,$ see Definition \ref{SDE.PU.definition.MRL}).
	
	Then there exists a probabilistically strong solution to \eqref{SDE}, which is pathwise unique among probabilistically weak solutions with time-marginal law densities $u$.
\end{theorem}
\begin{remark}[Cf. Proposition \ref{SDE.PU.proposition.SobolevMLR}]
	Let $\tilde{p} \in [1,\infty)$, $\tilde{p}' \in (1,\infty]$ such that $\nicefrac{1}{\tilde{p}}+\nicefrac{1}{\tilde{p}'}=1$. Let $\tilde{u} \in L^{\tilde{p}}_{\mathrm{loc}}([0,T]\times\rd)$ and $\tilde{b} \in L^{\tilde{p}}([0,T];W^{1,\tilde{p}'}_{\mathrm{loc}}(\rd))$. Then there exists a super-linear function $\phi$ of modest growth such that \eqref{introduction.theorem:Meps} holds for $b$ replaced by $\tilde{b}$, and $u$ replaced by $\tilde{u}$.
\end{remark}
Let us compare  the above theorem with the main result on the existence of a strong solution to \eqref{SDE} in \cite[Theorem 2.13 (i)]{champagnat2018strong}. In contrast to the assumptions in Champagnat's and Jabin's result, here the coefficients $b$ and $\sigma$ may be unbounded, and their spacial Schwartz-distributional derivatives only need to satisfy, in a sense, a local integrability condition. We would like to emphasise that for $\mu$ being a vector-valued Radon measure on $\rd$, $\M_{L}^{R}(\mu)$ (as in \eqref{introduction.theorem:Meps}) only depends on $\left.\mu\right|_{\BBBB(B_R(0))}$. A similar condition as in \eqref{introduction.theorem:Meps} was first introduced in \cite[Theorem 2.13 (i)]{champagnat2018strong} with respect to the operators $\M_{L}\equiv \M_{L}^{\infty}, L\geq 1$. We note that with $\M_{L}^{R}, L\geq 1,$ we 'localise' the operators $\M_{L}, L\geq 1,$ in this work.
Moreover, our approach is different to the one in \cite[Theorem 2.13 (i)]{champagnat2018strong}. Champagnat's and Jabin's result requires sufficiently nice approximations of the coefficients of \eqref{SDE}, as well as regular and, in a sense, 'well-behaved' solutions to the corresponding (approximating) Fokker--Planck equations, converging in weak sense to a solution to \eqref{FPKE}, see \cite[Theorem 2.13 (i)]{champagnat2018strong} for details. In the above theorem this is not needed; it is more direct in this sense and merely relies on the existence of a sufficiently nice solution to \eqref{FPKE}, and a (restricted) pathwise uniqueness result for \eqref{SDE}.

In Section \ref{section.mainResults.strongSolutionsSDE}, we provide large classes of coefficients $b$ and $\sigma$ for which a probabilistically strong solution to \eqref{SDE} exists. These results rely on results on the existence of sufficiently regular solutions to \eqref{FPKE}, so that the above theorem may be applied (see also Section \ref{section.SDE.applications.SDE.1}).
Here, we would like to stress that such $b$ and $\sigma$ are neither generically spacially continuous, locally bounded, nor is such $\sigma$ necessarily bounded away from zero on sets of positive Lebesgue measure, see Remark \ref{SDE.application.SDE.remark.ams} for a concrete example.
To the best of our knowledge, our results do not seem to be covered by the literature so far.

Moreover, we refer to \cite{grube2021strong, grube2022strongTime, grube2024strongDegenerate}, where the restricted Yamada--Watanabe theorem was used to prove the existence of probabilistically strong solutions for a large class of McKean--Vlasov SDEs with coefficients of Nemytskii-type. Such coefficients structurally lack usual continuity properties in their measure variable, e.g. with respect to the topology induced by the narrow convergence of probability measures. This is in contrast to large parts of the literature, where coefficients with such continuity properties are considered (see \cite{delarue2018mckean} and the references therein).\\

This paper is structured as follows.
In Section \ref{section.solutions}, we introduce a precise notion of solution for \eqref{SDE} and \eqref{FPKE}.
In Section \ref{section.procedure}, we recall the restricted Yamada--Watanabe theorem from \cite{grube2021strong, grube2022thesis} and recall the superposition principle from \cite{bogachev2021super}. We then use these results in order to create a procedure on how to obtain probabilistically strong solutions on the basis of Schwartz-distributional solutions to the corresponding Fokker--Planck equation.
In Section \ref{section.mainResults}, we formulate our main results; in Subsection \ref{section.mainResults.strongSolutionsSDE}, we present new results regarding the existence of probabilistically strong solutions to degenerate \eqref{SDE} and, in Subsection \ref{section.mainResults.uniquenessFPKE}, we present new uniqueness results for \eqref{FPKE}, which are obtained via the technique of \cite{roeckner2010weakuniqueness}.
In Section \ref{section.SDE.pathwiseUniqueness}, we prove a generalisation of the restricted pathwise result from \cite{champagnat2018strong} on the basis of the techniques in \cite{champagnat2018strong} and \cite{roeckner2010weakuniqueness}. 
	This result is used in the subsequent sections.
In Section \ref{section.SDE.applications.SDE}, we prove the main results from Section \ref{section.mainResults.strongSolutionsSDE}.
In Section \ref{section.SDE.applications.FPKE}, we prove the main results from Section \ref{section.mainResults.uniquenessFPKE}.

 This paper emerged from the authors PhD thesis \cite[Chapter 2]{grube2022thesis}.
\subsection*{Notation}
Throughout this paper, we use the following notation, which resembles the one in \cite{grube2024strongDegenerate}. 

The $d$-dimensional real space is denoted by $\rd$ and considered with the usual scalar product $\scalarproduct[\rd]{\cdot}{\cdot}$ and euclidean norm $|\cdot|_\rd$. We suppress the norm's dependence on the dimension $d$, i.e. we write $|\cdot|_\rd=|\cdot|$, if $d$ can be inferred from the context.
For a topological space $(T,\tau)$, the Borel-$\sigma$-algebra is denoted by $\BBBB(\tau)$.
By $\PPPP(T)$ we denote the set of all probability measures on $(T,\BBBB(\tau))$. If $T=\rd$, $\PPPP_0(\rd)$ shall denote the subset of $\PPPP(\rd)$ whose elements have a density with respect to Lebesgue measure on $\rd$.
Let $(\Omega,\FF,\PP)$ be a probability space and $(\Omega',\FF')$ a measurable space. Let $X:\Omega \to \Omega'$ be an $\FF\slash\FF'$-measurable function. Then $\law{X}:=\PP\circ X\inv$ denotes the law of $X$.
Let $n,m \in \LN$. The set of all $\RR^{n\times m}$-valued Radon measures is denoted by $\mathcal{M}_{\mathrm{loc}}(\rd;\RR^{n\times m})$. For $\mu \in \mathcal{M}_{\mathrm{loc}}(\rd;\RR^{n\times m})$, $|\mu|$ denotes its variation. If $n=m=1$, then we write $\mathcal{M}_{\mathrm{loc}}(\rd;\RR^{n\times m})=\mathcal{M}_{\mathrm{loc}}(\rd)$.
By $\mathcal{M}_b(\rd;\RR^{n\times m})$ we denote the set of all Radon measures $\mu \in \mathcal{M}_{\mathrm{loc}}(\rd;\RR^{n\times m})$ such that $|\mu|(\rd) <\infty$.
By $\mathcal{M}_+(\rd)$ we denote the space of all nonnegative Borel measures on $\rd$.
 Let $I\subseteq \RR$ be an interval. A family $(\mu_t)_{t\in I}\subseteq \mathcal{M}_{\mathrm{loc}}(\rd)$ is said to be vaguely continuous if for all $\varphi \in C_c(\rd)$ and all $t_0 \in I$
\begin{align}\label{notation:weakVagueContinuous}
	\int_\rd \varphi(x)\ \mu_t(\mathrm{d}x) \to \int_\rd\varphi(x)\ \mu_{t_0}(\mathrm{d}x) \text{, whenever $t\in I, t\to t_0$.}
\end{align}
If $(\mu_t)_{t\in I}\subseteq \mathcal{M}_b(\rd;\RR^{n\times m})$ and \eqref{notation:weakVagueContinuous} holds for all $\varphi \in C_b(\rd)$, then we say that $(\mu_t)_{t\in I}$ is narrowly continuous.
Let $(E,||\cdot||_E)$ be a Banach space. By $C([0,T];E)$ we denote the set of continuous functions from $[0,T]$ to $E$. We always consider $C([0,T];E)$ together with the usual supremum's norm. Furthermore, $C([0,T];E)_0$ shall consist of all $w\in C([0,T];E)$ such that $w(0)=0$. We consider $C([0,T];E)_0$ with the supremum's norm on $C([0,T];E)_0$.
For $t\in [0,T]$, $\pi_t: C([0,T];E) \to E$ denotes the canonical evaluation map at time $t$, i.e. $\pi_t(w):=w(t), w\in C([0,T];E)$. Furthermore, we set $\BBBB_t(C([0,T];E)):= \sigma(\pi_s : s\in [0,t])$ and, correspondingly, $\BBBB_t(C([0,T];E)_0):= \sigma(\pi_s: s\in [0,t])\cap C([0,T];E)_0$.
Moreover, $\PP^W$ denotes the Wiener measure on $(C([0,T];\rd)_0,\BBBB(C([0,T];\rd)_0))$.

Let $(S,\SSS,\eta)$ be a measure space. For $1\leq p \leq \infty$, $(L^p(S;E), \norm[{L^p(S;E)}]{\cdot})$ denotes the usual Bochner space (with usual norm) of strongly measurable $E$-valued functions $f$ on $S$ for which $\norm[E]{f}^p$ is integrable if $1\leq p <\infty$ and usual adaption if $p=\infty$. 
If $S=\RR^n$ and $E=\RR$, we just write $L^p(\RR^n;\RR)=L^p(\RR^n)$. The set of strongly measurable functions on $\RR^n$ with values in $E$ which are locally $p$-integrable in norm on $E$, will be denoted by $L^p_{\mathrm{loc}}(\RR^n;E)$.
Furthermore, $(W^{k,p}(\RR^n), \norm[{W^{1,p}(S;E)}]{\cdot})$ denotes the usual Sobolev space, containing all $L^p(\RR^n)$-functions, whose Schwartz-distributional derivatives up to $k$-th order can be represented by elements in $L^p(\RR^n)$. 
Accordingly, $E$-valued Sobolev functions on $\RR^n$ will be denoted by $W^{k,p}(\RR^n;E)$.
The local analogues of $W^{k,p}(\RR^n;E)$ are denoted by $W_{\mathrm{loc}}^{k,p}(\RR^n;E)$.
Moreover, $BV_{\mathrm{loc}}(\rd;\rd)$ denotes the set of functions which are locally of bounded variation, i.e. it consists of all $f\in L^1_{\mathrm{loc}}(\rd;\rd)$ such that its Schwartz-distributional derivative $Df$ can be represented as an element in $\mathcal{M}_{\mathrm{loc}}(\rd;\RR^{d\times d})$.
In general, throughout this paper the Jacobian $D=D_x$, divergence $\divv=\divv_x$, and the gradient $\nabla=\nabla_x$ are always considered in the Schwartz-distributional sense and with respect to the spacial variable.
Furthermore, for every real-valued function, we set $f^- := -\min (f,0)$ and $f^+ := \max(f,0)$.
Also, we set $s\wedge t := \mathrm{min}(t,s),\ s\vee t := \mathrm{max}(t,s)$ for all $s,t \in \RR$.
\section{Solutions for \eqref{SDE} and \eqref{FPKE}}\label{section.solutions}
\subsection{Solutions for \eqref{SDE}}
Throughout this section, let $\nu \in \PPPP(\rd)$ and $P_\nu \subseteq \PPPP(C([0,T];\rd))$ such that
\begin{align}\label{solutions:setOfMeasuresOnPathSpace}
	P_\nu \subseteq \{Q \in \PPPP(C([0,T];\rd)) : Q \circ \pi_0\inv = \nu \}.
\end{align}

We have the following definitions.
\begin{definition}[$P_\nu$-weak solution]\label{SDE.SDE.definition.weakSolution}
A tuple $(X,W)=(X(t),W(t))_{t\in [0,T]}$ consisting of two $(\FF_t)$-adapted $\RR^d$-valued continuous stochastic processes
 on some given stochastic basis
	$(\Omega,\FF,\PP;(\FF_t)_{t\in [0,T]})$
is called a $P_\nu$-weak solution to \eqref{SDE} if
\begin{enumerate}[label=(\roman*)]
	\item 
		$\int_0^T\int_K |b(t,x)|+|\sigma(t,x)|^2\ \law{X(t)}(\mathrm{d}x)\mathrm{d}t<\infty\ \ \forall K\subset\rd$ compact,
	\item $(W(t))_{t\in [0,T]}$ is a standard $d_1$-dimensional $(\FF_t)$-Brownian motion,
	
	\item the following equality holds $\PP$-a.s.:
		 \begin{align*}
		 	X(t)=X(0) + \int_0^t b(s,X(s))\ \mathrm{d}s + \int_0^t \sigma(s,X(s))\ \mathrm{d}W(s)\ \ \forall t\in[0,T].
		 \end{align*}
	\item $\PP\circ X\inv \in P_\nu$. (In particular, $\PP\circ X(0)\inv = \nu$.)
\end{enumerate}

In the case $P_\nu=\{Q \in \PPPP(C([0,T];\rd)): Q\circ \pi_0\inv = \nu\}$, a $P_\nu$-weak solution is also called a weak solution with initial distribution $\nu$.
\end{definition}
It is well known that, loosely speaking, the notion of weak solution is equivalent to the notion of solution to the associated martingale problem. In particular, this allows the study of weak solutions to \eqref{SDE} from a rather analytic point of view. The martingale approach was initiated in \cite{stroock1969diffusion1,stroock1969diffusion2} by Stroock and Varadhan and the theory was issued on a large scale in the authors' book \cite{stroock2006multi}. The following definition of the martingale problem is essentially taken from \cite{bogachev2021super}.
\begin{definition}[solution to the martingale problem]\label{SDE.definition.martingaleSolution}
Let $\nu \in \PPPP(\rd)$. We say that a probability measure $Q \in \PPPP(C([0,T];\rd))$ is a solution to the martingale problem associated with the operator $L=L(a,b)$ as in \eqref{SDE.definition.generator} starting at $\nu$ if
\begin{enumerate}[label=(\roman*)]
	\item $\int_0^T\int_{K}(|b(t,x)| + |a(t,x)|)\  (Q\circ\pi_t\inv)(\mathrm{d}x)\mathrm{d}t< \infty$\ \ $\forall K\subseteq \rd$ compact,
	\item for each $\varphi \in C_c^\infty(\rd)$, the family of random variables
		\begin{align*}
			t\mapsto \varphi\circ\pi_t - \varphi\circ\pi_0 - \int_0^t (L_s \varphi)(\pi_s) \mathrm{d}s
		\end{align*}
		is an $(\FF_t)$-martingale with respect to $(C([0,T];\rd), \BBBB(C([0,T];\rd)), Q)$, where ${\FF_t := \BBBB_t(C([0,T];\rd))}$, $t\in [0,T]$.
		\item $Q\circ \pi_0\inv = \nu$ on $(\rd, \BBBB(\rd))$.
\end{enumerate}

\end{definition}
\begin{definition}[$P_\nu$-weak uniqueness]
We say that $P_\nu$-weak uniqueness holds for \eqref{SDE}, if every two $P_\nu$-weak solutions $(X,W)$, $(Y,W')$ on stochastic bases $(\Omega,\FF,\PP;(\FF_t)_{t\in [0,T]})$ and $(\Omega',\FF',\PP';(\FF'_t)_{t\in [0,T]})$, respectively, have the same law, i.e.
\begin{align*}
	\PP\circ X\inv = \PP'\circ Y\inv
\end{align*}
(as measures on $\BBBB(C([0,T];\rd))$).
In the case $P_\nu=\{Q \in \PPPP(C([0,T];\rd)): Q\circ \pi_0\inv = \nu\}$, $P_\nu$-weak uniqueness is also called weak uniqueness among weak solutions with initial distribution $\nu$.
\end{definition}
The following result shows that the notion of a weak solution to \eqref{SDE} is indeed equivalent to the notion of a solution to the just introduced martingale problem with respect to $L({\sigma\sigma^*\slash 2},b)$ in terms of existence and uniqueness.

The following proposition can be found in \cite[Proposition 2.2.3]{grube2022thesis} and it is based on \cite[Proposition 4.11, Prob. 4.13, Corollary 4.8, Corollary 4.9]{karatzas91}.

\begin{proposition}[MP $\rightleftarrows$ weak solutions]\label{SDE.proposition.martingaleSolutionEQweaksolution}
	Let $\nu \in \PPPP(\rd)$ and $b,\sigma$ as in \eqref{SDE.SDEcoefficients}.
	We abbreviate $L=L(\sigma\sigma^*\slash 2,b)$.
	There exists a weak solution $(X,W)$ to \eqref{SDE} with initial distribution $\nu$ if and only if there exists a solution to the martingale problem $Q$ associated with the operator $L$ starting at $\nu$. In this case, the relation $\law{X}=Q$ can be chosen to hold.
	Moreover, weak uniqueness holds for \eqref{SDE} among weak solutions with initial distribution $\nu$ if and only if the martingale problem associated with the operator $L$ starting at $\nu$ has a unique solution.
\end{proposition}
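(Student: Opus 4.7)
The plan is to prove both existence equivalences and then deduce the uniqueness claim from them. This result is essentially classical (and indeed cited from \cite{karatzas91}), so the focus will be on laying out how each direction goes and where the technical care is needed, rather than producing a new argument.

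For the direction weak solution $\Rightarrow$ martingale problem, I would take a $P_\nu$-weak solution $(X,W)$ with $P_\nu = \{Q : Q\circ\pi_0\inv=\nu\}$ and set $Q:=\law{X}$ on $C([0,T];\rd)$. Condition (i) of Definition \ref{SDE.definition.martingaleSolution} is immediate from condition (i) of Definition \ref{SDE.SDE.definition.weakSolution} applied to $a=\sigma\sigma^*\slash 2$ together with $|\sigma\sigma^*|\lesssim |\sigma|^2$. For (ii), I would apply It\^o's formula to $\varphi(X(t))$ for $\varphi \in C_c^\infty(\rd)$; the result is
\begin{equation*}
\varphi(X(t))-\varphi(X(0))-\int_0^t (L_s\varphi)(X(s))\,\ds = \int_0^t \scalarproduct{\nabla\varphi(X(s))}{\sigma(s,X(s))\,\mathrm{d}W(s)},
\end{equation*}
with $L=L(\sigma\sigma^*\slash 2,b)$. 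Since $\nabla\varphi$ has compact support, condition (i) of Definition \ref{SDE.SDE.definition.weakSolution} implies that the right-hand side is a genuine (not merely local) martingale in the filtration $(\FF_t)$ on $\Omega$. Pushing this forward under $X$ to the canonical filtration on $C([0,T];\rd)$ and combining with a standard monotone class argument then gives the martingale property in the sense of Definition \ref{SDE.definition.martingaleSolution}, and (iii) holds by construction.

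For the direction martingale problem $\Rightarrow$ weak solution, let $Q$ solve the martingale problem and work on the canonical path space $(\Omega_0,\FF_0,Q) := (C([0,T];\rd),\BBBB(C([0,T];\rd)),Q)$ with canonical process $X:=(\pi_t)_{t\in[0,T]}$ and the usual completion of $(\BBBB_t(C([0,T];\rd)))$. Testing the martingale property against coordinate functions $\varphi_k(x) = x_k$ (via localisation/cutoff, using (i) of Definition \ref{SDE.definition.martingaleSolution}) shows that
\begin{equation*}
M(t) := X(t)-X(0)-\int_0^t b(s,X(s))\,\ds
\end{equation*}
is a continuous $\rd$-valued local $(\FF_t)$-martingale with quadratic covariation $\langle M^i,M^j\rangle_t = \int_0^t (\sigma\sigma^*)^{ij}(s,X(s))\,\ds$. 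To obtain a Brownian motion $W$ satisfying the SDE, I would invoke the martingale representation theorem for continuous martingales with absolutely continuous quadratic variation (this is exactly the role of \cite[Prop.~4.11, Prob.~4.13, Cor.~4.8, Cor.~4.9]{karatzas91}): on an enlargement $(\Omega_0\times\Omega_1,\FF_0\otimes\FF_1,Q\otimes\PP_1)$ carrying an independent $d_1$-dim.\ Brownian motion $\tilde W$, define
\begin{equation*}
W(t) := \int_0^t \sigma(s,X(s))^+\,\mathrm{d}M(s) + \int_0^t \bigl(I-\sigma(s,X(s))^+\sigma(s,X(s))\bigr)\,\mathrm{d}\tilde W(s),
\end{equation*}
where $\sigma^+$ denotes the Moore--Penrose pseudoinverse. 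A direct computation of brackets verifies that $W$ is a standard $d_1$-dim.\ $(\FF_t)$-Brownian motion and that $\int_0^t\sigma(s,X(s))\,\mathrm{d}W(s) = M(t)$, whence $(X,W)$ is a $P_\nu$-weak solution with $\law X = Q$.

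The uniqueness claim follows at once: if weak uniqueness holds among weak solutions with initial distribution $\nu$, then by the first direction every MP-solution equals $\law X$ for such a weak solution, so MP-solutions are unique; conversely, if MP-solutions are unique, then by the first direction any two weak solutions $(X,W), (Y,W')$ produce the same $Q=\law X=\law Y$. The main technical obstacle is the martingale representation step: one must justify the enlargement of the stochastic basis, handle degeneracy/rank-deficiency of $\sigma$ via the pseudoinverse, and ensure that the compact-support / localisation arguments that reduce integrability condition (i) of Definition \ref{SDE.definition.martingaleSolution} to genuine (not merely local) martingale statements against test functions $\varphi\in C_c^\infty(\rd)$ go through. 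Everything else is bookkeeping and can be read off the cited results in \cite{karatzas91}.
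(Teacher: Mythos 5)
Your proposal is correct and reproduces the classical argument that the paper itself does not spell out but merely delegates to \cite[Proposition 2.2.3]{grube2022thesis} and to \cite[Proposition 4.11, Problem 4.13, Corollaries 4.8--4.9]{karatzas91}: It\^o plus the compact-support/integrability condition for the forward direction, and martingale representation on an enlarged basis (with the Moore--Penrose pseudoinverse to handle degenerate and rectangular $\sigma$) for the converse, with the uniqueness statement then following by matching laws in both directions. The only technical points worth flagging are the ones you already flag -- justifying the basis enlargement, the localisation argument for polynomial test functions, and that the pseudoinverse $\sigma(s,X(s))^+$ is jointly measurable -- so the proposal matches the approach the paper intends.
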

\begin{definition}[$P_\nu$-pathwise uniqueness]
We say that $P_\nu$-pathwise uniqueness holds for \eqref{SDE}, if for every two $P_\nu$-weak solutions $(X,W)$, $(Y,W)$ on a common stochastic basis $(\Omega,\FF,\PP;(\FF_t)_{t\in [0,T]})$ with a common standard $d_1$-dimensional $(\FF_t)$-Brownian motion $(W(t))_{t\in [0,T]}$, 
$$\text{$X(0)=Y(0)$ $\PP$-a.s. \text{implies} $X(t)=Y(t)$ for all $t\in [0,T]\ \PP$-a.s.}$$
\end{definition}
Let $\tilde{\EEEE}_{\nu}$ be the set of all maps $F_{\nu} : \rd\times C([0,T];\rd)_0 \to C([0,T];\rd)$ which are $\overline{\BBBB(\rd)\otimes \BBBB(C([0,T];\rd)_0)}^{{\nu}\otimes \PP^W}\slash\BBBB(\BBB)$-measurable.
Here $\overline{\BBBB(\rd)\otimes \BBBB(C([0,T];\rd)_0)}^{{\nu}\otimes \PP^W}$ denotes the completion of $\BBBB(\rd)\otimes \BBBB(C([0,T];\rd)_0)$ with respect to the measure ${\nu}\otimes \PP^W$.
\begin{definition}[$P_\nu$-strong solution]\label{SDE.definition.strongSolution}
The equation \eqref{SDE} has a $P_\nu$-strong solution, if there exists $F_{\nu}\in \tilde{\EEEE}_{\nu}$ such that for $\nu$-a.e. $x\in \rd$, $F_{\nu}(x,\cdot)$ is $\overline{\BBBB_t(C([0,T];\rd)_0)}^{\PP^W}\slash \BBBB_t(C([0,T];\rd))$-measurable for every $t\in[0,T]$, and for every standard $d_1$-dimensional $(\FF_t)$-Brownian motion $(W(t))_{t\in [0,T]}$ on a stochastic basis $(\Omega,\FF,\PP;(\FF_t)_{t\in [0,T]})$ and every $\FF_0\slash \BBBB(\rd)$-measurable function $\xi: \Omega \to \rd$, with $\PP\circ \xi\inv={\nu}$, one has that
	$(F_{\nu}(\xi,{W}),W)$ is a $P_{\nu}$-weak solution to \eqref{SDE} with $X(0)=\xi$ $\PP$-a.s.
	Here, $\overline{\BBBB_t(C([0,T];\rd)_0)}^{\PP^W}$ denotes the completion with respect to $\PP^W$ in $\BBBB(C([0,T];\rd)_0)$.
\end{definition}
\begin{definition}[unique $P_{\nu}$-strong solution]\label{SDE.definition.uniqStrongSolution}
	The equation \eqref{SDE} has a \textit{unique $P_{\nu}$-strong solution}, if there exists a function $F_{\nu}\in \tilde{\EEEE}_{\nu}$ satisfying the adaptedness condition in Definition \ref{SDE.definition.strongSolution} and if the following two conditions are satisfied.
	\begin{enumerate}[label=(\roman*)]
		\item \label{SDE.definition.uniqStrongSolution:1}
		For every standard $d_1$-dimensional $(\FF_t)$-Brownian motion $(W(t))_{t\in [0,T]}$ on a stochastic basis $(\Omega,\FF,\PP;(\FF_t)_{t\in [0,T]})$ and every $\FF_0\slash \BBBB(\rd)$-measurable $\xi:\Omega \to \rd$ with $\law{\xi}=\nu$, $(F_{\nu}(\xi,W),W)$ is a $P_{\nu}$-weak solution to \eqref{SDE}.
		\item \label{SDE.definition.uniqStrongSolution:2} For every $P_{\nu}$-weak solution $(X,W)$ to \eqref{SDE} on a stochastic basis $(\Omega,\FF,\PP;(\FF_t)_{t\in [0,T]})$ we have
		\begin{align*}
			X=F_{\nu}(X(0),W)\ \PP\text{-a.e.}
		\end{align*}
		\end{enumerate}
\end{definition}
\begin{remark}\label{SDE.definition.remark.uniqStrongSolutionImpliesWeakUniq}
Let $(X,W)$ be a $P_{\nu}$-weak solution to \eqref{SDE} on a stochastic basis $(\Omega,\FF,\PP;(\FF_t)_{t\in [0,T]})$.
	Since $X(0)$ and $W$ are $\PP$-independent, we have
	\begin{align*}
		\PP\circ(X(0),W)\inv=\nu\otimes \PP^W.
	\end{align*}
	In particular, the existence of a unique $P_{\nu}$-strong solution to \eqref{SDE} implies that $P_{\nu}$-weak uniqueness holds for \eqref{SDE}.
\end{remark}

\subsection{Solutions for \eqref{FPKE}}
In Section \ref{section.introduction}, we briefly introduced the meaning of a solution to \eqref{FPKE}.
In this section, we rigorously define our solution framework.

In general, the Fokker--Planck equation \eqref{FPKE} is understood as an equation for Radon measures on ${(0,T)\times \rd}$ in the Schwartz-distributional sense, see \cite[(6.1.2) and (6.1.3)]{bogachev2015FPKE}.
We will just consider solutions of the form $\mu = \mu_t\mathrm{d}t$. Solutions are then meant in the sense of Definition \ref{FPKE.definition.solution.measure}. Note that $\mu_t$, $t\in [0,T]$, is then only determined $\mathrm{d}t$-a.e. We will consider the case that $(\mu_t)_{t \in [0,T]}$ is a vaguely continuous curve of Radon measures on $\rd$. In fact, in many cases, the $\mathrm{d}t$-a.e. determined family of measures $(\mu_t)_{t\in [0,T]}$ has a unique vaguely continuous version, see \cite[Lemma 2.3]{rehmeier2022nonlinearFlow}. 
In view of the connection between \eqref{SDE} and \eqref{FPKE}, this consideration is very natural. Indeed, if $\mu_t= \law{X_t}$, $t\in [0,T]$, where $(X,W)$ is a weak solution to \eqref{SDE}, $(\mu_t)_{t \in [0,T]}$ is automatically narrowly continuous due to the continuity of $t\mapsto X(t)$.

The following definition is consistent with Definition \ref{FPKE.definition.solution.measure}, see Remark \ref{FPKE.definition.solution.measure:remark}.

\begin{definition}[solution to \eqref{FPKE}]\label{FPKE.definition.solution}
	Let $\nu \in \mathcal{M}_{\mathrm{loc}}(\rd)$. A vaguely continuous curve $(\mu_t)_{t\in [0,T]}\subseteq \mathcal{M}_{\mathrm{loc}}(\rd)$ is called a \textit{solution} to the Cauchy problem \eqref{FPKE} with $\left.\mu\right|_{t=0}=\nu$ if $t\mapsto |\mu_t|(B) \in L^1([0,T])$, for every Borel measurable precompact set $B\subset\rd$,
	 and
	\begin{enumerate}[label=(\roman*)]
		\item\label{SDE.definition.FPKEsolution.measure.cond1} $b \in L^1_{\mathrm{loc}}([0,T]\times\rd;\rd;\mu_t\mathrm{d}t)$, $a \in L^1_{\mathrm{loc}}([0,T]\times\rd;\RR^{d\times d};\mu_t\mathrm{d}t)$,
  \item\label{SDE.definition.FPKEsolution.measure.cond2} for each $t \in [0,T]$ and for each $\varphi \in C_c^\infty(\rd)$, we have
			\begin{align}\label{FPKE.definition.solution:1}
				\int_\rd \varphi(x)\ \mu_t(\mathrm{d}x) = \int_\rd \varphi(x)\ \nu(\mathrm{d}x) + \int_0^t \int_\rd 
				(L_s\varphi)(x)\
				\mu_s(\mathrm{d}x)\mathrm{d}s.
			\end{align}
	\end{enumerate}
Moreover, if for all $t\in [0,T]$, $\mu_t$ is absolutely continuous with respect to Lebesgue measure with Radon--Nikodym density $u_t\in L^1(\rd)$,
	then the family $(u_t)_{t\in [0,T]}$ is called an $L^1$-solution to \eqref{FPKE}.
\end{definition}
\begin{definition}[probability solution to \eqref{FPKE}]\label{SDE.definition.FPKEsolution.probability}
	A solution $(\mu_t)_{t\in [0,T]}$ to \eqref{FPKE} is called a probability solution to \eqref{FPKE} if $(\mu_t)_{t\in [0,T]} \subseteq \PPPP(\rd)$.
	Moreover, if  $(u_t)_{t\in [0,T]}$ is an $L^1$-solution to \eqref{FPKE} such that $u_t \in \PPPP_0(\rd)$ for all $t\in [0,T]$, then $(u_t)_{t\in [0,T]}$ is called a $\PPPP_0$-solution to \eqref{FPKE}.
\end{definition}

\section{The procedure}\label{section.procedure}
\subsection*{The restricted Yamada--Watanabe theorem}\label{section.restrictedYamadaWatanabeTheorem}
We recall the restricted Yamada--Watanabe theorem from \cite{grube2021strong} (see \cite{grube2022thesis} for a general version formulated in the variational framework for SPDEs), which is a modification of the original Yamada--Watanabe theorem. In particular, it has the advantage that one can conclude the existence of a probabilistically strong solution to \eqref{SDE} under a relaxed pathwise uniqueness condition compared to the original Yamada--Watanabe theorem.

The following theorem is a variant of \cite[Theorem 3.3]{grube2021strong}. For the most general formulation, we refer to \cite[Theorem 1.3.1]{grube2022thesis}. 
\begin{theorem}
\label{theorem.restrictedYamadaWatanabe}
Let $\nu \in \PPPP(\rd)$ and let $P_\nu$ as in \eqref{solutions:setOfMeasuresOnPathSpace}. Suppose $(\mu_t)_{t\in [0,T]}$ is a probability solution to \eqref{FPKE} such that $\left.\mu\right|_{t=0}=\nu$.
Then the following statements regarding \eqref{SDE} are equivalent. 
\begin{enumerate}[label=(\roman*)]
	\item There exists a $P_\nu$-weak solution and $P_\nu$-pathwise uniqueness holds.
	\item There exists a unique $P_\nu$-strong solution to \eqref{SDE}.
\end{enumerate}
\end{theorem}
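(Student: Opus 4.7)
The direction (ii) $\Rightarrow$ (i) is the easy one and I would handle it first. If there is a unique $P_\nu$-strong solution with associated function $F_\nu$, then picking any stochastic basis carrying a Brownian motion $W$ and an $\FF_0$-measurable $\xi$ with $\law{\xi}=\nu$ produces a $P_\nu$-weak solution $(F_\nu(\xi,W),W)$ by Definition \ref{SDE.definition.strongSolution}, settling existence. For pathwise uniqueness, given two $P_\nu$-weak solutions $(X,W)$ and $(Y,W)$ on a common basis with $X(0)=Y(0)$ $\PP$-a.s., Definition \ref{SDE.definition.uniqStrongSolution}\ref{SDE.definition.uniqStrongSolution:2} forces $X=F_\nu(X(0),W)=F_\nu(Y(0),W)=Y$ $\PP$-a.s.

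For the substantial direction (i) $\Rightarrow$ (ii), the plan is to run a restricted Yamada--Watanabe disintegration argument. Fix a $P_\nu$-weak solution $(X,W)$ on some $(\Omega,\FF,\PP;(\FF_t))$ and consider the joint law $Q := \PP \circ (X(0),X,W)^{-1}$ on $\rd \times C([0,T];\rd) \times C([0,T];\rd)_0$. Disintegrate $Q$ with respect to the $\rd \times C([0,T];\rd)_0$ coordinate: since $X(0)$ and $W$ are $\PP$-independent with laws $\nu$ and $\PP^W$ (cf.\ Remark \ref{SDE.definition.remark.uniqStrongSolutionImpliesWeakUniq}), we get a regular conditional kernel $Q_{x,w}(\mathrm{d}\eta)$ on $C([0,T];\rd)$ satisfying $Q = \nu \otimes \PP^W \otimes Q_{(\cdot,\cdot)}$. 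On the enlarged space $\bar\Omega := \rd \times C([0,T];\rd)_0 \times C([0,T];\rd) \times C([0,T];\rd)$ equipped with the measure $\bar\PP := (\nu \otimes \PP^W)(\mathrm{d}x,\mathrm{d}w) \otimes Q_{x,w}(\mathrm{d}\eta_1) \otimes Q_{x,w}(\mathrm{d}\eta_2)$, construct two candidate solutions $\bar X^i(\cdot) := \eta_i(\cdot)$, $i=1,2$, driven by the common Brownian motion $\bar W := w$ started at $\bar\xi := x$, with respect to the usual augmentation of the natural filtration. The standard Stroock--Varadhan / Kurtz lifting argument, combined with Proposition \ref{SDE.proposition.martingaleSolutionEQweaksolution}, shows that each $(\bar X^i, \bar W)$ is a weak solution to \eqref{SDE} on $(\bar\Omega,\bar\PP)$; the marginal constraint $\bar\PP \circ (\bar X^i)^{-1} = \PP \circ X^{-1} \in P_\nu$ ensures both lifts remain in the restricted class $P_\nu$.

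Now $P_\nu$-pathwise uniqueness applies on $\bar\Omega$ and forces $\bar X^1 = \bar X^2$ $\bar\PP$-a.s., which means that for $\nu \otimes \PP^W$-a.e.\ $(x,w)$ the kernel $Q_{x,w}$ is concentrated on a single path; denote that path by $F_\nu(x,w)$. By a standard measurable selection argument (e.g.\ the disintegration giving a jointly measurable Dirac-valued kernel), $F_\nu \in \tilde\EEEE_\nu$. To verify the $(\FF_t)$-adaptedness required in Definition \ref{SDE.definition.strongSolution}, I would repeat the disintegration above but condition on $(X(0),W_{\cdot \wedge t})$ for each $t \in [0,T]$, obtaining that the section $F_\nu(x,\cdot)|_{[0,t]}$ depends only on $w|_{[0,t]}$ up to $\PP^W$-null sets, giving the required progressive measurability. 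Finally, to upgrade to a unique $P_\nu$-strong solution as in Definition \ref{SDE.definition.uniqStrongSolution}, note that any other $P_\nu$-weak solution $(Y,W')$ can be lifted into the same disintegration scheme against its own initial law/BM pair, and the same argument forces $Y = F_\nu(Y(0),W')$ $\PP$-a.s.

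The main obstacle, in my view, is the measurability bookkeeping: verifying that $(\bar X^i,\bar W)$ is genuinely a weak solution on $\bar\Omega$ (which needs the martingale-problem characterisation to transfer through the disintegration), and checking that the selected $F_\nu$ is adapted to the $\PP^W$-augmentation of $\BBBB_t(C([0,T];\rd)_0)$ for each $t$ rather than merely to the full $\sigma$-algebra. The probability solution $(\mu_t)_{t\in[0,T]}$ to \eqref{FPKE} enters through fixing the canonical time-marginals of any $P_\nu$-weak solution via $\law{X(t)} = \mu_t$, which in particular guarantees the integrability in Definition \ref{SDE.SDE.definition.weakSolution}(i) survives the disintegration and that the martingale identity can be evaluated under the lifted measure $\bar\PP$.
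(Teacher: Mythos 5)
The paper does not actually prove Theorem \ref{theorem.restrictedYamadaWatanabe}: it is presented as a recalled result, with the proof deferred to \cite[Theorem 3.3]{grube2021strong} and \cite[Theorem 1.3.1]{grube2022thesis}. So strictly there is no in-text proof to compare against. That said, your proposal follows the same route as those references, namely the classical Ikeda--Watanabe / Karatzas--Shreve disintegration argument adapted to the restricted class. The pivotal observation, which you correctly identify, is that $P_\nu$ is a class of laws on path space, so $\bar\PP\circ(\bar X^i)\inv = \PP\circ X\inv \in P_\nu$ by construction, and hence both lifted processes are themselves $P_\nu$-weak solutions; this is precisely what lets the hypothesis of \emph{restricted} pathwise uniqueness bite on $\bar\Omega$ and is the only place where the proof deviates from the unrestricted Yamada--Watanabe argument. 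The direction (ii) $\Rightarrow$ (i) is handled correctly, and the remaining obstacles you flag (the conditional-independence step needed to verify that $\bar W$ stays an $(\bar\FF_t)$-Brownian motion on the enlarged space, and the progressive measurability of the selected $F_\nu$) are exactly the technical points the cited proofs spend their effort on.

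One misstatement worth correcting: the FPKE probability solution $(\mu_t)_{t\in[0,T]}$ does not ``fix the canonical time-marginals of any $P_\nu$-weak solution.'' Membership in $P_\nu$ as defined in \eqref{solutions:setOfMeasuresOnPathSpace} constrains only the time-zero marginal, so a $P_\nu$-weak solution need not satisfy $\law{X(t)}=\mu_t$ for $t>0$ (that would require the further restriction $P_\nu\subseteq P^{(\mu_t)}$, as in Theorem \ref{SDE.mainresult.abstract}). Fortunately this does not damage your argument: the integrability in Definition \ref{SDE.SDE.definition.weakSolution}(i) transfers to $\bar\Omega$ simply because $\law{\bar X^i(t)}=\law{X(t)}$ whatever that law is, and the martingale identity passes through the disintegration for the same reason. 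The hypothesis on $(\mu_t)$ should rather be understood as a compatibility condition inherited from the framework of the cited references and used downstream (e.g.\ in Theorem \ref{SDE.mainresult.abstract}), not as a working ingredient of the equivalence itself. Finally, in verifying Definition \ref{SDE.definition.uniqStrongSolution}\ref{SDE.definition.uniqStrongSolution:2} for a second $P_\nu$-weak solution $(Y,W')$, you should disintegrate $X$ and $Y$ jointly and apply restricted pathwise uniqueness to the resulting coupled lift, which identifies the two conditional kernels and hence the two selections; phrasing it as ``lifted into the same disintegration scheme'' is a little terse but points at the right construction.
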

\subsection*{A recent superposition principle}
The superposition principle is a tool to 'lift' a probability solution $(\mu_t)_{t\in [0,T]}$ to \eqref{FPKE} to a solution $Q \in \PPPP(C([0,T];\rd)$ to the corresponding martingale problem or, equivalently, to a probabilistically weak solution $(X,W)$ to \eqref{SDE} in such a way that for all $t\in [0,T]$
\begin{align*}
	Q\circ \pi_t\inv = \mu_t\ \ \  \text{or}\ \ \  \law{X(t)} = \mu_t \text{, respectively.}
\end{align*}
The most up-to-date superposition principle in the stochastic case is due to Bogachev, R\"ockner, and Schaposhnikov \cite{bogachev2021super}. The precise statement is as follows.
\begin{theorem}[{\text{\cite[Theorem 1.1]{bogachev2021super}}}]\label{SDE.theorem.superpositionRoe}
	Let $\nu \in \PPPP(\rd)$. Assume that $(\mu_t)_{t\in [0,T]}$ is a probability solution to \eqref{FPKE} with $\left.\mu\right|_{t=0}=\nu$ with respect to the Kolmogorov operator $L(a,b)$, such that
	\begin{align}\label{SDE.superposition.conditionRoe}\tag{SP}
		\int_0^T \int_\rd \frac{|\scalarproduct[\rd]{b(t,x)}{x}|+|a(t,x)|}{1+|x|^2}\ \mu_t(\mathrm{d}x)\mathrm{d}t <+\infty.
\end{align}
	Then there exists a solution $Q \in \PPPP(C([0,T];\rd))$ to the martingale problem for the operator $L(a,b)$ starting in $\nu$ such that $Q\circ\pi_t\inv = \mu_t$ for all $t\in [0,T]$.
\end{theorem}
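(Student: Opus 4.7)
This is the main theorem of \cite{bogachev2021super}, which extends the Ambrosio--Figalli--Trevisan superposition principle to unbounded, only locally integrable coefficients subject to the mild condition \eqref{SDE.superposition.conditionRoe}. My plan is to construct $Q$ as a weak accumulation point of martingale solutions associated with regularised, non-degenerate problems whose time-marginals approximate $(\mu_t)_{t\in [0,T]}$ exactly, and then to identify the limit as a solution of the original martingale problem.

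The first step would be regularisation: let $\rho_\epsilon$ be a standard mollifier on $\rd$, set $\mu_t^\epsilon:=\mu_t*\rho_\epsilon$, and form smooth, strictly elliptic coefficients $a^\epsilon,b^\epsilon$ for which $(\mu_t^\epsilon)_{t\in [0,T]}$ solves the Fokker--Planck equation for $L(a^\epsilon,b^\epsilon)$ exactly; a convenient choice is $a^\epsilon := (a\mu_t)*\rho_\epsilon/\mu_t^\epsilon + \epsilon I$ and $b^\epsilon := (b\mu_t)*\rho_\epsilon/\mu_t^\epsilon$, which eliminates any commutator remainder. Classical theory for smooth, non-degenerate SDEs then yields a unique martingale solution $Q^\epsilon\in \PPPP(C([0,T];\rd))$ with $Q^\epsilon\circ\pi_t\inv = \mu_t^\epsilon$ for every $t\in [0,T]$. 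The second step is tightness of $\{Q^\epsilon\}_{\epsilon>0}$: testing the regularised equation against the Lyapunov function $\log(1+|x|^2)$ and invoking \eqref{SDE.superposition.conditionRoe} controls $Q^\epsilon(|\pi_t|\geq R)$ uniformly in $\epsilon$, while It\^o's formula combined with the Burkholder--Davis--Gundy inequality yields a uniform modulus-of-continuity estimate; Aldous's criterion then extracts a weakly convergent subsequence $Q^\epsilon\to Q$. Vague continuity of $(\mu_t)$ transfers to the marginals and gives $Q\circ \pi_t\inv=\mu_t$ for every $t\in [0,T]$.

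The main obstacle is the final step: verifying that $Q$ itself solves the martingale problem for the original operator $L(a,b)$. For $\varphi\in C_c^\infty(\rd)$ and $s<t$, one must show that $\varphi(\pi_t)-\varphi(\pi_s)-\int_s^t (L_r\varphi)(\pi_r)\,\mathrm{d}r$ has vanishing conditional $Q$-expectation given $\FF_s$, even though $a,b$ are only locally $\mu_t\mathrm{d}t$-integrable and may be both unbounded and discontinuous, so that a direct weak-convergence-of-continuous-functionals argument is unavailable. The decisive trick in \cite{bogachev2021super} is to exploit the marginal identity $Q^\epsilon\circ\pi_r\inv=\mu_r^\epsilon$ to rewrite the relevant unconditional expectations as deterministic integrals $\int_\rd (L_r^\epsilon\varphi)(x)\,\mu_r^\epsilon(\mathrm{d}x)$, whose convergence to $\int_\rd (L_r\varphi)(x)\,\mu_r(\mathrm{d}x)$ follows from standard mollifier estimates and the hypothesised local $L^1(\mu_t\mathrm{d}t)$-bounds on $a,b$. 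A localisation by stopping times $\tau_R:=\inf\{t:|\pi_t|\geq R\}$, together with an $R\to\infty$ limit controlled by \eqref{SDE.superposition.conditionRoe}, closes the argument on the conditional level and finishes the identification of $Q$ as the desired martingale solution.
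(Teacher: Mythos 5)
This theorem is quoted from \cite{bogachev2021super} (as the bracketed citation in its header indicates), and the paper gives no proof of it — it is used as a black box, together with the restricted Yamada--Watanabe theorem, inside the proof of Theorem \ref{SDE.mainresult.abstract}. So there is no internal argument to compare your sketch against; what you have written is an attempt to reconstruct the external proof.

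On its own merits, your outline captures the general shape of the Ambrosio--Figalli--Trevisan--type argument: mollify to a smooth non-degenerate problem whose time-marginals track $(\mu_t)$, prove tightness via the Lyapunov function $\log(1+|x|^2)$ together with \eqref{SDE.superposition.conditionRoe}, and pass to the limit. Two points are, however, wrong or incomplete as written. First, with your choice $a^\epsilon := (a\mu_t)\ast\rho_\epsilon/\mu_t^\epsilon + \epsilon I$ the curve $(\mu_t^\epsilon)$ does \emph{not} exactly solve the FPE for $L(a^\epsilon,b^\epsilon)$: adding $\epsilon I$ produces an extra $\epsilon\Delta\mu_t^\epsilon$ term, so the claim that the construction ``eliminates any commutator remainder'' fails; one must either carry a remainder or build the regularisation so that it absorbs the extra Laplacian (e.g.\ smoothing by the heat semigroup at time $\epsilon$ rather than by a generic mollifier plus $\epsilon I$). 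Second, the marginal identity $Q^\epsilon\circ\pi_r\inv=\mu_r^\epsilon$ only converts \emph{unconditional} expectations into deterministic space integrals. To show that the limit $Q$ solves the martingale problem one must verify, for bounded $\FF_s$-measurable $g$, that $\E^{Q^\epsilon}\bigl[\bigl(\varphi(\pi_t)-\varphi(\pi_s)-\int_s^t L^\epsilon_r\varphi(\pi_r)\,\mathrm{d}r\bigr)g\bigr]\to 0$, and the term $\E^{Q^\epsilon}\bigl[\int_s^t L^\epsilon_r\varphi(\pi_r)\,g\,\mathrm{d}r\bigr]$ is \emph{not} of the form $\int_s^t\int L^\epsilon_r\varphi\,\mathrm{d}\mu^\epsilon_r\,\mathrm{d}r$ because of the path-dependent factor $g$. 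Establishing this stability of the conditional martingale property under coefficients that are only locally $L^1(\mu_t\mathrm{d}t)$ is precisely the nontrivial technical content of \cite{bogachev2021super}, and your sketch does not supply it.
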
	
We also refer to the recent work by R\"ockner, Xie, and Zhang, which relates solutions to non-local FPKEs with solutions to SDEs with jumps \cite{roeckner2020levy}.
For results in infinite dimensions, we refer to Trevisan's \cite{trevisan2014thesis} and Dieckmann's \cite{dieckmann2020thesis} PhD theses.
\subsection*{From a probability solution to \eqref{FPKE} to a probabilisitically strong solution to \eqref{SDE}}
\label{section.SDE.abtractMainResult}
The superposition principle can directly be combined with the restricted Yamada--Watanabe theorem in order to obtain the following theorem.
\begin{theorem}\label{SDE.mainresult.abstract}
Let $b,\sigma$ as in \eqref{SDE.SDEcoefficients}, and  $a:={\sigma\sigma^*\slash 2}$. Let $\nu \in \PPPP(\rd)$ and $P_\nu$ as in \eqref{solutions:setOfMeasuresOnPathSpace}. Assume that $(\mu_t)_{t\in [0,T]}$ is a probability solution to \eqref{FPKE} with $\left.\mu\right|_{t=0}=\nu$ with respect to the Kolmogorov operator $L(a,b)$. Assume that \eqref{SDE.superposition.conditionRoe} holds and that 
\begin{align*}
	P^{(\mu_t)} := \{Q \in \PPPP(C([0,T];\rd)) : Q \circ \pi_t\inv = \mu_t \ \forall t\in [0,T]\}\subseteq P_{\nu}.
\end{align*}
Then the following statements are equivalent. 
\begin{enumerate}[label=(\roman*)]
	\item $P_{\nu}$-pathwise uniqueness holds for \eqref{SDE}.
	\item There exists a unique $P_{\nu}$-strong solution to \eqref{SDE}.
\end{enumerate}
\end{theorem}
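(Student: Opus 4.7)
The plan is to reduce Theorem \ref{SDE.mainresult.abstract} to a direct application of the restricted Yamada--Watanabe theorem (Theorem \ref{theorem.restrictedYamadaWatanabe}), using the superposition principle (Theorem \ref{SDE.theorem.superpositionRoe}) to manufacture the $P_\nu$-weak solution whose existence Theorem \ref{theorem.restrictedYamadaWatanabe} requires. The hypothesis $P^{(\mu_t)}\subseteq P_\nu$ is precisely the bridge that ensures the lifted solution lies in the correct class.

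For the implication (i) $\Rightarrow$ (ii), I would first invoke Theorem \ref{SDE.theorem.superpositionRoe}: since $(\mu_t)_{t\in[0,T]}$ is a probability solution to \eqref{FPKE} with $\left.\mu\right|_{t=0}=\nu$ with respect to $L(a,b)$ and \eqref{SDE.superposition.conditionRoe} is assumed, there exists a solution $Q\in\PPPP(C([0,T];\rd))$ to the martingale problem for $L(a,b)$ starting at $\nu$ such that $Q\circ\pi_t^{-1}=\mu_t$ for every $t\in[0,T]$. Applying Proposition \ref{SDE.proposition.martingaleSolutionEQweaksolution}, I obtain a probabilistically weak solution $(X,W)$ to \eqref{SDE} on some stochastic basis with $\law{X}=Q$, and hence $\law{X(t)}=\mu_t$ for every $t\in[0,T]$. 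Thus $\law{X}\in P^{(\mu_t)}\subseteq P_\nu$, so $(X,W)$ is in fact a $P_\nu$-weak solution. Together with assumption (i), namely $P_\nu$-pathwise uniqueness, the hypotheses of Theorem \ref{theorem.restrictedYamadaWatanabe} are satisfied, and this theorem delivers a unique $P_\nu$-strong solution to \eqref{SDE}, which is (ii).

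The reverse implication (ii) $\Rightarrow$ (i) is immediate from Theorem \ref{theorem.restrictedYamadaWatanabe} applied in the opposite direction: if there exists a unique $P_\nu$-strong solution, then there exists a $P_\nu$-weak solution and $P_\nu$-pathwise uniqueness holds, and in particular the latter conclusion is exactly (i). Note that the superposition principle is not needed for this direction, since Theorem \ref{theorem.restrictedYamadaWatanabe} alone supplies the required equivalence once a probability solution to \eqref{FPKE} with initial datum $\nu$ is available, which is guaranteed by assumption.

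There is no genuine technical obstacle; the proof is essentially a careful concatenation of three existing results. The only point that needs verification is that the weak solution produced by the superposition step lies in $P_\nu$, and this is guaranteed by the assumption $P^{(\mu_t)}\subseteq P_\nu$, together with the fact that the marginals of $\law{X}$ coincide with $(\mu_t)_{t\in[0,T]}$ and $\law{X(0)}=\mu_0=\nu$. Hence no new estimate or construction is required beyond invoking Theorems \ref{theorem.restrictedYamadaWatanabe} and \ref{SDE.theorem.superpositionRoe} together with Proposition \ref{SDE.proposition.martingaleSolutionEQweaksolution} in the order above.
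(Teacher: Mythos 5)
Your proof is correct and follows essentially the same route as the paper: invoke the superposition principle (Theorem \ref{SDE.theorem.superpositionRoe}) together with Proposition \ref{SDE.proposition.martingaleSolutionEQweaksolution} to produce a $P^{(\mu_t)}$-weak, hence $P_\nu$-weak, solution, and then apply the restricted Yamada--Watanabe theorem (Theorem \ref{theorem.restrictedYamadaWatanabe}); the converse is treated as immediate in both. You merely make explicit the role of the inclusion $P^{(\mu_t)}\subseteq P_\nu$, which the paper leaves implicit.
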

\begin{proof}
We only prove '(i)$\implies$ (ii)'; the other direction is obvious.
Combining Theorem \ref{SDE.theorem.superpositionRoe} and Proposition \ref{SDE.proposition.martingaleSolutionEQweaksolution}, 
we can construct a $P^{(\mu_t)}$-weak solution to \eqref{SDE}.
Therefore, the assertion follows directly from Theorem \ref{theorem.restrictedYamadaWatanabe}.
\end{proof}

\section{Main results}\label{section.mainResults}
We will provide \textit{two types} of applications of Theorem \ref{SDE.mainresult.abstract}.
First, we will utilise it to show the existence of probabilistically strong solutions to \eqref{SDE} for large classes of coefficients and initial distributions.
Second, we will extend the restricted uniqueness result for \eqref{FPKE} from \cite{roeckner2010weakuniqueness} to unbounded coefficients $b,a$ utilising (new) restricted pathwise uniqueness results discussed in Section \ref{section.SDE.pathwiseUniqueness}.
 \subsection{Probabilistically strong solutions to degenerate SDEs}\label{section.mainResults.strongSolutionsSDE}
Relying on the existence results for \eqref{FPKE} from \cite{lebris2008existence,figalli2008existence,bogachev2015FPKE} and the restricted pathwise uniqueness results from Section \ref{section.SDE.pathwiseUniqueness}, we are able to provide large classes of coefficients $b,\sigma$ for which we can prove the existence of probabilistically strong solution to \eqref{SDE} via Theorem \ref{SDE.mainresult.abstract}. To the best of our knowledge, these results are new. The proofs can be found in Section \ref{section.SDE.applications.SDE}.
\vspace{1em}

If $b^i, a^{ij}:=(\sigma\sigma^*)^{ij}\slash 2\in L^1_{\mathrm{loc}}([0,T]\times\rd)$, $i,j \in \{1,...,d\}$, we define the following Schwartz distribution $\beta$ through
\begin{align}\label{SDE.results:beta}
	\beta^i := b^i-\sum_{j=1}^d \partial_{x_j}a^{ij},\ \ i \in \{1,...,d\}.
\end{align}
Furthermore, we introduce the following notation.
Let $p'\in [1,\infty]$ and $\nu \in \PPPP(\rd)$. We define $P^{p'}_\nu$ and $P^{p' , \mathrm{loc}}_\nu$ via
\begin{align*}
	P^{p' (, \mathrm{loc})}_\nu:=\left\{ Q \in \PPPP(C([0,T];\rd)) :\  \exists \rho\in L^{p'}_{(\mathrm{loc})}([0,T]\times\rd) \text{ such that }\right.
	\\\left.Q \circ \pi_t\inv = \rho(t,x)\mathrm{d}x \text{ for a.e. } t\in [0,T], Q\circ\pi_0\inv=\nu  \vphantom{L^{p'}_{(\mathrm{loc})}([0,T]\times\rd)}\right\}.
\end{align*}
In the following, we will always assume that $P^{p'}_\nu\neq \emptyset$ or $P^{p' , \mathrm{loc}}_\nu \neq \emptyset$, respectively. This might, for example, implicitly put restrictions on the choice of $\nu$. Note that in the case $P^{\infty}_{\nu}\neq \emptyset$, $\nu$ needs to have a bounded density with respect to Lebesgue measure. Indeed, if $Q\in P^{\infty}_{\nu}$ the narrow continuity of the curve of measures $(Q\circ\pi_t\inv)_{t\in [0,T]}$ yields that for all $\varphi \in C_c^\infty(\rd)$
\begin{align*}
	\left|\int_\rd \varphi\ \mathrm{d}\nu\right| \leq \norm[L^1(\rd)]{\varphi}
	\norm[{L^\infty([0,T]\times\rd)}]{\rho}.
\end{align*}
Hence, the linear functional $C_c^\infty(\rd)\ni\varphi \mapsto \int_\rd \varphi \ \mathrm{d}\nu$ can be uniquely extended to a bounded linear functional on $L^1(\rd)$ and is therefore represented by a function in $L^\infty(\rd)$. A similar argument yields that in the same case $Q\circ \pi_t\inv$ is absolutely continuous with respect to Lebesgue measure for \textit{all} $t\in [0,T]$ and $\frac{\mathrm{d}(Q \circ \pi_t\inv)}{\mathrm{d}x}\in L^\infty(\rd)$ for \textit{all} $t\in [0,T]$.

Our main results are the following. 
In the case of bounded coefficients, we obtain the following result based on \cite{figalli2008existence}.
\begin{theorem}[main result I.a]\label{SDE.application.SDE.theorem.figalli}
	Let $b\in L^\infty([0,T]\times\rd;\rd), \sigma \in L^\infty([0,T]\times\rd;\RR^{d\times d_1})$ such that $|D_x b| \in L^1_{\mathrm{loc}}([0,T]\times\rd)$, $|D_x \sigma| \in L^2_{\mathrm{loc}}([0,T]\times\rd)$, and
	$\divv \beta \in L^1_{\mathrm{loc}}([0,T]\times\rd)$ 
	with 
	$(\divv \beta)^- \in L^1([0,T];L^\infty(\rd))$.
Let $v\in (\PPPP_0\cap L^\infty)(\rd)$. 
Then there exists a unique $P^{\infty,\mathrm{loc}}_{v}$-strong solution to \eqref{SDE}.
\end{theorem}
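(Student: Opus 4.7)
The plan is to apply Theorem \ref{SDE.mainresult.abstract} with $\nu = v$ and $P_\nu = P^{\infty,\mathrm{loc}}_v$. This reduces the task to three subgoals: (a) produce a probability solution $(\mu_t)_{t\in[0,T]}$ to \eqref{FPKE} with $\mu|_{t=0}=v$ whose time-marginals admit a Lebesgue density that is uniformly bounded on $[0,T]\times\rd$; (b) verify the superposition condition \eqref{SDE.superposition.conditionRoe}; and (c) establish $P^{\infty,\mathrm{loc}}_v$-pathwise uniqueness for \eqref{SDE}. Once these are in place, Theorem \ref{SDE.mainresult.abstract} directly yields the existence of a unique $P^{\infty,\mathrm{loc}}_v$-strong solution.

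For (a), I would use the existence theory for \eqref{FPKE} with bounded coefficients from \cite{figalli2008existence} (or the parabolic construction of \cite[Chapter 6]{bogachev2015FPKE}), combined with a vanishing-viscosity / mollification approximation. Writing $L^*_t$ in divergence form via \eqref{SDE.results:beta},
\begin{align*}
\partial_t u_t = \sum_{i,j=1}^d \partial_{x_i}\!\bigl(a^{ij}\partial_{x_j} u_t\bigr) - \divv(\beta\, u_t),
\end{align*}
a comparison-principle computation applied to the smooth approximants yields the a priori bound
\begin{align*}
\sup_{t\in[0,T]}\norm[L^\infty(\rd)]{u_t} \leq \norm[L^\infty(\rd)]{v}\,\exp\!\left(\int_0^T \norm[L^\infty(\rd)]{(\divv\beta(s,\cdot))^-}\, \mathrm{d}s\right),
\end{align*}
which is exactly where the hypothesis $(\divv\beta)^- \in L^1([0,T];L^\infty(\rd))$ is consumed. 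Passing to the limit and using stability of the $L^\infty$-bound under weak-$\ast$ limits, one obtains a probability solution with density in $L^\infty([0,T]\times\rd)$, so $P^{(\mu_t)} \subseteq P^{\infty}_v \subseteq P^{\infty,\mathrm{loc}}_v$. Condition (b) is immediate: because $b$ and $a=\sigma\sigma^\ast\slash 2$ are uniformly bounded, the integrand $(|\scalarproduct[\rd]{b(t,x)}{x}|+|a(t,x)|)\slash(1+|x|^2)$ is bounded on $[0,T]\times\rd$, so \eqref{SDE.superposition.conditionRoe} holds against any probability curve.

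For (c) I would invoke the restricted pathwise uniqueness theorem of Section \ref{section.SDE.pathwiseUniqueness}, which, as advertised in the introduction, provides pathwise uniqueness among weak solutions whose time-marginals have uniformly bounded Lebesgue density, under the Sobolev hypotheses $b\in L^1_t(W^{1,1}_{x,\mathrm{loc}})$ and $\sigma\in L^2_t(W^{1,2}_{x,\mathrm{loc}})$. Both follow from the assumptions: $b$ is bounded with $|D_x b|\in L^1_{\mathrm{loc}}([0,T]\times\rd)$, and by Fubini this gives $b\in L^1([0,T];W^{1,1}_{\mathrm{loc}}(\rd))$; the analogous $L^2_t$ statement for $\sigma$ follows from $\sigma\in L^\infty$ and $|D_x\sigma|\in L^2_{\mathrm{loc}}$.

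The main technical obstacle I foresee lies in (a), namely passing the comparison-principle bound through the regularisation limit, since $\beta$ is only a distribution a priori and the bound on $(\divv\beta)^-$ is merely integrable in time into $L^\infty_x$; however, the estimate is uniform in the mollification parameter and $L^\infty$-balls are weakly-$\ast$ closed, so a standard diagonal argument yields the required narrowly continuous limit curve with bounded density. Verification of (b) is trivial under the boundedness hypothesis, and (c) is reduced to the dedicated result of Section \ref{section.SDE.pathwiseUniqueness}, so the proof is complete modulo these three ingredients.
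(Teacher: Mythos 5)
Your proposal reproduces the paper's proof structure: existence of a $\PPPP_0$-solution to \eqref{FPKE} from \cite{figalli2008existence}, plus pathwise uniqueness from Section~\ref{section.SDE.pathwiseUniqueness} (with $p=1$, $p'=\infty$), fed into Theorem~\ref{SDE.mainresult.abstract}. Parts (b) and (c) are handled exactly as in the paper; note only that Theorem~\ref{SDE.PU.theorem.mainresult} actually delivers $P^{\infty,\mathrm{loc}}_v$-pathwise uniqueness, i.e.\ uniqueness among solutions with \emph{locally} bounded time-marginal densities, which is what the conclusion requires and slightly more than the ``uniformly bounded'' phrasing you lift from the introduction.

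The one gap is in step (a): you assert that the vanishing-viscosity limit ``is a probability solution'', but this requires a separate argument. The $L^\infty$ bound and nonnegativity do pass to the weak-$\ast$ limit, but the normalisation $\int_\rd u_t\,\mathrm{d}x = 1$ for all $t\in[0,T]$ does not follow from weak-$\ast$ convergence in $L^\infty$ alone, since $\mathbbm{1}_{\rd}\notin L^1(\rd)$. The paper handles this by invoking a conservation-of-mass result (Proposition~\ref{FPKE.solution.massconservation}), applicable here because $b^i,a^{ij}\in L^1([0,T]\times\rd;u_t\mathrm{d}t)$ (trivially true with bounded coefficients and $u\in L^\infty([0,T];L^1(\rd))$) and because the curve admits a narrowly continuous version. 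Inserting that step closes your argument and brings it into exact agreement with the paper's.
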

In the case of unbounded coefficients we obtain the following two results.
The first one is based on \cite{lebris2008existence} and the second one on \cite{bogachev2015FPKE}.
\begin{theorem}[main result I.b]\label{SDE.application.SDE.theorem.lebrislion}
Let $b \in L^1([0,T]\times\rd;\rd), \sigma \in L^2([0,T]\times\rd;\RR^{d\times d_1})$ such that $|D_x b| \in L^1_{\mathrm{loc}}([0,T]\times\rd), |D_x \sigma| \in L^2_{\mathrm{loc}}([0,T]\times\rd)$, $\divv \beta \in L^1([0,T];L^\infty(\rd))$		
and for each $i \in \{1,...,d\}$
		\begin{align*}
		\sum_{j=1}^d\partial_{j} (\sigma\sigma^*)^{ij} \in L^1([0,T];W^{1,1}_{\mathrm{loc}}(\rd)), \frac{\sum_{j=1}^d\partial_{j} (\sigma\sigma^*)^{ij}}{1+|x|} \in\left(L^1([0,T];L^1(\rd))+ L^1([0,T];L^\infty(\rd))\right).
		\end{align*} 
		Let $v \in (\PPPP_0\cap L^\infty)(\rd)$.
		Then there exists a unique $P^{\infty}_{v}$-strong solution to \eqref{SDE}.
\end{theorem}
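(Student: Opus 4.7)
The plan is to apply Theorem \ref{SDE.mainresult.abstract} with $P_\nu := P^\infty_v$ and the Kolmogorov operator $L(a,b)$ for $a := \sigma\sigma^*/2$. Three ingredients must be assembled: (a) a probability solution $(\mu_t)_{t\in[0,T]}$ to \eqref{FPKE} with $\mu_0 = v$ whose time-marginals admit a uniformly bounded Lebesgue density; (b) the superposition integrability condition \eqref{SDE.superposition.conditionRoe}; (c) $P^\infty_v$-pathwise uniqueness for \eqref{SDE}. Combined, these will yield a unique $P^\infty_v$-strong solution.

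For step (a), I would invoke the Le Bris--Lions existence theory for Fokker--Planck equations with bounded density \cite{lebris2008existence}. Rewriting $L^*_t$ in transport-like form using the effective drift $\beta$ from \eqref{SDE.results:beta} and the diffusion vector fields built from $\sigma$ reveals why the hypotheses are tailored to their framework: the local Sobolev regularity of $b$ and $\sigma$ sets up the renormalisation argument, $\divv \beta \in L^1([0,T];L^\infty(\rd))$ controls the logarithmic Jacobian, and the linear-growth bound $\sum_j \partial_j a^{ij}/(1+|x|) \in L^1_t L^1_x + L^1_t L^\infty_x$ prevents escape of mass to infinity. Together these conditions produce an $L^\infty_{t,x}$-bounded density $u$ with $u_0 = v$. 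Since $v \in \PPPP_0(\rd)$ and the problem is mass-preserving under these growth assumptions, passing to the vaguely continuous version yields a narrowly continuous probability curve $\mu_t = u_t\,\mathrm{d}x$ solving \eqref{FPKE}; in particular, $P^{(\mu_t)} \subseteq P^\infty_v$.

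For step (b), using $|x|/(1+|x|^2) \leq 1/2$ and the density bound $C := \|u\|_{L^\infty([0,T]\times\rd)} < \infty$, one estimates
\begin{align*}
	\int_0^T\!\int_\rd \frac{|\scalarproduct[\rd]{b(t,x)}{x}| + |a(t,x)|}{1+|x|^2}\, \mu_t(\mathrm{d}x)\,\mathrm{d}t \;\leq\; C \int_0^T\!\int_\rd \left(\tfrac{1}{2}|b(t,x)| + |a(t,x)|\right)\mathrm{d}x\,\mathrm{d}t,
\end{align*}
which is finite since $b \in L^1([0,T]\times\rd;\rd)$ and $a \in L^1([0,T]\times\rd;\RR^{d\times d})$ (the latter because $\sigma \in L^2([0,T]\times\rd;\RR^{d\times d_1})$). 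Hence \eqref{SDE.superposition.conditionRoe} holds, and together with (a) all hypotheses of Theorem \ref{SDE.mainresult.abstract} are verified save for $P^\infty_v$-pathwise uniqueness.

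Step (c) is supplied by the restricted pathwise uniqueness result developed in Section \ref{section.SDE.pathwiseUniqueness} --- which is tailored precisely to weak solutions with uniformly bounded time-marginal densities --- and applies directly since $b \in L^1_t(W^{1,1}_{x,\mathrm{loc}})$ and $\sigma \in L^2_t(W^{1,2}_{x,\mathrm{loc}})$. Theorem \ref{SDE.mainresult.abstract} then closes the argument. The main obstacle is step (a): producing a genuinely $L^\infty_{t,x}$-bounded density solution to \eqref{FPKE} in this unbounded-coefficient setting. Here the detailed structural assumptions on $\sum_j \partial_j(\sigma\sigma^*)^{ij}$ --- local $W^{1,1}$ regularity plus the mixed $L^1_t L^1_x + L^1_t L^\infty_x$ linear growth of the weighted expression --- are precisely what the Le Bris--Lions renormalisation scheme and associated a priori estimates demand, so the work reduces to a careful matching of the present hypotheses with theirs.
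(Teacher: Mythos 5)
Your proposal matches the paper's own proof: existence of a uniformly bounded, mass-conserving $L^1$-solution to \eqref{FPKE} from the Le Bris--Lions result (the paper's Theorem \ref{SDE.application.FPKE.theorem.lebrislion}), restricted pathwise uniqueness from Theorem \ref{SDE.PU.theorem.mainresult} with $p=1$, $p'=\infty$, and assembly via Theorem \ref{SDE.mainresult.abstract}. The only difference is that you spell out the verification of the superposition condition \eqref{SDE.superposition.conditionRoe}, which the paper leaves implicit in its one-line proof; the argument is correct.
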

\begin{theorem}[main result I.b']\label{SDE.application.SDE.theorem.ams}
	Let $p,p' \in (1,\infty)$ such that $\nicefrac{1}{p}+\nicefrac{1}{p'}=1$. Let $b \in L^p([0,T]\times\rd;\rd), \sigma \in L^{2p}([0,T]\times\rd;\RR^{d\times d_1})$ with $\divv \beta \in L^1_{\mathrm{loc}}([0,T]\times\rd)$ and $(\divv \beta)^- \in L^1([0,T];L^\infty(\rd))$ such that, for every $R>0$,
	$\sup_{t\in [0,T]}\norm[{L^{2p}(B_R(0))}]{|D \sigma_t|} <\infty$	and such that there exists ${f_R \in L^p([0,T]\times B_R(0))}$ with
	\begin{align*}
		\scalarproduct[\rd]{x-y}{b(t,x)-b(t,y)}\leq \left(f_R(t,x) + f_R(t,y)\right)|x-y|^2,
	\end{align*}
	for a.e. $(t,x,y) \in [0,T]\times B_R(0)\times B_R(0)$.
		Let $v \in (\PPPP_0\cap L^{p'})(\rd)$.
		Then there exists a unique $P^{p'}_{v}$-strong solution to \eqref{SDE}.
\end{theorem}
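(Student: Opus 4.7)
The plan is to apply Theorem \ref{SDE.mainresult.abstract} with the choice $P_\nu := P^{p'}_v$. This requires four things: the existence of an $L^{p'}$--probability solution $(\mu_t)_{t\in[0,T]}$ to \eqref{FPKE} starting from $v$, verification of the superposition condition \eqref{SDE.superposition.conditionRoe} for this $(\mu_t)$, the inclusion $P^{(\mu_t)}\subseteq P^{p'}_v$, and finally $P^{p'}_v$--pathwise uniqueness for \eqref{SDE}.

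First, I would invoke a suitable existence result from \cite{bogachev2015FPKE} to produce a probability solution $(\mu_t)_{t\in[0,T]}$ with $\mu_t=u_t\,\mathrm{d}x$, where $u\in L^{p'}([0,T]\times\rd)$ and $u_0=v$. The integrability assumptions on $b,\sigma$ together with the bound $(\divv\beta)^-\in L^1([0,T];L^\infty(\rd))$ and $v\in L^{p'}(\rd)$ should deliver precisely an a priori estimate of $u$ in $L^{p'}_{t,x}$ (this is the role played by the $L^{p'}$--initial datum and the Meyers-type duality built into the Bogachev--Krylov--Röckner--Shaposhnikov framework).

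Second, I would verify \eqref{SDE.superposition.conditionRoe}. Since $|{\scalarproduct[\rd]{b(t,x)}{x}}|/(1+|x|^2)\leq \tfrac{1}{2}|b(t,x)|$ and $|a(t,x)|/(1+|x|^2)\leq |a(t,x)|\leq \tfrac{1}{2}|\sigma(t,x)|^2$, Hölder's inequality with conjugate exponents $p,p'$ gives
\begin{align*}
\int_0^T\!\!\int_\rd\!\frac{|{\scalarproduct[\rd]{b}{x}}|+|a|}{1+|x|^2}\,u_t\,\dx\,\ds
\leq C\left(\norm[{L^p([0,T]\times\rd)}]{b}+\norm[{L^{2p}([0,T]\times\rd)}]{\sigma}^2\right)\norm[{L^{p'}([0,T]\times\rd)}]{u}<\infty.
\end{align*}
The inclusion $P^{(\mu_t)}\subseteq P^{p'}_v$ then follows directly from the definition of $P^{p'}_v$, since any $Q\in P^{(\mu_t)}$ satisfies $Q\circ\pi_t^{-1}=u_t\,\mathrm{d}x$ with $u\in L^{p'}$ and $Q\circ\pi_0^{-1}=v$.

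The main obstacle, and the step I would address last, is establishing $P^{p'}_v$--pathwise uniqueness. For this I would appeal to the restricted pathwise uniqueness theorem developed in Section \ref{section.SDE.pathwiseUniqueness}, which generalises \cite{champagnat2018strong} by combining its techniques with those of \cite{roeckner2010weakuniqueness}. The key point is that the $L^\infty$--density assumption used in \cite{champagnat2018strong} and in Theorems \ref{SDE.application.SDE.theorem.figalli}--\ref{SDE.application.SDE.theorem.lebrislion} can be relaxed to an $L^{p'}$--density assumption provided the one-sided Lipschitz modulus of $b$ and the Sobolev-regularity bound on $\sigma$ are strengthened from $L^\infty_{t,x,\mathrm{loc}}$ to integrability of order $p$, exactly as in the assumptions $f_R\in L^p([0,T]\times B_R(0))$ and $\sup_t\norm[{L^{2p}(B_R(0))}]{|D\sigma_t|}<\infty$. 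Indeed, when comparing two $P^{p'}_v$--weak solutions $X,Y$ via the usual $|X-Y|^2$--It\^o estimate (with the Champagnat--Jabin smoothing kernels) the terms involving the one-sided modulus and the truncated gradient of $\sigma$ get paired, through the occupation-density identity, with the $L^{p'}$--density of the time-marginal of $X$ or $Y$; Hölder's inequality with exponents $p,p'$ then closes the estimate. Once $P^{p'}_v$--pathwise uniqueness is secured, Theorem \ref{SDE.mainresult.abstract} yields the existence of a unique $P^{p'}_v$--strong solution, completing the proof.
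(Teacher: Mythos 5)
Your overall route matches the paper's: the proof is exactly the combination of an existence result from \cite{bogachev2015FPKE} (here Theorem \ref{SDE.application.FPKE.theorem.AMS}), a restricted pathwise uniqueness result, and the abstract Theorem \ref{SDE.mainresult.abstract}, and your verifications of \eqref{SDE.superposition.conditionRoe} and of the inclusion $P^{(\mu_t)}\subseteq P^{p'}_v$ are correct. One clarification on the pathwise uniqueness step: what is needed and what the paper invokes is Theorem \ref{SDE.PU.theorem.rz10} (the R\"ockner--Zhang monotonicity result), \emph{not} the Champagnat--Jabin generalisation in Theorem \ref{SDE.PU.theorem.mainresult}. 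The latter assumes $b\in L^p([0,T];W^{1,p}_{\mathrm{loc}}(\rd;\rd))$, i.e.\ local Sobolev regularity of $b$, which is \emph{not} given in Theorem \ref{SDE.application.SDE.theorem.ams} — you only have the one-sided Lipschitz bound with $L^p$ modulus $f_R$. By contrast, the hypotheses of Theorem \ref{SDE.application.SDE.theorem.ams} directly yield \ref{SDE.PU.condition.monotonicity}: the drift part is exactly the stated OSL inequality, and for $\sigma$ the Crippa--De Lellis estimate \eqref{CrippaDeLellisestimate:ineq} combined with the boundedness of the local maximal operator on $L^{2p}$ (valid since $p>1$; this is Lemma \ref{SDE.PU.lemma.SobolevCoeffSatOSL}) turns $\sup_t\norm[{L^{2p}(B_R(0))}]{|D\sigma_t|}<\infty$ into the $\sigma$-part of \ref{SDE.PU.condition.monotonicity} with an $L^p$ modulus. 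So the heavier Champagnat--Jabin machinery — which is what makes the $p=1$ case work in Theorem \ref{SDE.PU.theorem.mainresult} — is unnecessary here; Theorem \ref{SDE.PU.theorem.rz10} gives $P^{p',\mathrm{loc}}_v$-pathwise uniqueness, hence $P^{p'}_v$-pathwise uniqueness, and Theorem \ref{SDE.mainresult.abstract} finishes the proof as you say.
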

\begin{remark}\label{SDE.application.SDE.remark.ams}
	We would like to emphasise that the conditions in Theorem \ref{SDE.application.SDE.theorem.ams} do not generically imply that $\sigma$ is locally bounded, continuous or bounded away from zero on sets of positive Lebesgue measure.
	Indeed, let $d\geq 5$, $\alpha \in \left(0, \frac{d-4}{2(d-2)}\wedge \frac{1}{2}\right)$ and $f\in L^\infty((0,T))$. Then it is easy to check that  for $b\equiv 0$ and $\sigma(t,x):= f(t)|x|^{\alpha(2-d)}\mathbbm{1}_{d\times d}, (t,x) \in [0,T]\times\rd,$ there exists some $p\in (1,\infty)$ such that the conditions of Theorem \ref{SDE.application.SDE.theorem.ams} are satisfied. 
	\end{remark}
\subsection{Uniqueness results for \eqref{FPKE} obtained with the technique of \cite{roeckner2010weakuniqueness}}\label{section.mainResults.uniquenessFPKE}
In \cite{roeckner2010weakuniqueness}, the authors proved a restricted uniqueness result for linear Fokker--Planck--Kolmogorov equations with bounded coefficients, based on the superposition principle and a restricted pathwise uniqueness result for SDEs.
Since in \cite{roeckner2010weakuniqueness} the authors needed to use the superposition principle by Figalli at that time, their result has automatically been restricted to bounded coefficients $b, \sigma$. Their pathwise uniqueness argument, however, works also in the case of an unbounded drift coefficient $b$ and diffusion coefficient $\sigma$ (cf. Theorem \ref{SDE.PU.theorem.rz10}). So, using a superposition principle result for not necessarily bounded coefficients such as in \cite{trevisan2016super} or \cite{bogachev2021super} we can obtain a more general result.
Furthermore, we complement their result by a local Sobolev condition on the coefficients, which is not covered by the condition \textup{(MC$_{p}^{b,\sigma}$)} below, see Remark \ref{SDE.PU.remark.OSLcounterexample}. This local Sobolev condition stems from the new pathwise uniqueness result in Theorem \ref{SDE.PU.theorem.mainresult}, whose proof is based on the techniques employed in \cite{champagnat2018strong} and \cite{roeckner2010weakuniqueness}.

We introduce the following monotonicity condition for the coefficients $b$ and $\sigma$ of \eqref{SDE}.
\begin{enumerate}[label=\textup{(MC$_{p}^{b,\sigma}$)}, leftmargin=1.6cm]
	\item Assume that for any radius $R>0$ there exists a function $f_R \in L^{p}([0,T]\times B_R(0))$ such that for a.e. $(t,x,y) \in [0,T]\times B_R(0)\times B_R(0)$\label{SDE.PU.condition.monotonicity}
	\begin{align}\tag{OSL}\label{SDE.PU.condition.oneSidedLipschitz:eq}
		2\scalarproduct[\rd]{x-y}{{b}(t,x)-{b}(t,y)}&+|{\sigma}(t,x)-{\sigma}(t,y)|^2 
		\leq (f_R(t,x)+f_R(t,y)) |x-y|^2.\notag
	\end{align}
\end{enumerate}
We have the following result.
\begin{theorem}[uniqueness among solutions to \eqref{FPKE}]\label{SDE.application.FPKE.theorem.uniqueness}
	Let $p,\bar{p},q,\bar{q} \in [1,\infty]$ such that ${\nicefrac{1}{p}+\nicefrac{1}{q}=1}$ and ${\nicefrac{1}{\bar{p}}+\nicefrac{1}{\bar{q}}=1}$. Assume that
		$b \in L^{\bar{p}}([0,T]\times\rd;\rd), \sigma \in L^{2\bar{p}}([0,T]\times\rd;\RR^{d\times d_1})$ and \ref{SDE.PU.condition.monotonicity} holds.
		In the case $p=1$, the assumption \textup{\textup{(MC$_{1}^{b,\sigma}$)}} can be replaced by ${b \in L^1([0,T];W^{1,1}_{\mathrm{loc}}(\rd;\rd))}, \sigma \in L^2([0,T];W^{1,2}_{\mathrm{loc}}(\rd;\RR^{d\times d_1}))$.
	
	Let $\nu \in \mathcal{M}_b(\rd)\cap \mathcal{M}_+(\rd)$. Then 
	there exists at most one solution to \eqref{FPKE} with $a:={\sigma\sigma^*\slash 2}$  in $\mathscr{L}^{q, \bar{q}}_\nu$, where
		$\mathscr{L}^{q, \bar{q}}_\nu$ is defined as the set of narrowly continuous curves $(\mu_t)_{t\in [0,T]}\subseteq \mathcal{M}_b(\rd)\cap \mathcal{M}_+(\rd)$ such that $\mu_0 = \nu$ and
		$\mu\in L^{\bar{q}}([0,T]\times\rd)\cap L^{q}_{\mathrm{loc}}([0,T]\times\rd)$.
\end{theorem}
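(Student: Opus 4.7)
The plan is to mimic the scheme of Röckner-Zhang 2010: lift each solution to \eqref{FPKE} in $\mathscr{L}^{q,\bar{q}}_\nu$ to a probabilistically weak solution of \eqref{SDE} via the superposition principle (Theorem \ref{SDE.theorem.superpositionRoe}), then invoke a restricted pathwise uniqueness result from Section \ref{section.SDE.pathwiseUniqueness} combined with the restricted Yamada-Watanabe theorem (Theorem \ref{theorem.restrictedYamadaWatanabe}) to extract weak uniqueness in a suitable restricted class. Let $\mu^1, \mu^2 \in \mathscr{L}^{q,\bar{q}}_\nu$ be two candidate solutions. The case $\nu=0$ is trivial, so one may normalise by $c:=\nu(\rd)>0$. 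Mass conservation, i.e.\ $\mu^i_t(\rd)=c$ for every $t$, follows from \eqref{FPKE.definition.solution:1} by inserting a cutoff $\varphi_n(x)=\varphi(x/n)$ with $\varphi \in C_c^\infty(\rd)$ and $\varphi \equiv 1$ near the origin: the bounds $|\nabla \varphi_n|\lesssim 1/n$ and $|D^2\varphi_n|\lesssim 1/n^2$ together with $|b|\mu^i,|\sigma|^2\mu^i \in L^1([0,T]\times\rd)$, which itself follows from Hölder in $(\bar p, \bar q)$, force the $L_s\varphi_n$-terms to vanish as $n\to\infty$. After this reduction, $\nu \in \PPPP(\rd)$ and each $\mu^i$ is a probability solution.

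The first key analytic step is to verify the condition \eqref{SDE.superposition.conditionRoe} for each $\mu^i$. Using $|\scalarproduct[\rd]{b(t,x)}{x}|/(1+|x|^2)\le |b(t,x)|$ and $|a|\le |\sigma|^2/2$, Hölder's inequality in the conjugate pair $(\bar p, \bar q)$ gives
\begin{align*}
\int_0^T\!\int_\rd \frac{|\scalarproduct[\rd]{b}{x}|+|a|}{1+|x|^2}\,\mu^i_t(\mathrm{d}x)\,\mathrm{d}s
\le \norm[L^{\bar p}]{b}\, \norm[L^{\bar q}]{\mu^i}
+\tfrac{1}{2}\norm[L^{2\bar p}]{\sigma}^2\, \norm[L^{\bar q}]{\mu^i}<\infty.
\end{align*}
Theorem \ref{SDE.theorem.superpositionRoe} then produces solutions $Q^i \in \PPPP(C([0,T];\rd))$ to the martingale problem for $L(a,b)$ starting at $\nu$ with $Q^i\circ\pi_t\inv=\mu^i_t$, and Proposition \ref{SDE.proposition.martingaleSolutionEQweaksolution} upgrades these to probabilistically weak solutions $(X^i, W^i)$ to \eqref{SDE} with $\law{X^i(t)}=\mu^i_t$ for every $t$.

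I would next fix the class $P_\nu := P^{\bar q}_\nu \cap P^{q,\mathrm{loc}}_\nu$, which by the hypothesis $\mu^i \in L^{\bar q}([0,T]\times\rd)\cap L^q_{\mathrm{loc}}([0,T]\times\rd)$ contains the laws of both $(X^i, W^i)$. The restricted pathwise uniqueness result from Section \ref{section.SDE.pathwiseUniqueness} applies in $P_\nu$: under \textup{(MC$_p^{b,\sigma}$)} by a Gronwall-type argument in the spirit of \cite{roeckner2010weakuniqueness} in which $f_R \in L^p_{\mathrm{loc}}$ is paired through Hölder with the density in $L^q_{\mathrm{loc}}$, and in the $p=1$ local Sobolev case via Theorem \ref{SDE.PU.theorem.mainresult}. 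Applying Theorem \ref{theorem.restrictedYamadaWatanabe} (taking, e.g., $\mu=\mu^1$ as the required probability solution to \eqref{FPKE}) upgrades $P_\nu$-pathwise uniqueness to the existence of a unique $P_\nu$-strong solution, and Remark \ref{SDE.definition.remark.uniqStrongSolutionImpliesWeakUniq} then yields $P_\nu$-weak uniqueness. Since $(X^1,W^1)$ and $(X^2,W^2)$ are both $P_\nu$-weak solutions, their laws coincide, hence $\mu^1_t = \mu^2_t$ for every $t\in[0,T]$.

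The hardest part is the careful bookkeeping between three distinct integrability pairings within a single framework: the global pair $(\bar p, \bar q)$ used both for \eqref{SDE.superposition.conditionRoe} and for mass conservation, the local pair $(p,q)$ exploited in the restricted pathwise uniqueness via Hölder against $f_R$, and the choice of $P_\nu$ so that it simultaneously contains the superposition lifts and admits restricted pathwise uniqueness under the given hypotheses on $b$ and $\sigma$. A secondary technical point is the clean split between the two sub-cases—general $p$ under \textup{(MC$_p^{b,\sigma}$)} versus $p=1$ under local Sobolev regularity—each invoking a distinct restricted pathwise uniqueness result from Section \ref{section.SDE.pathwiseUniqueness}, although the overall structure of the argument is identical.
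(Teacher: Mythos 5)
Your proposal is correct and follows essentially the same route as the paper: mass conservation and normalisation to reduce to probability solutions, verification of \eqref{SDE.superposition.conditionRoe} via H\"older in $(\bar p, \bar q)$, superposition to lift to weak solutions of \eqref{SDE}, and restricted pathwise uniqueness (Theorem \ref{SDE.PU.theorem.rz10} for general $p$, Theorem \ref{SDE.PU.theorem.mainresult} for $p=1$) fed through the restricted Yamada--Watanabe theorem to yield weak uniqueness. The only difference is that you spell out the mass-conservation cutoff and the \eqref{SDE.superposition.conditionRoe} estimates explicitly, whereas the paper delegates these to Proposition \ref{FPKE.solution.massconservation} and Proposition \ref{SDE.FPKE.uniqueness.theorem.abstract} respectively.
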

\begin{remark}
In \cite{luo2014uniq}, Luo extended the weak uniqueness result of \cite[Theorem 1.1]{roeckner2010weakuniqueness} (i.e. Theorem \ref{SDE.application.FPKE.theorem.uniqueness} in the case $p=1,\bar{p}=\infty$) by the case $\sigma \in L^\infty([0,T]\times\rd;\RR^{d\times d}) \cap L^2([0,T];W^{1,2}_{\mathrm{loc}}(\rd;\RR^{d\times d_1}))$, $b\in L^\infty([0,T]\times\rd)$, where $D_x b$ is a finite sum of certain singular integrals. We would like to remark that the class of such drift coefficients $b$ does not include $L^1([0,T];W^{1,1}_{\mathrm{loc}}(\rd;\rd))$, see Remark \ref{SDE.PU.remark.SIcounterexample}.
\end{remark}
This result confirms Trevisan's conjecture that the uniqueness result \cite[Theorem 1.1]{roeckner2010weakuniqueness} for \eqref{FPKE} is true for not necessarily bounded coefficients using Trevisan's superposition principle (cf. \cite[p.~131]{trevisan2014thesis}).

\section{Pathwise uniqueness for \eqref{SDE} with local Sobolev coefficients}\label{section.SDE.pathwiseUniqueness}
In this section, we prove a new pathwise uniqueness result for \eqref{SDE} with locally Sobolev regular coefficients, among probabilistically weak solutions  whose time-marginal law densities satisfy a certain Lebesgue integrability condition.
As already mentioned in the introductory part of this chapter, most of the results on pathwise uniqueness in the literature are centred around spacial continuity and/or degeneracy assumptions on the diffusion coefficient $\sigma$, see e.g. \cite{yamada1971yamada2,zvonkin74transformation,veretennikov1981strong,krylov05strong,fang2005nonlipschitz,zhang2011singular,ling2019sdes}.
In fact, the conditions to obtain probabilistically strong solutions to \eqref{SDE} in the previous sources reflect the conditions needed to prove pathwise uniqueness, since these strong existence results build upon the classical Yamada--Watanabe theorem and the conditions for pathwise uniqueness are stronger than those for weak existence in these sources.
In the case of spacial discontinuity of (time-dependent) $b,\sigma$ and possible degeneracy of $\sigma$, \cite{roeckner2010weakuniqueness}, \cite{luo2014uniq}, and \cite{champagnat2018strong} seem to provide one of the best multidimensional restricted pathwise uniqueness results up-to-date.
A careful observation of the techniques of the proof in \cite[Theorem 2.13 (ii)]{champagnat2018strong} and \cite[Theorem 1.1]{roeckner2010weakuniqueness} shows that we can localise the regularity conditions on $b$ and $\sigma$ in \cite[Theorem 2.13 (ii)]{champagnat2018strong}, resulting in the more general result presented in Theorem \ref{SDE.PU.theorem.mainresult.abstract}.
Similar to \cite[Theorem 2.13 (ii)]{champagnat2018strong}, Theorem \ref{SDE.PU.theorem.mainresult.abstract} incorporates an abstract condition on the Schwartz-distributional derivative of the drift coefficient $b$, which is why we present this result in the more feasible but special form of a local Sobolev regularity condition.

The following theorem is the main result of this section.
\begin{theorem}[pathwise uniqueness]\label{SDE.PU.theorem.mainresult}
	\sloppy Let $p\in [1,\infty), p' \in (1,\infty]$, such that
	$\nicefrac{1}{p}+\nicefrac{1}{p'}=1.$
Let $b \in L^p([0,T];W^{1,p}_{\mathrm{loc}}(\rd;\RR^{d}))$, $\sigma \in L^{2p}([0,T];W^{1,2p}_{\mathrm{loc}}(\rd;\RR^{d\times d_1}))$.

	Let $\nu \in \PPPP(\rd)$. Assume $(X,W),(Y,W)$ are two $P^{p', \mathrm{loc}}_\nu$-weak solutions to \eqref{SDE} on the same stochastic basis $(\Omega,\FF, \PP; (\FF_t)_{t\in [0,T]})$ with respect to the same Brownian motion $(W(t))_{t\in [0,T]}$ such that $X(0)=Y(0)$ $\PP$-a.e.
	Then
		$\sup_{t\in [0,T]}|X_t-Y_t| = 0$ $\PP$-a.s.
\end{theorem}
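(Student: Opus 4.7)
The plan is to apply Itô's formula to a regularised logarithmic weight $\Phi_\delta(z) := \log(1 + |z|^2/\delta^2)$ evaluated at $Z_t := X_t - Y_t$, combine the result with the pointwise Hajlasz--type estimate for Sobolev functions, and let $\delta \downarrow 0$. First I would localise by the stopping time $\tau_R := \inf\{t \in [0,T] : |X_t| \vee |Y_t| \geq R\}$, $R \in \LN$; since the marginal laws of $X$ and $Y$ have densities in $L^{p'}_{\mathrm{loc}}([0,T]\times\rd)$, both processes are $\PP$-a.s. finite-valued, so $\tau_R \uparrow T$ $\PP$-a.s. On $[0, \tau_R]$ both paths stay in the ball $B_R(0)$, which is where the local Sobolev regularity of $b$ and $\sigma$ is exploited.

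Applying Itô on $[0, t \wedge \tau_R]$ and taking expectation, the local martingale contribution vanishes, leaving a drift term bounded by $|\nabla \Phi_\delta(Z_s)|\,|b(s,X_s) - b(s,Y_s)|$ and a quadratic-variation term bounded by $|D^2 \Phi_\delta(Z_s)|\,|\sigma(s,X_s) - \sigma(s,Y_s)|^2$. The elementary estimates $|\nabla \Phi_\delta(z)| \leq C|z|/(\delta^2 + |z|^2)$ and $|D^2 \Phi_\delta(z)| \leq C/(\delta^2 + |z|^2)$ combined with the classical maximal-function bound $|f(x) - f(y)| \leq C_d |x-y|(M_{2R}|\nabla f|(x) + M_{2R}|\nabla f|(y))$, valid a.e.\ on $B_R$ for $f \in W^{1,q}_{\mathrm{loc}}(\rd)$ and applied to $b(s, \cdot)$ and each component of $\sigma(s, \cdot)$, dominate the integrands by $C(M_{2R}|\nabla b|(s, X_s) + M_{2R}|\nabla b|(s, Y_s) + M_{2R}|\nabla \sigma|(s, X_s)^2 + M_{2R}|\nabla \sigma|(s, Y_s)^2)$, uniformly in $\delta > 0$.

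Taking expectations, the density integrability enters via $\E\left[\int_0^{t \wedge \tau_R} g(s, X_s)\,\mathrm{d}s\right] \leq \norm[{L^p([0,T]\times B_R)}]{g}\,\norm[{L^{p'}([0,T]\times B_R)}]{\rho^X}$ for nonnegative $g \in L^p([0,T]\times B_R)$, and analogously for $Y$. When $p \in (1, \infty)$, the Hardy--Littlewood maximal inequality yields $\norm[{L^p}]{M_{2R}|\nabla b|} \leq C_{p,R}\,\norm[{L^p}]{|\nabla b|}$ and $\norm[{L^p}]{M_{2R}|\nabla \sigma|^2} \leq C_{p,R}\,\norm[{L^{2p}}]{|\nabla \sigma|}^2$, so that $\E[\Phi_\delta(Z_{t \wedge \tau_R})]$ is bounded by a constant $C(R, T, b, \sigma)$ independent of $\delta$. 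Since $\Phi_\delta(Z_0) = 0$ and $\Phi_\delta(z) \to +\infty$ as $\delta \downarrow 0$ for every $z \neq 0$, Fatou's lemma forces $Z_{t \wedge \tau_R} = 0$ $\PP$-a.s., and path continuity together with $\tau_R \uparrow T$ then upgrade this to $\sup_{t \in [0, T]} |Z_t| = 0$ $\PP$-a.s.

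The main difficulty is the endpoint case $p = 1$, $p' = \infty$, where the Hardy--Littlewood maximal operator fails to map $L^1$ into $L^1$ and the control of $\norm[{L^1(B_R)}]{M_{2R}|\nabla b|}$ breaks down. Here the essentially bounded densities of $X$ and $Y$ must be exploited differently: I would mollify $b, \sigma$ into $b^\varepsilon, \sigma^\varepsilon$, apply the trivial Lipschitz estimate $|b^\varepsilon(x) - b^\varepsilon(y)| \leq |x - y|\,\norm[{L^\infty(B_R)}]{|\nabla b^\varepsilon|}$ inside the Itô bound, then control the error terms $b - b^\varepsilon$, $\sigma - \sigma^\varepsilon$ in $L^1([0, T] \times B_R)$ against the bounded densities by duality, and finally optimise over $\varepsilon$. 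This is precisely the point at which the techniques of \cite{roeckner2010weakuniqueness} and \cite{champagnat2018strong} have to be carefully combined, as announced in the introduction.
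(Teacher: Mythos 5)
Your argument for $p\in (1,\infty)$ is sound and is essentially the R\"ockner--Zhang route (Theorem \ref{SDE.PU.theorem.rz10} in the paper, via Lemma \ref{SDE.PU.lemma.SobolevCoeffSatOSL}): the Hajlasz maximal-function estimate, boundedness of $\M_{2R}$ on $L^p$ for $p>1$, duality against $L^{p'}$ densities, and a Crippa--De Lellis/logarithmic-weight or cut-off argument. Your choice of $\Phi_\delta = \log(1+|z|^2/\delta^2)$ versus the paper's smooth cut-off $L_\varepsilon$ is a cosmetic difference; both localise the Itô estimate to the annulus $|Z_s|\sim\varepsilon$ and rely on the same maximal-function bound.

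The genuine gap is in the endpoint $p=1$, which is the novel content of the theorem. Your sketch --- mollify $b,\sigma$, use the trivial Lipschitz bound $|b^\varepsilon(x)-b^\varepsilon(y)| \le |x-y|\,\|\nabla b^\varepsilon\|_{L^\infty(B_R)}$, control the errors in $L^1$ by duality against bounded densities, and optimise over $\varepsilon$ --- does not close. With the logarithmic weight, the drift contribution from the Lipschitz term is of order $T\,\|\nabla b^\varepsilon\|_{L^\infty(B_R)}$, which must be compared against $\Phi_\delta(z)\sim |\log\delta|$; this forces $\|\nabla b^\varepsilon\|_{L^\infty} = o(|\log\delta|)$, i.e.\ $\delta$ must be taken astronomically small relative to $\varepsilon$. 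Simultaneously, the error term carries the factor $|\nabla\Phi_\delta|\lesssim\delta^{-1}$, so one needs $\delta^{-1}\|b-b^\varepsilon\|_{L^1([0,T]\times B_R)}\to 0$. But $b\in L^1_t(W^{1,1}_{x,\mathrm{loc}})$ gives no quantitative rate for $\|b-b^\varepsilon\|_{L^1}$, so the two constraints on $\delta$ in terms of $\varepsilon$ cannot in general be satisfied together. (The mollification is in fact used in the paper too, but only to justify applying the Hajlasz estimate pointwise; the smallness does not come from balancing rates.)

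The paper's resolution is qualitatively different: it replaces the standard Hardy--Littlewood operator by the Champagnat--Jabin truncated operator $\M^{R+2}_{\varepsilon^{-1}}(Db_t)$, which \emph{does} admit a useful $L^1$-bound of the form \eqref{SDE.PU.newMaximalFunction.condition} via the de la Vall\'ee--Poussin theorem applied to $|Db|^p$, yielding a super-linear function $\phi_R$ of modest growth (Proposition \ref{SDE.PU.proposition.SobolevMLR}). The cut-off is then run along dyadic scales $\varepsilon = 2^{-k}$, the scales are grouped into blocks $I_i=[a_i,c_i)$ with $a_i=c_i^2$, and an averaging/pigeonhole argument over each block produces a subsequence $n_i$ along which the problematic term $\bar{I}_2^{\delta,2^{-n_i}}$ vanishes. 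This subsequence extraction, together with the modest-growth control by $\phi_R$, is precisely what replaces the quantitative rate your argument would need. Without it, the $p=1$ case is not proved.
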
 
\begin{remark}
\begin{enumerate}[label=(\roman*)]
\item In the case $p \in (1,\infty)$, Theorem \ref{SDE.PU.theorem.mainresult} 
is covered by the pathwise uniqueness result in the proof of \cite[Theorem 1.1]{roeckner2010weakuniqueness}.
\item To the best of the authors' knowledge, the case $p=1$ is not covered by the literature so far. In particular, the result is not proved in \cite[Proof of Theorem 1.1]{roeckner2010weakuniqueness}, see Remark \ref{SDE.PU.remark.OSLcounterexample}.
Furthermore, the case is also not covered by \cite[Proof of Theorem 1.2]{luo2014uniq}, 
 see Remark \ref{SDE.PU.remark.SIcounterexample}. 
\end{enumerate}
\end{remark}

\subsection*{A restricted pathwise uniqueness results from \cite{roeckner2010weakuniqueness} and a comparison with Theorem \ref{SDE.PU.theorem.mainresult}}
The following restricted pathwise uniqueness result is extracted from the proof of \cite[Theorem 1.1]{roeckner2010weakuniqueness}. For the notation we refer to Sections \ref{section.mainResults.strongSolutionsSDE} and \ref{section.mainResults.uniquenessFPKE}.
\begin{theorem}[monotonicity conditions]
 \label{SDE.PU.theorem.rz10}  Let $p, p'\in [1,\infty]$, such that
	$\nicefrac{1}{p}+\nicefrac{1}{p'}=1.$
Let $b \in L^p_{\mathrm{loc}}([0,T]\times\rd;\rd)$ and $\sigma \in L^{2p}_{\mathrm{loc}}([0,T]\times \rd;\RR^{d\times d_1})$ such that \ref{SDE.PU.condition.monotonicity} holds.
	Let $\nu \in \PPPP(\rd)$ and assume that $(X,W),(Y,W)$ are two $P^{p', \mathrm{loc}}_\nu$-weak solutions to \eqref{SDE} on a common stochastic basis $(\Omega,\FF,\PP; (\FF_t)_{t\in [0,T]})$ with respect to the same Brownian motion $(W(t))_{t\in [0,T]}$ and $X(0)=Y(0)$ $\PP$-a.s.
		Then $\sup_{t\in [0,T]}|X(t)-Y(t)|=0$.
\end{theorem}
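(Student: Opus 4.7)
The natural route is an Itô calculation on $|X(t)-Y(t)|^2$ coupled with an exponential weight that absorbs the random, merely $L^p$--integrable factor $f_R(t,X_t)+f_R(t,Y_t)$ coming from the monotonicity condition \ref{SDE.PU.condition.monotonicity}. For $R>0$, I introduce
\begin{align*}
	\tau_R := \inf\{t\in[0,T]:\ |X(t)|\vee|Y(t)|\geq R\}\wedge T,
	\qquad
	A_t := \int_0^{t\wedge\tau_R}\bigl(f_R(s,X(s))+f_R(s,Y(s))\bigr)\,\mathrm{d}s.
\end{align*}
The first key step is to verify that $A_T<\infty$ $\PP$-a.s., which is exactly where the $P^{p',\mathrm{loc}}_\nu$--assumption on the time-marginal densities is used. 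By Hölder's inequality applied on $[0,T]\times B_R(0)$,
\begin{align*}
	\E\!\int_0^{T\wedge\tau_R}\! f_R(s,X(s))\,\mathrm{d}s
	\leq \int_0^T\!\int_{B_R(0)} f_R(s,x)\,\rho^X_s(x)\,\mathrm{d}x\,\mathrm{d}s
	\leq \|f_R\|_{L^p([0,T]\times B_R(0))}\,\|\rho^X\|_{L^{p'}([0,T]\times B_R(0))}<\infty,
\end{align*}
and analogously for $Y$; thus $A_T<\infty$ $\PP$-a.s., so $e^{-A_t}\in(0,1]$ is a well-defined continuous adapted process.

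The second step is a localised Itô computation. The conditions $b\in L^p_{\mathrm{loc}}$ and $\sigma\in L^{2p}_{\mathrm{loc}}$ combined with the density bound guarantee that $b(\cdot,X_\cdot), b(\cdot,Y_\cdot)\in L^1([0,T];\rd)$ and $\sigma(\cdot,X_\cdot),\sigma(\cdot,Y_\cdot)\in L^2([0,T];\RR^{d\times d_1})$ $\PP$-a.s.\ (again by Hölder on $[0,T]\times B_R(0)$), so the Itô formula applies to $Z_t:=X(t)-Y(t)$ on $[0,\tau_R]$ and gives, after applying the product rule to $e^{-A_t}|Z_t|^2$, for $t\in[0,T]$
\begin{align*}
	e^{-A_{t\wedge\tau_R}}|Z_{t\wedge\tau_R}|^2
	&= |Z_0|^2 + \int_0^{t\wedge\tau_R} e^{-A_s}\Bigl(2\scalarproduct[\rd]{Z_s}{b(s,X_s)-b(s,Y_s)}+|\sigma(s,X_s)-\sigma(s,Y_s)|^2\\
	&\qquad\qquad\qquad\qquad\qquad -(f_R(s,X_s)+f_R(s,Y_s))|Z_s|^2\Bigr)\mathrm{d}s
	+ M_{t\wedge\tau_R},
\end{align*}
where $M_t:=2\int_0^t e^{-A_s}Z_s^\top(\sigma(s,X_s)-\sigma(s,Y_s))\,\mathrm{d}W_s$ is a continuous local martingale. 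By \ref{SDE.PU.condition.monotonicity} the $\mathrm{d}s$--integrand is $\leq 0$, so $e^{-A_{\cdot\wedge\tau_R}}|Z_{\cdot\wedge\tau_R}|^2$ is a nonnegative local supermartingale. Introducing an additional stopping time $\tau_{R,N}:=\tau_R\wedge\inf\{t:\int_0^t e^{-2A_s}|Z_s|^2|\sigma(s,X_s)-\sigma(s,Y_s)|^2\mathrm{d}s\geq N\}$ turns $M_{\cdot\wedge\tau_{R,N}}$ into a true martingale; taking expectations, using $Z_0=0$ $\PP$-a.s., and sending $N\to\infty$ by monotone convergence yields
\begin{align*}
	\E\!\left[e^{-A_{t\wedge\tau_R}}|Z_{t\wedge\tau_R}|^2\right]\leq 0,
\end{align*}
so that $Z_{t\wedge\tau_R}=0$ $\PP$-a.s.\ for every $t\in[0,T]$. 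Path continuity then gives $Z\equiv 0$ on $[0,\tau_R]$ $\PP$-a.s., and since $X,Y$ have continuous paths on $[0,T]$ we have $\tau_R\to T$ $\PP$-a.s.\ as $R\to\infty$, proving $\sup_{t\in[0,T]}|X(t)-Y(t)|=0$.

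\textbf{Main obstacle.} The only delicate point is establishing the a.s.\ finiteness of $A_T$ uniformly enough to make $e^{-A_t}$ a useful weight; this is precisely the role of the assumption that both solutions lie in $P^{p',\mathrm{loc}}_\nu$. If one tried to avoid this and use only the integrability of $f_R$, no Gronwall argument would be available since $f_R\notin L^\infty$. Once this is in place, the proof reduces to the standard supermartingale/localisation routine, and no further regularity of $b,\sigma$ beyond \ref{SDE.PU.condition.monotonicity} is needed.
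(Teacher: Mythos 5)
Your overall strategy---introduce the localising stopping time $\tau_R$, the exponential weight $e^{-A_t}$ absorbing the $L^p$-integrable factor from \ref{SDE.PU.condition.monotonicity}, and conclude via a nonnegative local supermartingale---is the right one and matches the architecture of the proof of \cite[Theorem 1.1]{roeckner2010weakuniqueness} that this theorem cites. The verification that $A_T<\infty$ $\PP$-a.s.\ via H\"older and the $P^{p',\mathrm{loc}}_\nu$ hypothesis is also correct.

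However, the step where you invoke \ref{SDE.PU.condition.monotonicity} to declare the $\mathrm{d}s$-integrand nonpositive contains a genuine gap. The inequality \eqref{SDE.PU.condition.oneSidedLipschitz:eq} is only assumed to hold for Lebesgue-a.e.\ $(t,x,y)\in[0,T]\times B_R(0)\times B_R(0)$, and you then substitute $(s,X_s(\omega),Y_s(\omega))$ directly into it. This is not justified: although each marginal law $\law{X_s}$, $\law{Y_s}$ has an $L^{p'}_{\mathrm{loc}}$ density, the \emph{joint} law of $(X_s,Y_s)$ on $\rd\times\rd$ need not be absolutely continuous with respect to Lebesgue measure on $\RR^{2d}$ --- indeed, precisely if the conclusion holds, the joint law sits on the Lebesgue-null diagonal, and even off-diagonal there is no reason for a density since $X$ and $Y$ are coupled through the same driving noise $W$. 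Consequently the exceptional null set in \ref{SDE.PU.condition.monotonicity} could be charged by the process $(s,X_s,Y_s)$, and the claimed pointwise sign of the drift does not follow. The standard repair, which is precisely what the proofs in \cite{roeckner2010weakuniqueness} and this paper's own Theorem \ref{SDE.PU.theorem.mainresult.abstract} carry out, is to mollify: replace $b,\sigma,f_R$ by $b^\delta=b*\varphi_\delta$, $\sigma^\delta$, $(f_R)^\delta$, for which the one-sided Lipschitz/OSL inequality holds for \emph{all} $x,y$ in the ball, apply It\^o with the mollified coefficients, and control the resulting correction terms $|b^\delta-b|$, $|\sigma^\delta-\sigma|$ by H\"older against the $L^{p'}_{\mathrm{loc}}$ densities before letting $\delta\to 0$. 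Without this approximation layer the proof does not close.
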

\begin{proof}
	The proof is contained in the proof of \cite[Theorem 1.1]{roeckner2010weakuniqueness} for the case $d=d_1$ and $b \in L^p_{\mathrm{loc}}([0,T]\times\rd;\rd)$ and $\sigma \in L^{2p}_{\mathrm{loc}}([0,T]\times \rd;\RR^{d\times d_1})$. 
	However, the same lines of proof are valid for $d\neq d_1$ and the weaker conditions on $b$ and $\sigma$ stated above.
	Furthermore, in the proof of \cite[Theorem 1.1]{roeckner2010weakuniqueness} the authors considered weak solutions with time-marginal law densities in $L^{p'}([0,T];L^{p'}_{\mathrm{loc}}(\rd))$. However, considering weak solutions with time-marginal law densities  in $L^{p'}_{\mathrm{loc}}([0,T]\times\rd)$, as we do, does not change the proof as well.
	\end{proof}
As already indicated in the introduction to this section, Theorem \ref{SDE.PU.theorem.rz10} covers Theorem \ref{SDE.PU.theorem.mainresult} in the case $p\in (1,\infty)$. This is due to the boundedness of the (local) Hardy--Littlewoood maximal operator in $L^p$ for such $p$ and illustrated by the following lemma. This phenomenon has also been remarked in \cite[Remark 1.2]{roeckner2010weakuniqueness} before. For completeness, we provide the proof.

First we will recall the definition of the (local) Hardy--Littlewood maximal function and an important Lipschitz type estimate.
\begin{definition}
	Let $R\in [0,\infty]$. For $\mu \in \mathcal{M}_{\mathrm{loc}}(\rd;\RR^{d\times d_1})$, the (local) Hardy--Littlewood maximal function is defined as
\begin{align*}\M_{R}|\mu|(x):= \sup_{0< r\leq R}\frac{1}{(\mathrm{d}x)(B_r(0))}\int_{B_r(x)}|\mu|(dy), \text{ for all $x\in \rd$ }.
 \end{align*}
 In the case $R=\infty$, we will write $\M\equiv \M_\infty$.
\end{definition}
\begin{lemma}[{\cite[Lemma A.3]{crippa2008estimates}}]
	There exists a constant $C_d >0$ depending only on the dimension $d$ such that for all $f\in W^{1,1}_{\mathrm{loc}}(\rd;\rd)$ there exists $N \in \BBBB(\rd)$ with $(\mathrm{d}x)(N)=0$ and
\begin{align}\label{CrippaDeLellisestimate:ineq}
	|f(x)-f(y)| \leq C_d \left( \mathrm{M}_R|Df|(x) + \mathrm{M}_R|Df|(y) \right) |x-y| \text{ for all } x, y \in N^\complement \text{ such that } |x-y|\leq R.
\end{align}
\end{lemma}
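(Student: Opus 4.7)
The claim is the classical pointwise Poincar\'e--maximal function estimate of Crippa--De Lellis, and my plan is to derive it by combining the $L^1$-Poincar\'e inequality on balls with a dyadic averaging argument at Lebesgue points.

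First, let $N \in \BBBB(\rd)$ denote the Borel set of non-Lebesgue points of $f$. Since $f \in L^1_{\mathrm{loc}}(\rd;\rd)$, Lebesgue's differentiation theorem yields $(\mathrm{d}x)(N)=0$ and, for every $x \in N^\complement$, the identity $f(x) = \lim_{r \downarrow 0} f_{B_r(x)}$, where $f_B := \frac{1}{|B|}\int_B f\,\mathrm{d}z$. This is the set $N$ that will appear in the conclusion of the lemma.

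The central step is to establish the pointwise bound
$$
|f(x) - f_{B_\rho(x)}| \leq C_d\,\rho\,\M_R|Df|(x), \qquad x \in N^\complement,\ \rho \leq R.
$$
To prove this I would apply the scale-invariant $L^1$-Poincar\'e inequality on each dyadic sub-ball $B_k := B_{2^{-k}\rho}(x)$, $k \geq 0$, which gives
$$|f_{B_{k+1}} - f_{B_k}| \leq C'_d\, 2^{-k}\rho\,\M_R|Df|(x),$$
since the average of $|Df|$ over $B_k$ is dominated by $\M_R|Df|(x)$ whenever $2^{-k}\rho \leq R$. Summing the telescoping series $\sum_{k\geq 0} |f_{B_{k+1}} - f_{B_k}|$ and invoking $f_{B_k} \to f(x)$ at the Lebesgue point $x$ yields the claimed estimate.

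Finally, for $x, y \in N^\complement$ with $s := |x-y| \leq R$, I would compare both $f(x)$ and $f(y)$ to the average of $f$ over an auxiliary ball containing both points, e.g.\ $B^\ast := B_s\bigl(\tfrac{x+y}{2}\bigr)$, via
$$
|f(x) - f(y)| \leq |f(x) - f_{B^\ast}| + |f_{B^\ast} - f(y)|,
$$
and bound each term through the pointwise estimate above together with the standard comparison between off-center balls of comparable radii (itself a one-line Poincar\'e argument, since both $B^\ast$ and $B_s(x)$ are contained in $B_{3s/2}(x)$, so the two averages differ by at most $C_d\,s\,\M_R|Df|(x)$). This produces the inequality with a dimension-only constant $C_d$. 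The only real obstacle in executing this plan is the routine bookkeeping needed to keep every constant depending solely on $d$ and to match the $R$ appearing in $\M_R$ to the $R$ in the constraint $|x-y|\leq R$, which is possible by monotonicity $\M_{R'}|Df| \leq \M_R|Df|$ for $R' \leq R$. Since this is a well-known result cited from \cite{crippa2008estimates}, the paper invokes it as a black box without reproducing these details.
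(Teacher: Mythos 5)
The paper does not actually supply a proof of this lemma; it is invoked purely as a black-box citation to \cite[Lemma A.3]{crippa2008estimates}. Your sketch is therefore a reconstruction rather than an alternative to anything the paper does, and in outline it follows the classical route: take $N$ to be the complement of the Lebesgue points, telescope averages over dyadically shrinking concentric balls via the $L^1$-Poincar\'e inequality to obtain $|f(x)-f_{B_\rho(x)}|\leq C_d\,\rho\,\M_R|Df|(x)$ for $\rho\leq R$, then connect $x$ and $y$ through a common auxiliary ball. That strategy is sound.

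There is, however, a concrete flaw in your last step. Writing $s:=|x-y|$ and taking $B^\ast=B_s\bigl(\tfrac{x+y}{2}\bigr)$, you compare $f_{B^\ast}$ with $f_{B_s(x)}$ through Poincar\'e on the enclosing ball $B_{3s/2}(x)$. This introduces the average of $|Df|$ over a ball of radius $3s/2$, which is controlled by $\M_{R'}|Df|(x)$ only when $R'\geq 3s/2$. Since $s$ ranges up to $R$, this can require radii up to $3R/2>R$, and the monotonicity you cite, $\M_{R'}|Df|\leq\M_R|Df|$ for $R'\leq R$, goes the wrong way here: you would need $\M_{3R/2}|Df|\leq\M_R|Df|$, which is false. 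As written, the argument produces the inequality with $\M_{3R/2}$ on the right-hand side rather than $\M_R$. The fix is cheap: take $B^\ast:=B_{s/2}\bigl(\tfrac{x+y}{2}\bigr)$, which lies inside both $B_s(x)$ and $B_s(y)$; then every comparison ball used in Poincar\'e has radius at most $s\leq R$, only $\M_R|Df|$ ever appears, and the triangle inequality closes the argument with a dimension-only constant.
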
 
\begin{lemma}\label{SDE.PU.lemma.SobolevCoeffSatOSL}
	Let $p\in [1,\infty]$.
	Let
	\begin{align*}
		b \in L^p([0,T];W_{\mathrm{loc}}^{1,p}(\rd;\rd)), \ \ \sigma \in L^{2p}([0,T];W_{\mathrm{loc}}^{1,{2p}}(\rd;\RR^{d\times d_1})),
	\end{align*}
if $1<p\leq \infty$, and
\begin{align*}
	b \in L^1([0,T];W_{\mathrm{loc}}^{1,1+\varepsilon}(\rd;\rd)), \ \ \sigma \in L^{2}([0,T];W_{\mathrm{loc}}^{1,{2}}(\rd;\RR^{d\times d_1})),
\end{align*}
for some $\varepsilon >0$ in the case $p=1$.
Then \ref{SDE.PU.condition.monotonicity} holds.
\end{lemma}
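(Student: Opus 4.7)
The strategy is to produce an explicit $f_R$ using the Crippa--De Lellis Lipschitz estimate \eqref{CrippaDeLellisestimate:ineq} applied componentwise to $b(t,\cdot)$ and $\sigma(t,\cdot)$, and then to use $L^p$-boundedness of the Hardy--Littlewood maximal operator to control the Lebesgue norm of $f_R$ on $[0,T]\times B_R(0)$.

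First I would fix $R>0$ and apply \eqref{CrippaDeLellisestimate:ineq} with radius $2R$ (since $x,y\in B_R(0)$ implies $|x-y|\leq 2R$) to each of $b(t,\cdot)\in W^{1,1}_{\mathrm{loc}}$ and $\sigma(t,\cdot)\in W^{1,1}_{\mathrm{loc}}$ for a.e.\ $t$. Combined with the Cauchy--Schwarz inequality and the elementary estimate $(u+v)^2\leq 2u^2+2v^2$, this yields, for a.e.\ $(t,x,y)\in [0,T]\times B_R(0)\times B_R(0)$,
\begin{align*}
2\scalarproduct[\rd]{x-y}{b(t,x)-b(t,y)}+|\sigma(t,x)-\sigma(t,y)|^2
\leq (f_R(t,x)+f_R(t,y))|x-y|^2,
\end{align*}
with the natural choice
\begin{align*}
f_R(t,x):=2C_d\,\mathrm{M}_{2R}|Db(t,\cdot)|(x)+2C_d^{\,2}\bigl(\mathrm{M}_{2R}|D\sigma(t,\cdot)|(x)\bigr)^2.
\end{align*}
So the only content left is to verify that $f_R\in L^p([0,T]\times B_R(0))$.

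Next I would exploit that for $x\in B_R(0)$ and $r\leq 2R$ we have $B_r(x)\subseteq B_{3R}(0)$, which gives the localisation $\mathrm{M}_{2R}|Dg(t,\cdot)|(x)\leq \mathrm{M}\bigl(\mathbf{1}_{B_{3R}(0)}|Dg(t,\cdot)|\bigr)(x)$ for $g\in\{b,\sigma\}$. For $1<p\leq\infty$ the strong $(L^p,L^p)$-boundedness of the maximal operator then delivers
\begin{align*}
\|\mathrm{M}_{2R}|Db(t,\cdot)|\|_{L^p(B_R(0))}\lesssim \|Db(t,\cdot)\|_{L^p(B_{3R}(0))},\qquad
\|\mathrm{M}_{2R}|D\sigma(t,\cdot)|\|_{L^{2p}(B_R(0))}\lesssim \|D\sigma(t,\cdot)\|_{L^{2p}(B_{3R}(0))},
\end{align*}
and raising to the appropriate power and integrating in $t$ gives $f_R\in L^p([0,T]\times B_R(0))$ from the assumed local Sobolev regularity.

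The main obstacle is the borderline case $p=1$, where the maximal operator fails to be bounded on $L^1$. This is exactly the reason the hypothesis is strengthened to $b\in L^1([0,T];W^{1,1+\varepsilon}_{\mathrm{loc}})$. I would handle it by applying $(L^{1+\varepsilon},L^{1+\varepsilon})$-boundedness of the maximal operator (valid since $1+\varepsilon>1$) to obtain $\mathrm{M}_{2R}|Db(t,\cdot)|\in L^{1+\varepsilon}(B_R(0))$ with norm controlled by $\|Db(t,\cdot)\|_{L^{1+\varepsilon}(B_{3R}(0))}$, and then absorb the surplus integrability by H\"older's inequality on the bounded set $B_R(0)$ to recover $L^1(B_R(0))$. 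For the $\sigma$-contribution no extra assumption is needed, because when $p=1$ the exponent $2p=2$ is already strictly greater than $1$ and $(\mathrm{M}_{2R}|D\sigma(t,\cdot)|)^2\in L^1$ follows directly from $|D\sigma(t,\cdot)|\in L^2$. Integrating in time using the assumption $Db\in L^1([0,T];L^{1+\varepsilon}_{\mathrm{loc}})$ and $D\sigma\in L^2([0,T];L^2_{\mathrm{loc}})$ then concludes the proof.
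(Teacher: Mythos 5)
Your proposal is correct and follows essentially the same route as the paper's proof: the same choice of $f_R$ in terms of local maximal functions of $Db$ and $D\sigma$ via the Crippa--De Lellis estimate, strong $(L^p,L^p)$-boundedness of the local maximal operator for $1<p\leq\infty$, and for $p=1$ the extra $\varepsilon$ of integrability on $Db$ combined with H\"older/Jensen on the bounded ball, while the $\sigma$-term at $p=1$ is handled directly since $2p=2>1$. The only difference is a trivial constant ($2C_d^2$ versus the paper's $C_d^2$) in the $\sigma$-part of $f_R$.
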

\begin{proof}
For $(t,x) \in [0,T]\times\rd$ we define
\begin{align*}
	f_R(t,x):= 2C_d \M_{2R}|D b_t|(x) + C_d^2(\M_{2R}|D\sigma_t|(x))^2.
\end{align*}
Due to \eqref{CrippaDeLellisestimate:ineq}, $b$ and $\sigma$ satisfy \eqref{SDE.PU.condition.oneSidedLipschitz:eq} with such $f_R$.
 Furthermore, using the boundedness of the operator $\mathrm{M}_{2R}$ between local $L^p$-spaces (see, e.g. \cite[Lemma A.2]{crippa2008estimates}),
there exist constants $C_{d,p},C_{d,2p}>0$ just depending on $d$ and $p$ such that in the case $1<p < \infty$
 \begin{align*}
 	&\norm[{L^p([0,T]\times B_R(0))}]{f_R}\\
 	&\ \ \ \ \ \ \ \ \ \leq 2C_d\norm[{L^{p}([0,T]\times B_R(0))}]{\M_{2R}|D_x b|}^{} 
 	+ C_d^2\norm[{L^{2p}([0,T]\times B_R(0))}]{\M_{2R}|D_x \sigma|}^{2} \\
 	&\ \ \ \ \ \ \ \ \ \leq 2C_dC_{d,p}^\frac{1}{p}\norm[{L^{p}([0,T]\times B_{3R}(0))}]{|D_x b|}^{} 
 	+C_d^2C_{d,2p}^\frac{1}{p} \norm[{L^{2p}([0,T]\times B_{3R}(0))}]{|D_x \sigma|}^{2}<\infty.
 \end{align*}
 The same lines hold true in the case $p=\infty$ for $C_{d,\infty}:=1$.
 In the case $p=1$ we estimate similarly with the help of Jensen's inequality
 \begin{align*}
 	&\norm[{L^1([0,T]\times B_R(0))}]{f_R} \\
 	&\ \ \ \ \ \leq 2C_d|B_R|^{\frac{\varepsilon}{1+\varepsilon}}\norm[{L^{1}([0,T];L^{1+\varepsilon}(B_R(0)))}]{\M_{2R}|D_xb|}^{} 
 	+ C_d^2\norm[{L^{2}([0,T]\times B_R(0))}]{\M_{2R}|D_x \sigma|}^{2} \\
 	&\ \ \ \ \ \leq C_dC_{d,1+\varepsilon}^{\frac{1}{1+\varepsilon}}|B_R|^{\frac{\varepsilon}{1+\varepsilon}}\norm[{L^{1}([0,T];L^{1+\varepsilon}(B_{3R}(0)))}]{|D_x b|}^{} 
 	+C_d^2C_{d,2} \norm[{L^{2}([0,T]\times B_{3R}(0))}]{|D_x \sigma|}^{2}<\infty.
\end{align*}
\end{proof}
Now one can ask if (at least) every bounded $b \in L^1([0,T];W^{1,1}(\rd))$ satisfies  \ref{SDE.PU.condition.monotonicity} for $p=1$.
The answer is negative as the following remark shows.
\begin{remark}[bounded drifts in ${L^1([0,T];W^{1,1}(\RR))}$ do not generally satisfy (MC$_1^{b,\sigma}$), cf. \cite{hajlasz1996geometric}] \label{SDE.PU.remark.OSLcounterexample}
	We set the time-homogeneous drift coefficient $b$ as $b(x):= \frac{-x}{|x|\ln|x|}$ for $x\in \left(-\frac{1}{2}, \frac{1}{2}\right)\backslash\{0\}$, $b(0):=0$, and on $\RR\backslash \left(-\frac{1}{2}, \frac{1}{2}\right)$ in such a way that $b \in L^\infty(\RR)\cap W^{1,1}(\RR))$.
	Assume there exists $f \in L^1\left(\left(-\frac{1}{2}, \frac{1}{2}\right)\right)$ such that \eqref{SDE.PU.condition.oneSidedLipschitz:eq} holds. Since $b$ is nondecreasing on $\left(-\frac{1}{2}, \frac{1}{2}\right)$, it needs to satisfy
	\begin{align}\label{SDE.PU.remark.OSLcounterexample:1}
		2|b(x)-b(y)| \leq (f_{}(x) + f_{}(y))|x-y|
	\end{align}
	for almost every $x,y \in (-\frac{1}{2}, \frac{1}{2})$. Note that this inequality implies that $f \geq 0$ a.e.
	Now the same argument as in \cite[Example, p. 7]{hajlasz1996geometric} shows that \eqref{SDE.PU.remark.OSLcounterexample:1} cannot hold for such an $f$.
	From here, it is straightforward to see that $b$ does not satisfy \ref{SDE.PU.condition.monotonicity} for $p=1$.
\end{remark}
\subsection*{A comparison of the restricted pathwise uniqueness result from \cite{luo2014uniq} with Theorem \ref{SDE.PU.theorem.mainresult}}
Carefully combining the techniques from \cite{roeckner2010weakuniqueness, bouchut2013lagrangian}, Luo \cite[Proof of Theorem 1.2]{luo2014uniq} proved a restricted pathwise uniqueness result for \eqref{SDE} with bounded $b$ and $\sigma$ among probabilistic weak solutions with bounded time-marginal law densities. Therefore, he needed to assume that $\sigma \in L^2([0,T];W^{1,2}_{\mathrm{loc}}(\rd))$, $b\in L^1_{\mathrm{loc}}([0,T]\times\rd)$, and that the latter's Schwartz-distributional spacial derivative can be written as a finite sum of certain singular integrals.
This class of drift coefficients includes $L^1([0,T];W^{1,1}(\rd))$. In any case, it is assumed that $D_x b(t,\cdot) \in L^1_{\text{w}}(\rd;\RR^{d\times d})$ for almost every $t\in [0,T]$, where $L^1_{\text{w}}(\rd;\RR^{d\times d})$ consists of all Borel measurable functions $f:\rd 
\to \RR^{d\times d}$ such that 
\begin{align*}
	||f||_{L^1_{\text{w}}(\rd)}:= \sup_{\lambda >0} \lambda \cdot (\mathrm{d}x)(\{x \in \rd : |f|> \lambda \})<\infty.
\end{align*}
The subsequent remark shows that Theorem \ref{SDE.PU.theorem.mainresult} is not covered by the restricted pathwise uniqueness result proved in  \cite[Proof of Theorem 1.2]{luo2014uniq}. 
\begin{remark}[$W^{1,1}_{\mathrm{loc}}(\RR)\not\subseteq \{f\in L^1_{\mathrm{loc}}(\RR): f' \in L^1_{\text{w}}(\RR)\}$]\label{SDE.PU.remark.SIcounterexample}
We define $f(x):= \sum_{i \in \mathbb{Z}} \mathbbm{1}_{[i,i+2)}(x)|x-(i+1)|^{\frac{1}{2}}, x\in \rd$.
It standard to see that $f \in L^1_{\mathrm{loc}}(\RR)$ and that $f$ is weakly differentiable with (a.e. determined) weak derivative
\begin{align*}
	f'(x)= \sum_{i \in \mathbb{Z}} \mathbbm{1}_{[i,i+2)}(x) |x-(i+1)|^{-\frac{3}{2}}(x-(i+1)),\ \ x\in \RR.
\end{align*}
Since $(\mathrm{d}x)(\{x \in \RR : |f'|> \lambda \})=+\infty$ for all $\lambda>0$, $||f'||_{L^1_{\text{w}}(\RR)}=+\infty$. Hence, $W^{1,1}_{\mathrm{loc}}(\RR)\not\subseteq \{f\in L^1_{\mathrm{loc}}(\RR): f' \in L^1_{\mathrm{w}}(\RR)\}$.
\end{remark}
\subsection*{Proof of Theorem \ref{SDE.PU.theorem.mainresult}.}
In order to prove Theorem \ref{SDE.PU.theorem.mainresult}, we need some preparation. 
In \cite{champagnat2018strong}, the authors obtained a Lipschitz type estimate similar to \eqref{CrippaDeLellisestimate:ineq} replacing the usual Hardy--Littlewood maximal operator $\M\equiv\M_\infty$ by operators $\M_L, L>1$, resulting in estimate \eqref{SDE.PU.lemma.MRL.estimate:ineq} in the case $R=\infty$. The reason for this is that $\M_L(\mu), L>1,$ can be controlled nicely if $\mu\in L^1$, in contrast to $\M(\mu)$ (see \cite[p.~7]{stein1970singular}).
Employing the estimate \eqref{SDE.PU.lemma.MRL.estimate:ineq} in the case $R = \infty$, the authors of \cite{champagnat2018strong} proved a pathwise uniqueness result among weak solutions to \eqref{SDE} with bounded time-marginal law densities under general conditions on $b$ and $\sigma$, which includes the case that the coefficients are bounded and $b\in L^1([0,T];W^{1,1}(\rd))$ and $\sigma \in L^2([0,T];W^{1,2}(\rd))$.

In the following, we introduce a local version of the functional $\M_L$ on $\mathcal{M}_{\mathrm{loc}}(\rd;\RR^{d\times d_1})$. These functionals will be called $\M_L^R$, $R>0$, and are essential in the proof of Theorem \ref{SDE.PU.theorem.mainresult.abstract}, from which we will conclude Theorem \ref{SDE.PU.theorem.mainresult}.
Here, 'local' means that $\M_L^R(\mu)$ depends only on $\left.\mu\right|_{\BBBB(B_R(0))}$, where $B_R(0)$ denotes the usual open ball in $\rd$, $R>0$.
We have the following definition.

\begin{definition}\label{SDE.PU.definition.MRL} Let $n,m\in \LN,\ L>1,\ R>0$. Let $\mu\in \mathcal{M}_{b}(B_R(0);\RR^{n\times m})$. For $x\in \rd$, we set
	\begin{align}
		\M^R_L(\mu) (x):= \sqrt{\ln(L)} + \int_{B_R(0)} \frac{|\mu_a(z)|\mathbbm{1}_{|\mu_a| \geq \sqrt{\ln(L)}}(z)\mathrm{d}z+|\mu_s|(\mathrm{d}z)}{(L\inv + |x-z|)|x-z|^{d-1}}\ \ (\in [0,\infty]),
	\end{align}
	where $\mu_a$ denotes the density of the absolutely continuous part and $\mu_s$ the singular part of the Radon measure $\mu$ with respect to Lebesgue measure according to Lebesgue's decomposition theorem (cf. \cite[Theorem 2.22]{ambrosio2000BV}).
\end{definition}
The following lemma is a variant of \cite[Lemma 3.2]{champagnat2018strong}, in which they proved the following lemma for $R=\infty$ and a.e. $x,y\in \rd$.
Having a closer look at the proof of \cite[Lemma 3.2]{champagnat2018strong}, and restricting our choice of $x, y$ to a ball $B_R(0)$, it shows that the following is true.
\begin{lemma}\label{SDE.PU.lemma.MRL.estimate}
Let $n\in\LN$, $L>1$ and assume that $f \in BV_{\mathrm{loc}}(\RR^d;\RR^n)$.
Then there exists a constant $C_d>0$ depending only on the dimension $d$ such that for almost every $x,y \in B_R(0)$
\begin{align}\label{SDE.PU.lemma.MRL.estimate:ineq}
|f(x)-f(y)| \leq C_d \big(h_R^{L\inv}(x)+h_R^{L\inv}(y)\big) (|x-y| + L\inv),
\end{align}
where $h_R^{L\inv} := |f| + \M^{R+2}_L (D f)$.
\end{lemma}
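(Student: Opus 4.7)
The approach is to adapt the proof of the global statement \cite[Lemma~3.2]{champagnat2018strong} by restricting attention to the data of $f$ inside the slightly enlarged ball $B_{R+2}(0)$ whenever $x,y \in B_R(0)$. The final bound has formally the same shape as in \cite{champagnat2018strong}; only the spatial domain in the integral defining the maximal function needs adjusting. The buffer of width $2$ in $B_{R+2}(0)$ is there precisely to absorb both the mollification radius and the support of the two-point kernel that appears in the representation of $|f(x)-f(y)|$.

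First, I would mollify $f$ by a standard mollifier $\rho_\epsilon$ with $\mathrm{supp}\, \rho_\epsilon \subseteq B_\epsilon(0)$, where $\epsilon := L^{-1} \leq 1$, producing $f_\epsilon := f * \rho_\epsilon \in C^\infty(\rd)$. Both $f_\epsilon(x)$ and $\nabla f_\epsilon(x)$ then depend only on the restrictions of $f$ and of $Df$ to $B_{R+1}(0)$. Starting from the classical identity
\[ f_\epsilon(x) - f_\epsilon(y) = \int_0^1 (x-y) \cdot \nabla f_\epsilon\bigl(y+s(x-y)\bigr)\, ds, \]
the anisotropic symmetrisation argument carried out in \cite[Section~3]{champagnat2018strong} produces a two-point kernel representation of the form
\[ |f_\epsilon(x) - f_\epsilon(y)| \leq C_d\bigl(|x-y|+L^{-1}\bigr)\int_{\rd}\bigl(K_L(x,z)+K_L(y,z)\bigr)|\nabla f_\epsilon(z)|\, dz, \]
with $K_L(x,z) \leq \frac{C_d}{(L^{-1}+|x-z|)|x-z|^{d-1}}$ and with $K_L(x,\cdot)+K_L(y,\cdot)$ supported in $\{z : |z-x|\leq |x-y|+L^{-1}\} \cup \{z: |z-y|\leq |x-y|+L^{-1}\}$. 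Since $x,y\in B_R(0)$ and $L^{-1}\leq 1$, this support lies in $B_{R+2}(0)$; this is the key localisation step.

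Next, decomposing $Df$ into its absolutely continuous part $Df_a$ and its singular part $Df_s$, and further truncating the density at level $\sqrt{\ln L}$, I would follow the estimates of \cite[Lemma~3.2]{champagnat2018strong} line by line: the bounded truncated piece $Df_a\mathbbm{1}_{|Df_a|<\sqrt{\ln L}}$ contributes the additive constant $\sqrt{\ln L}$ appearing in the definition of $\M_L^{R+2}(Df)$, while the remainder $Df_a\mathbbm{1}_{|Df_a|\geq \sqrt{\ln L}}$ together with $Df_s$ and the kernel bound above produce precisely the integral part of $\M_L^{R+2}(Df)(x) + \M_L^{R+2}(Df)(y)$. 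Sending $\epsilon \downarrow 0$ along joint Lebesgue points of $f$, the mollification error $|f(x)-f_\epsilon(x)|$ produces the extra $|f|$ summand in $h_R^{L^{-1}}$.

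The main obstacle is verifying that the two-point kernel $K_L(x,\cdot)+K_L(y,\cdot)$ from \cite[Section~3]{champagnat2018strong} really is supported in the union of balls claimed above, so that restricting the $z$-integration to $B_{R+2}(0)$ is justified without loss. This is a matter of re-inspecting the derivation of the kernel in \cite{champagnat2018strong} (which is written in the global setting $R=\infty$) to check that no terms involving $Df$ outside $B_{R+2}(0)$ are introduced in the process; beyond this bookkeeping no new analytic ideas are required.
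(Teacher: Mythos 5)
Your approach matches the paper's: its entire proof is the single line ``the proof of \cite[Lemma 3.2]{champagnat2018strong} is the same here,'' and what you are supplying is precisely the verification that the argument of \cite{champagnat2018strong}, once $x,y$ are confined to $B_R(0)$ and the mollification scale is $\epsilon=L^{-1}\leq 1$, only ever accesses $Df$ inside the enlarged ball $B_{R+2}(0)$, so that replacing $\M_L$ by $\M_L^{R+2}$ is harmless.

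However, the localisation step as you write it contains a gap. You assert that the two-point kernel is supported in $\{z : |z-x|\leq |x-y|+L^{-1}\}\cup\{z:|z-y|\leq |x-y|+L^{-1}\}$ and then conclude that, because $x,y\in B_R(0)$ and $L^{-1}\leq 1$, this set lies in $B_{R+2}(0)$. That implication is false: for $x,y\in B_R(0)$ one has $|x-y|$ possibly as large as $2R$, so the stated set reaches out to radius $R+|x-y|+L^{-1}\leq 3R+1$, which strictly exceeds $R+2$ as soon as $R>\nicefrac{1}{2}$. To close the gap you need one additional observation. Either note that the claimed inequality is trivially true (with $C_d\geq 1$, using only the $|f|$ summand in $h_R^{L^{-1}}$, since $|f(x)-f(y)|\leq |f(x)|+|f(y)|\leq h_R^{L^{-1}}(x)+h_R^{L^{-1}}(y)$) whenever $|x-y|+L^{-1}\geq 1$, so that only the regime $|x-y|+L^{-1}<1$ matters, and there your stated support region does sit inside $B_{R+1}(0)\subset B_{R+2}(0)$; or argue directly that the relevant kernel is in fact supported in the $L^{-1}$-neighbourhood of the segment $[x,y]$, which by convexity of $B_R(0)$ is contained in $B_{R+L^{-1}}(0)\subseteq B_{R+1}(0)$ for all $x,y\in B_R(0)$, with no restriction on $|x-y|$. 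Without one of these two remarks, the chain of implications you give does not justify truncating the integral defining the maximal function to $B_{R+2}(0)$.
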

\begin{proof}
	In fact, the proof of \cite[Lemma 3.2]{champagnat2018strong} is the same here.
\end{proof}
Before we give the abstract main result, we need the following definition.
\begin{definition}\label{definition.superlinearity}
Let $\phi:(1,\infty)\to[0,\infty)$. $\phi$ is said to be \textit{super-linear of modest growth}, if
\begin{itemize}
	\item the map $(1,\infty)\ni r\mapsto \frac{\phi(r)}{r}$ is non-decreasing,
	\item $\frac{\phi(r)}{r}\to \infty$, for $r\to \infty$,
	\item $\frac{\phi(r)}{r\ln(r)}\to 0, \text{ for } r\to \infty$.
\end{itemize}
\end{definition}

\begin{theorem}\label{SDE.PU.theorem.mainresult.abstract}
	Let $p \in [1,\infty), p' \in (1,\infty]$ such that
	$\nicefrac{1}{p}+\nicefrac{1}{p'}=1$ and let $b \in L^p_{\mathrm{loc}}([0,T]\times\rd;\rd)$, $\sigma \in L^{2p}_{\mathrm{loc}}([0,T]\times\rd;\RR^{d\times d_1})$ such that $b(t,\cdot) \in BV_{\mathrm{loc}}(\rd;\rd)$ for a.e. $t \in [0,T]$.
	Let $\nu \in \PPPP(\rd)$. Assume that $(X,W),(Y,W)$ are two $P^{p',\mathrm{loc}}_\nu$-weak solutions to \eqref{SDE} on a common stochastic basis $(\Omega,\FF,\PP; (\FF_t)_{t\in [0,T]})$ with respect to the same Brownian motion $(W(t))_{t\in [0,T]}$, and $X(0)=Y(0)$ $\PP$-a.s.

	Assume that for every radius $R>0$, there exists a function $f_R \in L^p([0,T]\times B_R(0))$, a super-linear function $\phi_R$ of modest growth, $\varepsilon_0=\varepsilon_0(R)\in (0,1)$, and a constant $C_{T,R}>0$, which only depends on $T$ and $R$, such that for almost all $(t,x,y) \in [0,T]\times B_R(0)\times B_R(0)$
	\begin{align}\label{SDE.PU.oneSidedLipschitz}
		|\sigma(t,x)-\sigma(t,y)|^2 \leq (f_R(t,x)+f_R(t,y)) |x-y|^2,
	\end{align}
	and, for all $0<\varepsilon<\varepsilon_0$
	and all $\gamma \in \rd$,
	\begin{align}\label{SDE.PU.newMaximalFunction.condition}
		\int_0^T\int_{B_R(0)} \M^{R+2}_{\varepsilon\inv}(D_x b(t,\cdot))(x-\gamma) (u_X(t,x)+u_Y(t,x))\mathrm{d}x\mathrm{d}t \leq C_{T,R}\frac{|\ln(\varepsilon)|}{\varepsilon\phi_R(\varepsilon\inv) },
	\end{align}
	where we set $u_X(t) := \nicefrac{\mathrm{d}\law{X(t)}}{\mathrm{d}x}, u_Y(t) := \nicefrac{\mathrm{d}\law{Y(t)}}{\mathrm{d}x}$, for all $t\in [0,T]$.
Then $\sup_{t\in [0,T]}|X(t)-Y(t)|=0$ $\PP$-a.s.
\end{theorem}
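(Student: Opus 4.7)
The strategy combines a localization via stopping times with an It\^o-type estimate in the spirit of Crippa--De Lellis and Champagnat--Jabin, applied to the logarithmic functional $\Phi_\varepsilon(z) := \ln(1 + |z|^2/\varepsilon^2)$ evaluated on the difference $Z(t) := X(t) - Y(t)$. First I would introduce the stopping times $\tau_R := \inf\{t\in [0,T] : |X(t)|\vee|Y(t)|\geq R\}$; by path-continuity, $\tau_R\uparrow T$ $\PP$-a.s.\ as $R\to\infty$, so it suffices to prove $|Z(t\wedge\tau_R)|=0$ $\PP$-a.s.\ for each fixed $t\in[0,T]$ and $R>0$, and then invoke continuity of paths to upgrade to the supremum.

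Next I would apply It\^o's formula to $\Phi_\varepsilon(Z(t\wedge\tau_R))$; the hypotheses on the weak solutions, combined with the bounds below, make every integrand integrable for each fixed $\varepsilon$. Three contributions arise. The $\sigma\,\mathrm dW$-part is a genuine martingale on $[0,\tau_R]$ (since $|\nabla\Phi_\varepsilon|\leq 2/\varepsilon$ and $\sigma$ is locally $L^{2p}$) and vanishes in expectation. For the drift, Lemma \ref{SDE.PU.lemma.MRL.estimate} with $L = \varepsilon^{-1}$ together with the elementary bound $\sup_{z\in\rd}\tfrac{2|z|(|z|+\varepsilon)}{\varepsilon^2+|z|^2} \leq 3$ gives
\begin{equation*}
|\nabla \Phi_\varepsilon(Z(s))\cdot(b(s,X(s))-b(s,Y(s)))| \leq 3C_d\big(h_R^\varepsilon(s,X(s))+h_R^\varepsilon(s,Y(s))\big),
\end{equation*}
where $h_R^\varepsilon(s,\cdot) := |b(s,\cdot)| + \M^{R+2}_{\varepsilon^{-1}}(D_x b(s,\cdot))$. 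For the quadratic variation, $|D^2\Phi_\varepsilon(z)| \lesssim (\varepsilon^2+|z|^2)^{-1}$ combined with \eqref{SDE.PU.oneSidedLipschitz} yields the pointwise bound $C(f_R(s,X(s))+f_R(s,Y(s)))$.

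Taking expectations, using $\law{X(s)}, \law{Y(s)}$ absolutely continuous with densities $u_X, u_Y\in L^{p',\mathrm{loc}}$, and combining H\"older's inequality (with exponents $p,p'$) applied to $b\in L^p_{\mathrm{loc}}$ and $f_R\in L^p$ with the crucial hypothesis \eqref{SDE.PU.newMaximalFunction.condition} at $\gamma = 0$, produces, for every $\varepsilon<\varepsilon_0$,
\begin{equation*}
\E[\Phi_\varepsilon(Z(t\wedge\tau_R))] \leq C_1(T,R) + C_2(T,R)\,\frac{|\ln\varepsilon|}{\varepsilon\,\phi_R(\varepsilon^{-1})}.
\end{equation*}
Chebyshev's inequality then gives, for each $\delta>0$,
$\PP(|Z(t\wedge\tau_R)|>\delta) \leq \E[\Phi_\varepsilon(Z(t\wedge\tau_R))]/\ln(1+\delta^2/\varepsilon^2).$
Writing $r := \varepsilon^{-1}$, the numerator is $O(r\ln r/\phi_R(r))$ (with prefactor independent of $\delta$) while the denominator behaves like $2\ln r$ as $\varepsilon\downarrow 0$, so the ratio is of order $r/\phi_R(r)\to 0$ by super-linearity of $\phi_R$. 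Hence $\PP(|Z(t\wedge\tau_R)|>\delta)=0$ for every $\delta>0$, giving $Z(t\wedge\tau_R)=0$ $\PP$-a.s.; letting $R\to\infty$ and exploiting continuity of paths yields $\sup_{t\in[0,T]}|Z(t)|=0$ $\PP$-a.s.

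The main obstacle is the delicate interplay of the three scales involved. Lemma \ref{SDE.PU.lemma.MRL.estimate} forces a linear error term of size $L^{-1}=\varepsilon$, and $\Phi_\varepsilon$ must be tuned to the same $\varepsilon$ so that this error is absorbed by the $\nabla \Phi_\varepsilon$-weight; meanwhile the bound $\tfrac{|\ln \varepsilon|}{\varepsilon\phi_R(\varepsilon^{-1})}$ in \eqref{SDE.PU.newMaximalFunction.condition} is itself unbounded as $\varepsilon\downarrow 0$ and only becomes useful when divided by the logarithmic weight $\ln(1+\delta^2/\varepsilon^2)$ coming from Chebyshev. It is precisely here that super-linearity of $\phi_R$ (i.e.\ $\phi_R(r)/r\to \infty$) is indispensable, while the \emph{modest growth} property $\phi_R(r)/(r\ln r)\to 0$ governs whether the assumed bound is achievable in the applications of Theorem \ref{SDE.PU.theorem.mainresult.abstract}.
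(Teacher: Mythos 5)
Your approach is a genuinely different route from the paper's. The paper applies It\^o's formula to a smooth cutoff $L_\varepsilon$ that is $1$ outside $B_\varepsilon(0)$ and $0$ on $B_{\varepsilon/2}(0)$, so that $\E[L_\varepsilon(Z)]\geq\PP(|Z|>\varepsilon)$; the resulting bound on the drift term carries the $\varepsilon$-dependent weight $h_R^\varepsilon$ and forces a rather delicate dyadic discretization (the partition $I_i=[a_i,c_i)$ with $a_i=c_i^2$, averaging over the dyadic scales $k\in J_i$, extracting a subsequence $n_i$) in order to show $\bar I_2^{\delta,2^{-n_i}}\to 0$. Your choice of $\Phi_\varepsilon(z)=\ln(1+|z|^2/\varepsilon^2)$ replaces this with a single Chebyshev estimate: the additive blow-up $\frac{|\ln\varepsilon|}{\varepsilon\phi_R(\varepsilon^{-1})}$ coming from \eqref{SDE.PU.newMaximalFunction.condition} is divided by $\ln(1+\delta^2/\varepsilon^2)\sim 2|\ln\varepsilon|$, leaving $\sim r/\phi_R(r)\to 0$ directly by super-linearity. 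This sidesteps the partition/subsequence argument entirely and is cleaner; the only thing it costs is not exhibiting an explicit subsequence, which is irrelevant for the conclusion. You also correctly identify where super-linearity is indispensable and what modest growth is for.

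However, there is a genuine gap in how you pass from the pointwise-a.e.\ hypotheses to bounds along the paths. Both \eqref{SDE.PU.oneSidedLipschitz} and the Lipschitz-type estimate \eqref{SDE.PU.lemma.MRL.estimate:ineq} hold only for \emph{a.e.} pairs $(x,y)\in B_R(0)^2$ (and a.e.\ $t$). You then plug in $(x,y)=(X(s),Y(s))$. But the \emph{joint} law of $(X(s),Y(s))$ on $\rd\times\rd$ is not absolutely continuous (indeed, if the theorem is true it concentrates on the diagonal, a Lebesgue-null set), so a Lebesgue-null exceptional set in $B_R(0)^2$ may carry the entire joint law, and the a.e.\ inequality does not transfer. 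What you know is only that each \emph{marginal} density $u_X(t,\cdot),u_Y(t,\cdot)$ is in $L^{p'}_{\mathrm{loc}}$. This is exactly why the paper mollifies: replacing $b,\sigma$ by $b^\delta=b\ast\varphi_\delta,\ \sigma^\delta=\sigma\ast\varphi_\delta$ and $f_R,h_R^\varepsilon$ by their mollifications, the two inequalities become valid for \emph{all} $x,y\in B_R(0)$ (by continuity), so composing with the process is legitimate; the mollification errors $I_3^{\delta,\varepsilon},I_4^{\delta,\varepsilon}$ vanish as $\delta\to0$ for fixed $\varepsilon$ by H\"older and the $L^{p'}_{\mathrm{loc}}$-bound on $u_X,u_Y$. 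Your proof omits this step and never uses the $\gamma\neq0$ freedom in \eqref{SDE.PU.newMaximalFunction.condition}, which is precisely what absorbs the convolution $(\M^{R+2}_{\varepsilon^{-1}}(D_xb_t))\ast\varphi_\delta(x)=\int\M^{R+2}_{\varepsilon^{-1}}(D_xb_t)(x-\gamma)\varphi_\delta(\gamma)\,\mathrm d\gamma$. The fix is routine and compatible with your $\Phi_\varepsilon$ strategy (mollify, estimate with $\Phi_\varepsilon$, send $\delta\to0$ for fixed $\varepsilon$ to kill the mollification errors, then run Chebyshev in $\varepsilon$), but as written the proof has a hole.
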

The following proof is a modification of the proof of \cite[Theorem 2.13 (ii)]{champagnat2018strong} 
and it employs the stopping time techniques used in the proof of \cite[Theorem 1.1]{roeckner2010weakuniqueness}.
\begin{proof}[Proof of Theorem \ref{SDE.PU.theorem.mainresult.abstract}]
For the convenience of the reader, we will write $b_t(x):=b(t,x)$, $\sigma_t(x):=\sigma(t,x)$, and $f_{R,t}(x):=f_R(t,x)$, for $(t,x) \in [0,T]\times\rd, R>0$.

For each $\varepsilon > 0$, let $L_\varepsilon \in C^\infty(\RR^d)$ with $0\leq L_\varepsilon\leq 1$ such that
\begin{align}\label{PU_cutoffL}
L_\varepsilon = 1 \text{ in } B_\varepsilon(0)^\complement, L_\varepsilon = 0 \text{ in } B_{\nicefrac{\varepsilon}{2}}(0) \text{ and } \varepsilon\norm{\nabla L_\varepsilon}_{L^\infty(\rd)} + \varepsilon^2\norm{D^2L_\varepsilon}_{L^\infty(\rd)} \leq C,
\end{align}
for some constant $C>0$ independent of $\varepsilon$. 
Furthermore, let $\varphi \in C_c^\infty(\rd), \varphi \geq 0$ with $\int_\rd \varphi\   \mathrm{d}x =1$ and $\text{supp}\ \varphi \subseteq B_1(0)$. We define the usual Dirac sequence with respect to $\varphi$ via
$\varphi_\delta := \delta^{-d} \varphi(\nicefrac{\cdot}{\delta})$, $\delta \in (0,1)$.

Let $R>0$ and let us fix an arbitrary $\varepsilon<\varepsilon_0$.
Furthermore, let $${\tau_R(\omega):= \inf\{t \in [0,T] : \max\{|X_t(\omega)|,|Y_t(\omega)|\} \geq R\}},\ \ \omega \in \Omega,$$ where we use the convention $\inf \emptyset = \infty$. Note that $\tau_R$ is an $(\FF_t)$-stopping time.
By It\^o's formula, we have
\begin{align}\label{SDE.PU.theorem.mainresult.abstract:ito}
\E&[L_\varepsilon(X_{t\wedge\tau_R}-Y_{t\wedge\tau_R})]\nonumber\\
&= \frac{1}{2} \E\int_0^{t\wedge\tau_R} \mathrm{tr}\{D^2 L_\varepsilon(X_s-Y_s)(\sigma_s(X_s)-\sigma_s(Y_s))(\sigma_s(X_s)-\sigma_s(Y_s))^*\}\ \mathrm{d}s \nonumber \\ 
 &\ \ \ + \E\int_0^{t\wedge\tau_R} \nabla L_\varepsilon(X_s-Y_s) \cdot (b_s(X_s)-b_s(Y_s))\ \mathrm{d}s \nonumber \\ 
 &\ \ \ + \E\int_0^{t\wedge\tau_R} (\nabla L_\varepsilon(X_s-Y_s))^* (\sigma_s(X_s)-\sigma_s(Y_s))\ \mathrm{d}W_s,
\end{align}
where tr$\{\cdot\}$ denotes the usual trace of a square matrix.
Clearly, the last summand in \eqref{SDE.PU.theorem.mainresult.abstract:ito} vanishes.
Let $\delta\in (0,1)$. By the estimates on the derivatives of $L_\varepsilon$ (see \eqref{PU_cutoffL}), we know that
\begin{align}\label{SDE.PU.theorem.mainresult.abstract:ito2}
&\E[L_\varepsilon(X_{t\wedge\tau_R}-Y_{t\wedge\tau_R})]  
\leq C \E\int_0^{t\wedge\tau_R} \mathbbm{1}_{\frac{\varepsilon}{2} \leq |X_s-Y_s|\leq\varepsilon} \bigg( \frac{|\sigma^\delta_s(X_s)-\sigma^\delta_s(Y_s)|^2}{\varepsilon^2} \nonumber \\
&+ \frac{|b^\delta_s(X_s)-b^\delta_s(Y_s)|}{\varepsilon}+  \frac{|\sigma^\delta_s(X_s)-\sigma_s(X_s)|^2}{\varepsilon^2} + \frac{|\sigma^\delta_s(Y_s)-\sigma_s(Y_s)|^2}{\varepsilon^2} \nonumber\\
&+\frac{|b^\delta_s(X_s)-b_s(X_s)|}{\varepsilon}+ \frac{|b^\delta_s(Y_s)-b_s(Y_s)|}{\varepsilon}\bigg) \ \mathrm{d}s,
\end{align}
where $\sigma^\delta_t:=\sigma_t\ast\varphi_\delta$, $b^\delta_t:=b_t\ast\varphi_\delta$, $t\in [0,T]$. Here, the convolution is meant componentwise in the functions' $x$-variable. 

In the following, we will further estimate the right-hand side of  \eqref{SDE.PU.theorem.mainresult.abstract:ito2} by employing Lemma \ref{SDE.PU.lemma.MRL.estimate}, and show that there is a sequence of natural numbers $(n_i)_{i\in\LN}$, such that
\begin{align}\label{SDE.PU.theorem.mainresult.abstract:1}
\sup_{t\in[0,T]}\E[L_{2^{-n_i}}(X_{t\wedge\tau_R}-Y_{t\wedge\tau_R})] \to 0, \text{ as } i\to+\infty.
\end{align}
From there, we will then argue that $X=Y$ $\PP$-a.s.

Let us set  ${h}_{R}^\varepsilon(t,x):={h}_{R,t}^\varepsilon(x) := |b_t(x)| + M^{R+2}_{\varepsilon\inv}(D b_t)(x), (t,x)\in [0,T]\times\rd$.
As above, let us set $({h}_{R,t}^\varepsilon)^\delta := {h}_{R,t}^\varepsilon\ast \varphi_\delta$, as well as $(f_{R,t})^\delta := f_{R,t}\ast \varphi_\delta$.
Using Lemma \ref{SDE.PU.lemma.MRL.estimate} and \eqref{SDE.PU.oneSidedLipschitz}, respectively, it is an easy direct calculation to see that, for almost every $t\in [0,T]$, and \textit{all} $x,y\in B_R(0)$,
\begin{align*}
	|\sigma^\delta_t(x)-\sigma^\delta_t(y)|^2 \leq ((f_{R+1,t})^\delta(x) + (f_{R+1,t})^\delta(y))|x-y|^2,
\end{align*}
and
\begin{align*}
	|b^\delta_t(x)-b^\delta_t(y)| \leq C_d \big(({h}_{R+1,t}^\varepsilon)^\delta(x)+({h}_{R+1,t}^\varepsilon)^\delta(y)\big) (|x-y| + \varepsilon).
\end{align*}
Here, $C_d>0$ is the same constant as in Lemma \ref{SDE.PU.lemma.MRL.estimate}.
Applying these inequalities, we further estimate for $\tilde{C}_d:= \max\{C,2CC_d\}$
\begin{align}\label{SDE.PU.theorem.mainresult.abstract:2}
\E[L_\varepsilon &(X_{t\wedge\tau_R}-Y_{t\wedge\tau_R})] \\ 
\leq\ & \tilde{C}_d \E\int_0^{T\wedge\tau_R} \mathbbm{1}_{\frac{\varepsilon}{2} \leq |X_s-Y_s|\leq\varepsilon}\ \left((f_{R+1,s})^\delta(X_s) + (f_{R+1,s})^\delta(Y_s)\right) \ \mathrm{d}s \nonumber\\
&+ \tilde{C}_d \E\int_0^{T\wedge\tau_R} \mathbbm{1}_{\frac{\varepsilon}{2} \leq |X_s-Y_s|\leq\varepsilon}\ 
\left(({h}_{R+1,s}^\varepsilon)^{\delta}(X_s) + ({h}_{R+1,s}^\varepsilon)^{\delta}(Y_s)\right)\ \mathrm{d}s \nonumber\\
&+ \tilde{C}_d \E \int_0^{T\wedge\tau_R} \mathbbm{1}_{\frac{\varepsilon}{2} \leq |X_s-Y_s|\leq\varepsilon}\ \left(\frac{|b^\delta_s(X_s)-b_s(X_s)|}{\varepsilon} + \frac{|b^\delta_s(Y_s)-b_s(Y_s)|}{\varepsilon}\right)\ \mathrm{d}s \nonumber \\
&+ \tilde{C}_d \E \int_0^{T\wedge\tau_R} \mathbbm{1}_{\frac{\varepsilon}{2} \leq |X_s-Y_s|\leq\varepsilon}\ \left(\frac{|\sigma^\delta_s(X_s)-\sigma_s(X_s)|^2}{\varepsilon^2} + \frac{|\sigma^\delta_s(Y_s)-\sigma_s(Y_s)|^2}{\varepsilon^2}\right)\ \mathrm{d}s \nonumber \\
=: &\ \tilde{C}_d\left(I_1^{\delta,\varepsilon} + I_2^{\delta,\varepsilon} + I_3^{\delta,\varepsilon} + I_4^{\delta,\varepsilon}\right).\nonumber
\end{align}
Let us first regard the summands $ I_3^{\delta,\varepsilon}$ and $I_4^{\delta,\varepsilon}$.
Note that 
$b^\delta \to b$ in $L^p([0,T]\times B_R(0);\rd)$ and $\sigma^\delta \to \sigma$ in $L^{2p}([0,T]\times B_R(0);\RR^{d\times d_1})$, whenever $\delta\to 0$.
Hence, by H\"older's inequality
\begin{align*}
	I_3^{\delta,\varepsilon} + I_4^{\delta,\varepsilon}
	&\leq\varepsilon^{-1}\norm[{L^p([0,T]\times B_R(0);\rd)}]{b^\delta - b} \left(\norm[{L^{p'}([0,T]\times B_R(0))}]{u_X}+\norm[{L^{p'}([0,T]\times B_R(0))}]{u_Y}\right) \\
	&+ \varepsilon^{-2}\norm[{L^{2p}([0,T]\times B_R(0);\RR^{d\times d_1})}]{\sigma^\delta - \sigma} \left(\norm[{L^{p'}([0,T]\times B_R(0))}]{u_X}+\norm[{L^{p'}([0,T]\times B_R(0))}]{u_Y}\right) \\
	&\to 0, \text{ as $\delta \to 0$.}
\end{align*}
It will be fairly easy to argue that $\lim_{\varepsilon\to 0}\liminf_{\delta\to 0} I_1^{\delta,\varepsilon}=0$. 
The summand $I_2^{\delta,\varepsilon}$, however, needs to be treated more carefully, since ${h}_\varepsilon^R$ depends on the parameter $\varepsilon$.
Therefore, we will argue that ${I}_2^{\delta,\varepsilon}$ can be replaced by $\bar{I}_2^{\delta,\varepsilon}$, where the latter incorporates a discretised modification $\bar{h}_R^\varepsilon$ of ${h}_R^\varepsilon$ in the variable $\varepsilon$. Afterwards, we show that for a suitable subsequence $(n_i)_{i\in \LN}$, $\lim_{i\to \infty}\liminf_{\delta\to 0} \bar{I}^{\delta,2^{-n_i}}_2 = 0$. 

We partition the open unit interval $(0,1) \subseteq \RR$ into pairwise disjoint half-open intervals $I_i$, $i\in \LN$, as follows. 
Let $I_0 := [\nicefrac{1}{2},1)$ and  define for each $i \in \LN$, $I_i := [a_i, c_i)$ with $c_{i} = a_{i-1}$ and $a_i = c_i^2$, starting with $a_0 := \nicefrac{1}{2}$. 
In particular, for all $i \in \LN$ we have
\begin{align*}
\ln(a_i\inv) = 2\ln(c_i\inv).
\end{align*}
Recall that by the isotonicity of $r\mapsto \nicefrac{\phi_R(r)}{r}$, we have
\begin{equation*}
\frac{\phi_R(c_i\inv)}{c_i\inv} \leq \frac{\phi_R(a_i\inv)}{a_i\inv}.
\end{equation*}
For each $i\in\LN$ and $\varepsilon \in I_i$, we set $\bar{h}^\varepsilon_{R,t} := {h}^{a_i}_{R,t}, t \in [0,T]$.
Then, by \eqref{SDE.PU.newMaximalFunction.condition},
we can find a constant $\bar{C}_{T,R}>0$ such that for $i \in \LN$ large enough that $\nicefrac{\phi_R(a_i\inv)a_i}{|\ln(a_i)|}\leq 1$ and $a_i \leq \varepsilon_0$
\begin{align}\label{SDE.PU.theorem.mainresult.abstract:3}
\int_0^T \int_{B_R(0)} (\bar{h}^\varepsilon_{R,t})^\delta(x)\ (u_X(t,x) + u_Y(t,x))\ \mathrm{d}x\mathrm{d}t
&\leq \bar{C}_{T,R} \frac{|\ln(a_i)|}{a_i \phi_R(a_i\inv)} \\
&\leq \bar{C}^{}_{T,R} \frac{|\ln(a_i)|}{c_i \phi_R(c_i\inv)}
= 2\bar{C}^{}_{T,R} \frac{|\ln(c_i)|}{c_i \phi_R(c_i\inv)}.\nonumber
\end{align}
Since for each $\varepsilon \in I_i$ we have $a_i \leq \varepsilon$, \eqref{SDE.PU.theorem.mainresult.abstract:2} can be similarly obtained in terms of $\bar{h}_{R,t}^\varepsilon$ instead of ${h}_{R,t}^\varepsilon$. To be more precise, we can estimate
\begin{align}\label{SDE.PU.theorem.mainresult.abstract:4}
\E[L_\varepsilon &(X_{t\wedge\tau_R}-Y_{t\wedge\tau_R})] \\ 
\leq\ & \tilde{C}_d \E\int_0^{T\wedge\tau_R} \mathbbm{1}_{\frac{\varepsilon}{2} \leq |X_s-Y_s|\leq\varepsilon}\ \left((f_{R+1,s})^\delta(X_s) + (f_{R+1,s})^\delta(Y_s)\right) \ \mathrm{d}s \nonumber\\
&+ \tilde{C}_d \E\int_0^{T\wedge\tau_R} \mathbbm{1}_{\frac{\varepsilon}{2} \leq |X_s-Y_s|\leq\varepsilon}\ 
\left((\bar{h}_{R+1,s}^\varepsilon)^{\delta}(X_s) + (\bar{h}_{R+1,s}^\varepsilon)^{\delta}(Y_s)\right)\ \mathrm{d}s \nonumber\\
&+ \tilde{C}_d \E \int_0^{T\wedge\tau_R} \mathbbm{1}_{\frac{\varepsilon}{2} \leq |X_s-Y_s|\leq\varepsilon}\ \left(\frac{|b^\delta_s(X_s)-b_s(X_s)|}{\varepsilon} + \frac{|b^\delta_s(Y_s)-b_s(Y_s)|}{\varepsilon}\right)\ \mathrm{d}s \nonumber \\
&+ \tilde{C}_d \E \int_0^{T\wedge\tau_R} \mathbbm{1}_{\frac{\varepsilon}{2} \leq |X_s-Y_s|\leq\varepsilon}\ \left(\frac{|\sigma^\delta_s(X_s)-\sigma_s(X_s)|^2}{\varepsilon^2} + \frac{|\sigma^\delta_s(Y_s)-\sigma_s(Y_s)|^2}{\varepsilon^2}\right)\ \mathrm{d}s \nonumber \\
&=: \tilde{C}_d (I_1^{\delta,\varepsilon} + \bar{I}_2^{\delta,\varepsilon} + I_3^{\delta,\varepsilon} + I_4^{\delta,\varepsilon}).\nonumber
\end{align}
Substituting $\varepsilon = 2^{-k}$, $k\in \LN$, in $I_1^{\delta,\varepsilon}$ we obtain by Fatou's lemma and H\"older's inequality
\begin{align}\label{SDE.PU.theorem.mainresult.abstract:5}
\sum_{k\in\LN} \liminf_{\delta \to 0}I_1^{\delta,2^{-k}} 
&\leq
\liminf_{\delta \to 0}\int_0^{T} \int_{B_R(0)} (f_{R+1,t})^\delta(x)\ (u_X(t,x) + u_Y(t,x))\ \mathrm{d}x\mathrm{d}t  \\
&\leq \norm[{L^{p}([0,T]\times B_{R+1}(0))}]{f_{R+1}}(\norm[{L^{p'}([0,T]\times B_R(0))}]{u_X}+\norm[{L^{p'}([0,T]\times B_R(0))}]{u_Y}),\notag
\end{align}
where the right-hand side is finite due to the integrability assumptions on $f_{R+1}$, $u_X$ and $u_Y$. It is immediate to see that this implies $\lim_{k\to \infty }\liminf_{\delta \to 0}I_1^{\delta,2^{-k}}=0$.
Regarding $\bar{I}_2^{\delta,\varepsilon}$, we now define $J_i := \{ k \in \LN : [2^{-(k+1)},2^{-k}) \subseteq I_i \}$ for each $i \in \LN$. By Remark \ref{SDE.PU.theorem.mainresult.abstract:remark} there exists a constant $C_J > 0$ independent of $i$, such that $|J_i| = C_J\inv |\ln(c_i)|$. By \eqref{SDE.PU.theorem.mainresult.abstract:3} and Fatou's lemma, we have for $i\in\LN$ large enough such that $\nicefrac{\phi(c_i\inv)c_i}{|\ln(c_i)|}\leq 1$ and $c_i \leq \varepsilon_0$
\begin{align*}
\frac{1}{|J_i|} \sum_{k\in J_i} \liminf_{\delta \to 0}\bar{I}_2^{\delta,2^{-k}}
&\leq \liminf_{\delta \to 0}\frac{1}{|J_i|} \bigg( \int_0^T \int_{B_R(0)} (\bar{h}_{R+1,s}^{a_i})^{\delta}(x)\ (u_X(s,x) + u_Y(s,x))\ \mathrm{d}x\mathrm{d}s \\
&\ \ \ \ + \int_0^T \int_{B_R(0)} (\bar{h}_{R+1,s}^{c_i})^{\delta}(x)\ (u_X(s,x) + u_Y(s,x))\ \mathrm{d}x\mathrm{d}s \bigg) \\
&\leq \frac{3\bar{C}_{T,R}C_J}{c_i\phi_R(c_i\inv)} \to 0 \text{, as } i \to \infty.
\end{align*}
Now for each $i\in\LN$ we set $n_i := \text{argmin}_{k \in J_i} (\liminf_{\delta\to 0} \bar{I}_2^{\delta,2^{-k}})$. It follows immediately that \\$\lim_{i\to \infty}\liminf_{\delta\to 0}\bar{I}_2^{\delta,2^{-n_i}} = 0$. 
Consequently, we have proved \eqref{SDE.PU.theorem.mainresult.abstract:1} for the sequence $(n_i)_{i\in\LN}$. 
Furthermore, by definition of the family $(L_\varepsilon)_{\varepsilon>0}$, we have
\begin{align}\label{SDE.PU.theorem.mainresult.abstract:6}
\E [L_{\varepsilon}(X_{t\wedge \tau_R}-Y_{t\wedge \tau_R})] \geq \PP(\{|X_{t\wedge \tau_R}-Y_{t\wedge \tau_R}| > \varepsilon\}).
\end{align}
Since $n_i \to \infty$, as $i\to\infty$, we have that $\{|X_{t\wedge \tau_R}-Y_{t\wedge \tau_R}| > 2^{-n_i}\} \uparrow_{i\in\LN} \{|X_{t\wedge \tau_R}-Y_{t\wedge \tau_R}| > 0\}$.
So, combining \eqref{SDE.PU.theorem.mainresult.abstract:1} with \eqref{SDE.PU.theorem.mainresult.abstract:6}, we then deduce (by the continuity of measures from below)
\begin{align*}
\PP(\{|X_{t\wedge \tau_R}-Y_{t\wedge \tau_R}| > 0\}) = 0\ \ \forall t\in [0,T].
\end{align*}
Since $X$ and $Y$ have $\PP$-a.s. continuous paths, we obtain $\tau_R \to T$ $\PP$-a.s., as  $R\to \infty$.
We therefore conclude that $\PP$-a.s.
\begin{align*}
	X_t = Y_t\ \ \ \forall t\in [0,T].
\end{align*}
This concludes the proof.
\end{proof}
\begin{remark}\label{SDE.PU.theorem.mainresult.abstract:remark}
Let us consider the partition $(I_i)_{i \in \LN}$ of the unit interval $(0,1)$ as above with $I_i=[a_i,c_i), i \in \LN_0$. Fix $i \in \LN$.
Let $k_0 \in \LN$ such that $2^{-k_0} = c_i$, which is equivalent to $k_0 = \log_2(c_i\inv)$.
 Then $|J_i|$ is given through the equation
\begin{align*}
2^{-k_0 - |J_i|} = a_i.
\end{align*}
An easy calculation consequently yields $|J_i| = \frac{|\ln(c_i)|}{\ln(2)}$.
\end{remark}
\begin{proposition}\label{SDE.PU.proposition.SobolevMLR}
Let $p\in [1,\infty), p'\in (1,\infty]$ such that
	$\nicefrac{1}{p}+\nicefrac{1}{p'}=1$.
Let $b \in L^1_{\mathrm{loc}}([0,T]\times\rd;\RR^n)$ such that $D_x b \in L^p_{\mathrm{loc}}([0,T]\times\rd;\RR^{n\times d})$ and $v \in L^{p'}_{\mathrm{loc}}([0,T]\times\rd)$.

	Then for every $R>0$ there exists a function $\phi_R:(1,\infty) \to [0,\infty)$ which is super-linear of modest growth, a constant $C_{T,R}>0$ such that for all $\varepsilon \in (0,1)$ small enough and all $\gamma \in \rd$
	\begin{align}\label{}
		\int_0^T\int_{B_R(0)} \M^{R}_{\varepsilon\inv}(D b_t)(x-\gamma) v_t(x)\ \mathrm{d}x\mathrm{d}t \leq C_{T,R}\frac{|\ln(\varepsilon)|}{\varepsilon\phi_R(\varepsilon\inv) }.
	\end{align}
\end{proposition}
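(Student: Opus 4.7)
The plan is to split the defining sum for $\M^{R}_{\varepsilon^{-1}}(Db_t)$ into its constant part $\sqrt{|\ln\varepsilon|}$ and the kernel part (which is absolutely continuous since $Db\in L^p_{\mathrm{loc}}$, so no singular contribution appears). The constant part at once contributes $\sqrt{|\ln\varepsilon|}\,\|v\|_{L^1([0,T]\times B_R(0))}\leq C_{T,R}\sqrt{|\ln\varepsilon|}$. For the kernel part, Fubini and the substitution $y=x-\gamma$ rewrite it as
\[\int_0^T\int_{B_R(0)} F_t(z)\,K_t^*(z)\,\mathrm{d}z\,\mathrm{d}t,\qquad K_t^*(z):=\int_{\rd}\frac{W(y)}{(\varepsilon+|y-z|)|y-z|^{d-1}}\,\mathrm{d}y,\]
where $F_t(z):=|Db_t(z)|\mathbbm{1}_{|Db_t(z)|\geq\sqrt{|\ln\varepsilon|}}$ and $W(y):=v_t(y+\gamma)\mathbbm{1}_{B_R(0)}(y+\gamma)$. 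The crucial observation is that $\|W\|_{L^{p'}(\rd)}=\|v_t\|_{L^{p'}(B_R(0))}$ is independent of $\gamma$.

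\textbf{Uniform pointwise estimate of $K_t^*$.}
The next step is to bound $K_t^*(z)$ for $z\in B_R(0)$ uniformly in $\gamma\in\rd$. For $|\gamma|\leq 4R$, the support of $W$ lies in $B_{3R}(0)$; splitting $K_t^*(z)$ at $|y-z|=\varepsilon$ and performing a dyadic decomposition $2^k\varepsilon\leq|y-z|<2^{k+1}\varepsilon$ outside, combined with the standard estimate $\int_{B_r(z)}|g(y)||y-z|^{-(d-1)}\,\mathrm{d}y\leq Cr\,\M g(z)$, yields the Champagnat--Jabin-type inequality $K_t^*(z)\leq C_R(1+|\ln\varepsilon|)\M W(z)$, since only $O(|\ln\varepsilon|)$ rings meet $\mathrm{supp}(W)$. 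For $|\gamma|>4R$, the trivial bound $|y-z|\geq|\gamma|-2R>2R$ gives instead $K_t^*(z)\leq (2R)^{-d}\|W\|_{L^1(\rd)}\leq C_R\|v_t\|_{L^{p'}(B_R(0))}$ uniformly. Applying H\"older's inequality in $z$ and then in $t$, together with the $L^{p'}\to L^{p'}$-boundedness of $\M$ on $\rd$ (valid for all $p'\in(1,\infty]$), gives in both regimes
\[\int_0^T\int_{B_R(0)}F_t(z)K_t^*(z)\,\mathrm{d}z\,\mathrm{d}t\leq C_{T,R}(1+|\ln\varepsilon|)\,\eta_R(\varepsilon),\qquad \eta_R(\varepsilon):=\|F\|_{L^p([0,T]\times B_R(0))}.\]

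\textbf{Construction of $\phi_R$ and main obstacle.}
Since $|Db|^p\in L^1([0,T]\times B_R(0))$, dominated convergence yields $\eta_R(\varepsilon)\to 0$ as $\varepsilon\to 0$, and $\eta_R$ is monotone non-decreasing in $\varepsilon$. Setting (with the convention $1/0:=+\infty$)
\[\psi(r):=\min\{\sqrt{\ln r},\,1/\eta_R(1/r)\},\qquad \phi_R(r):=r\psi(r),\]
one obtains a super-linear function of modest growth in the sense of Definition \ref{definition.superlinearity}: indeed $\psi$ is non-decreasing, diverges to $\infty$, and satisfies $\psi(r)/\ln r\leq 1/\sqrt{\ln r}\to 0$. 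By construction $\psi(\varepsilon^{-1})\leq\sqrt{|\ln\varepsilon|}$ and $\psi(\varepsilon^{-1})\eta_R(\varepsilon)\leq 1$, so that for $\varepsilon$ small enough
\[C_{T,R}\bigl(\sqrt{|\ln\varepsilon|}+(1+|\ln\varepsilon|)\eta_R(\varepsilon)\bigr)\leq 4C_{T,R}\frac{|\ln\varepsilon|}{\psi(\varepsilon^{-1})}=4C_{T,R}\frac{|\ln\varepsilon|}{\varepsilon\phi_R(\varepsilon^{-1})}.\]
The main obstacle lies in marrying two rather different gains: the $\gamma$-uniformity of the kernel estimate (resolved by the regime split at $|\gamma|=4R$), and the selection of $\phi_R$ accommodating both the $\sqrt{|\ln\varepsilon|}$-term and the $|\ln\varepsilon|\eta_R(\varepsilon)$-term simultaneously. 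It is precisely the threshold $\sqrt{|\ln\varepsilon|}$ built into the definition of $\M^R_L$ that forces $\eta_R(\varepsilon)\to 0$ and hence admits a $\psi$ with $\psi(r)/\ln r\to 0$ yet $\psi\to\infty$.
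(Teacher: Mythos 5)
Your proof is correct and reaches the paper's conclusion, but it follows a genuinely different route in two places. For the kernel part, the paper applies Young's inequality for convolutions, splitting the kernel $x\mapsto((\varepsilon+|x|)|x|^{d-1})^{-1}$ into $B_1(0)$ and its complement -- yielding $w_d\ln(1+\varepsilon^{-1})$ from the $L^1(B_1(0))$ piece and a constant $C_d$ from the $L^{p'}(B_1(0)^\complement)$ piece -- and pairs these against $\|v\|_{L^{p'}}$ and $\|v\|_{L^1}$ respectively. You instead prove a pointwise bound $K_t^*(z)\leq C_R(1+|\ln\varepsilon|)\,\M W(z)$ by dyadic decomposition and then invoke the $L^{p'}\to L^{p'}$ boundedness of the Hardy--Littlewood maximal operator; this requires the maximal theorem (legitimate since $p'>1$ and the maximal function acts on the $v$-side, not the $Db$-side), whereas the paper uses only elementary convolution estimates, but both reach the same $|\ln\varepsilon|\cdot\|Db\,\mathbbm{1}_{\{|Db|\geq\sqrt{|\ln\varepsilon|}\}}\|_{L^p}$-type bound uniform in $\gamma$. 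For the construction of $\phi_R$, the paper invokes the de la Vall\'ee--Poussin theorem to produce a convex function $G_R$ controlling the tail of $|Db|^p$ and then solves an explicit equality for $\phi_R$ in terms of $G_R$; you work directly with the tail quantity $\eta_R(\varepsilon)=\|Db\,\mathbbm{1}_{\{|Db|\geq\sqrt{|\ln\varepsilon|}\}}\|_{L^p}$, use only its monotonicity and its decay to zero (dominated convergence), and set $\phi_R(r)=r\min\{\sqrt{\ln r},\,1/\eta_R(1/r)\}$. This is a genuine simplification: it removes de la Vall\'ee--Poussin from the argument at no cost, and the verification that $\phi_R$ is super-linear of modest growth (Definition \ref{definition.superlinearity}) is immediate from the minimum structure. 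A minor slip: for $|\gamma|\leq 4R$ the support of $W$ lies in $B_{5R}(0)$, not $B_{3R}(0)$; this has no bearing on the argument since the only point used is that the support is contained in a ball of radius comparable to $R$.
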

\begin{proof}
	Let us fix $R>0$, $\gamma \in \rd$ and let $l_\gamma(x) := x-\gamma$, $x\in \rd$. We set $C_v^q:=\norm[{L^q([0,T]\times B_R(0))}]{v}, q\in [1,p']$.
	Without loss of generality, we assume $C_v^1\in (0,\infty)$. We have the following.
	\begin{align*}
		&\int_0^T \int_{B_R(0)} \M^R_{\varepsilon\inv}(D b_t)(l_\gamma(x)) v_t(x)\ \mathrm{d}x\mathrm{d}t\\
		&= C_v^1\sqrt{|\ln(\varepsilon)|} + \int_0^T\int_{B_R(0)} \int_{B_R(0)}  \frac{|D b_t(z)|\mathbbm{1}_{|D b_t| \geq \sqrt{|\ln(\varepsilon)|}}(z)}{(\varepsilon + |l_\gamma(x)-z|)|l_\gamma(x)-z|^{d-1}}\ \mathrm{d}z\ v_t(x)\ \mathrm{d}x\mathrm{d}t\\
		&= C_v^1\sqrt{|\ln(\varepsilon)|} + \int_0^T\int_{B_R(0)} \int_{B_R(0)}  \frac{|D b_t(z)|\mathbbm{1}_{|D b_t| \geq \sqrt{|\ln(\varepsilon)|}}(z)}{(\varepsilon + |l_\gamma(x)-z|)|l_\gamma(x)-z|^{d-1}}v_t(x)\ \mathrm{d}x\mathrm{d}z\mathrm{d}t\\
		&= C_v^1\sqrt{|\ln(\varepsilon)|}\\
		 &\ \ \ + \int_0^T\int \int \mathbbm{1}_{B_R(0)}(z-l_\gamma(x)) v_t(z-l_\gamma(x)) \mathbbm{1}_{B_R(0)}(z)\frac{|D b_t(z)|\mathbbm{1}_{|D b_t| \geq \sqrt{|\ln(\varepsilon)|}}(z)}{(\varepsilon + |x|)|x|^{d-1}}\ \mathrm{d}x \mathrm{d}z \mathrm{d}t \\
		&\leq C_v^1\sqrt{|\ln(\varepsilon)|} \\
		&\ \ \ + C_v^{p'}\norm[{L^{p}([0,T]\times B_R(0))}]{ |D_x b|\mathbbm{1}_{|D_x b| \geq \sqrt{|\ln(\varepsilon)|}}}\norm[{L^{1}(B_1(0))}]{\frac{1}{(\varepsilon + |\cdot|)|\cdot|^{d-1}}} \\
		&\ \ \ + C_v^1\norm[{L^{p}([0,T]\times B_R(0))}]{|D_x b|\mathbbm{1}_{|D_x b| \geq \sqrt{|\ln(\varepsilon)|}}}\norm[{L^{p'}( B_1(0)^\complement)}]{\frac{1}{(\varepsilon + |\cdot|)|\cdot|^{d-1}}},
	\end{align*}
	where we used Fubini's theorem in the third line, the general transformation rule for integrals with respect to the $x$-variable in the fourth line.
	For the inequality, we split the integral with respect to the $x$-variable into an integral over $B_1(0)$ and $B_1(0)^\complement$, respectively, and used Young's inequality for convolution in each resulting summand.
	Clearly, there exists a constant $C_{d}>0$ such that
	\begin{align*}
		\norm[L^{p'}(B_1(0)^\complement)]{\frac{1}{(\varepsilon + |\cdot|)|\cdot|^{d-1}}}\leq C_{d},
	\end{align*}
	and
	\begin{align*}
		\norm[{L^{1}(B_1(0))}]{\frac{1}{(\varepsilon + |\cdot|)|\cdot|^{d-1}}} = w_d \ln(1+\varepsilon\inv),
	\end{align*}
	where $w_d(>0)$ denotes the $(d-1)$-dimensional volume of the $(d-1)$-dimensional unit sphere.
	Consequently,
	\begin{align}\label{SDE.PU.estimate.oldConst}
		\int_0^T &\int_{B_R(0)} \M^R_{\varepsilon\inv}(D b_t)(l_\gamma(x)) v_t(x)\ \mathrm{d}x\mathrm{d}t
		\leq C_v^1\sqrt{|\ln(\varepsilon)|} \\
		&+ \norm[{L^{p}([0,T]\times B_R(0))}]{|D_x b|\mathbbm{1}_{|D_x b| \geq \sqrt{|\ln(\varepsilon)|}}}\left(w_d\ln(1+\varepsilon\inv)C_v^{p'}+C_dC_v^1\right).\notag
	\end{align}
By the de la Vall\'ee--Poussin theorem (see, e.g. \cite[Vol. 1, Theorem 4.5.9]{bogachev2007measure}),  there exists a convex function $G_R:[0,\infty)\to [0,\infty)$ with $G_R(0)=0$ such that
\begin{align}\label{SDE.PU.G_R.superlinear}
	\lim_{r\to \infty} \frac{G_R(r)}{r}= \infty,
\end{align}
and
\begin{align*}
	C_{G_R,|D_x b|} :=\left(\int_0^T \int_{B_R(0)} G_R (|Db_t(x)|^p)\ \mathrm{d}x\mathrm{d}t\right)^{\frac{1}{p}} <\infty.
\end{align*}
Now, using that $(0,\infty) \ni r\mapsto \frac{G_R(r)}{r}$ is non-decreasing (which is due to the convexity of $G_R$ and $G_R(0)=0$), we estimate
\begin{align*}
	\frac{G_R(\sqrt{|\ln(\varepsilon)|}^p)^{\frac{1}{p}}}{\sqrt{|\ln(\varepsilon)|}}&\left(\int_0^T\int_{B_R(0)} |Db_t(x)|^p \mathbbm{1}_{|Db_t|\geq \sqrt{|\ln(\varepsilon)|}}(x)\ \mathrm{d}x\mathrm{d}t\right)^\frac{1}{p}\leq C_{G_R,|D_x b|}.
\end{align*}
So, for $\varepsilon$ sufficiently small such that $G_R(\sqrt{|\ln(\varepsilon)|}^p)>0$ we obtain together with \eqref{SDE.PU.estimate.oldConst} that
\begin{align*}
	\int_0^T &\int_{B_R(0)} \M^R_{\varepsilon\inv}(D b_t)(l_\gamma(x)) v_t(x)\ \mathrm{d}x\mathrm{d}t\\
	&\leq C_v^1\sqrt{|\ln(\varepsilon)|} + C_{\text{max}}\frac{\sqrt{|\ln(\varepsilon)|}\left(\ln(\varepsilon\inv+1) + 1\right)}{G_R(\sqrt{|\ln(\varepsilon)|}^p)^{\frac{1}{p}}},
\end{align*}
where $C_{\text{max}}:=C_{G_R,|DF|}\max\left\{w_dC_v^{p'},C_dC_v^1\right\}$.
Let us define the function $\phi_R:(1,\infty)\to [0,\infty)$ via
\begin{align*}
	\frac{r\ln(r)}{\phi_R(r)}:= C_v^{1}\sqrt{\ln(r)} + C_{\text{max}}\frac{\sqrt{\ln(r)}\left(\ln(r+1) + 1\right)}{G_R(\sqrt{\ln(r)}^p)^{\frac{1}{p}}},
	\end{align*}
	$\text{where }
	r\in A_R:=\{s \in (1,\infty) : G_R(\sqrt{\ln(s)}^p)>0\}$,
and $\phi_R := 0$ on $(1,\infty)\backslash A_R$.
Let us note that (for $r \in A_R$)
\begin{align*}
	\frac{\phi_R(r)}{r\ln(r)} \leq \frac{1}{C_v^{1}\sqrt{\ln(r)}} \to 0, \text{ as $r\to \infty$},
\end{align*}
and 
\begin{align*}
	\frac{\phi_R(r)}{r}= \frac{1}{\frac{C_v^{1}}{\sqrt{\ln(r)}}+C_{\text{max}}\frac{\sqrt{\ln(r)}}{G_R(\sqrt{\ln(r)}^p)^{\frac{1}{p}}}\frac{\ln(r+1) +1}{\ln(r)}}.
\end{align*}
Since
\begin{align*}
	(1,\infty)\ni r \mapsto \frac{\ln(r+1) +1}{\ln(r)}
\end{align*}
is decreasing and converging to $1$, as $r\to \infty$,
 $(1,\infty)\ni r\mapsto \frac{\phi_R(r)}{r}$ is increasing with $\frac{\phi_R(r)}{r} \to \infty$, as $r\to \infty$.
Hence, $\phi_R$ is super-linear of modest growth.
This concludes the proof.
\end{proof}
\begin{proof}[Proof of Theorem \ref{SDE.PU.theorem.mainresult}]
The assertion follows from Theorem \ref{SDE.PU.theorem.mainresult.abstract}, Proposition \ref{SDE.PU.proposition.SobolevMLR} and Lemma \ref{SDE.PU.lemma.SobolevCoeffSatOSL}.
\end{proof}
\section{Proofs for Section \ref{section.mainResults.strongSolutionsSDE}}\label{section.SDE.applications.SDE}
The aim of this section is to prove Theorem \ref{SDE.application.SDE.theorem.figalli}, Theorem \ref{SDE.application.SDE.theorem.lebrislion} and Theorem \ref{SDE.application.SDE.theorem.ams} stated in Section \ref{section.mainResults.strongSolutionsSDE}. In the first subsection we will recall several known results on the existence of solutions to degenerate \eqref{FPKE}. In the second subsection, we will finally combine the results from the first subsection with the pathwise uniqueness results from Section \ref{section.SDE.pathwiseUniqueness} via Theorem \ref{SDE.mainresult.abstract} to obtain probabilistically strong solutions to the associated SDEs.
\subsection{Literature: Existence of solutions to degenerate \eqref{FPKE}}\label{section.SDE.applications.SDE.1}
 In this section, we will recall some prominent results regarding existence of solutions to degenerate \eqref{FPKE} from \cite{figalli2008existence} in the case of bounded coefficients, and from \cite{lebris2008existence} and \cite{bogachev2015FPKE} in the case of unbounded coefficients.
Recall that $\beta=\beta(a,b)$ is defined as in as in \eqref{SDE.results:beta}.
 
\subsubsection{Example for bounded coefficients $b,\sigma$}
The  following result is essentially taken from \cite[Theorem 4.12]{figalli2008existence}.
\begin{theorem}\label{SDE.application.FPKE.theorem.figalli}
	Let $a:[0,T]\times\rd \to S_+(\rd)$ and $b:[0,T]\times \rd \to \rd$ be bounded Borel measurable functions with $\divv\beta \in L^1_{\mathrm{loc}}([0,T]\times\rd)$ and ${(\divv\beta)^- \in L^1([0,T];L^\infty(\rd))}$.
	Let $v \in (L^1\cap L^\infty)(\rd)$ such that $v \geq 0$ a.e.
	Then there exists an $L^1$-solution ${(u_t)_{t\in [0,T]}}$ to \eqref{FPKE} such that $\left.u\right|_{t=0}=v$, $u_t \geq 0$ a.e. for all $t\in [0,T]$, and  ${u \in L^\infty([0,T];L^1(\rd))\cap L^\infty([0,T];L^\infty(\rd))}$. 
\end{theorem}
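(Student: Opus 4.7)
The plan is to prove the theorem by vanishing-viscosity regularisation combined with weak-$\ast$ compactness, following the strategy of \cite{figalli2008existence}. First, let $(\rho_\varepsilon)_{\varepsilon>0}$ be a standard mollifier on $\rd$ and set
\begin{align*}
a^\varepsilon := a\ast\rho_\varepsilon + \varepsilon I_d, \quad b^\varepsilon := b\ast\rho_\varepsilon, \quad v^\varepsilon := v\ast\rho_\varepsilon,
\end{align*}
where the convolutions are componentwise in the spatial variable. The matrix $a^\varepsilon$ is smooth, bounded and uniformly elliptic, $b^\varepsilon$ is smooth and bounded, and $v^\varepsilon\in C_b^\infty(\rd)$ with $0\leq v^\varepsilon$, $\|v^\varepsilon\|_1\leq\|v\|_1$, $\|v^\varepsilon\|_\infty\leq\|v\|_\infty$. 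A crucial observation is that, since mollification commutes with the distributional derivative and adding $\varepsilon I_d$ is divergence-free, one has
\begin{align*}
\beta^\varepsilon := b^\varepsilon-\textstyle\sum_j \partial_j a^{\varepsilon,ij} = \beta\ast\rho_\varepsilon,
\end{align*}
hence $(\divv\beta^\varepsilon_s)^-\leq(\divv\beta_s)^-\ast\rho_\varepsilon$ pointwise and $\|(\divv\beta^\varepsilon_s)^-\|_\infty\leq\|(\divv\beta_s)^-\|_\infty$ uniformly in $\varepsilon$.

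For each fixed $\varepsilon>0$, classical linear parabolic theory (e.g.\ Ladyzhenskaya--Solonnikov--Uraltseva) yields a smooth classical solution $u^\varepsilon$ to $\partial_t u^\varepsilon=L^{\varepsilon,*}_t u^\varepsilon$ with $u^\varepsilon|_{t=0}=v^\varepsilon$, and the parabolic maximum principle gives $u^\varepsilon\geq 0$. Rewriting the adjoint in non-divergence form using $\beta^\varepsilon$,
\begin{align*}
\partial_t u^\varepsilon=\textstyle\sum_{i,j}a^{\varepsilon,ij}\partial_i\partial_j u^\varepsilon + \sum_{i,j}(\partial_i a^{\varepsilon,ij})\partial_j u^\varepsilon-\beta^\varepsilon\cdot\nabla u^\varepsilon-(\divv\beta^\varepsilon)u^\varepsilon,
\end{align*}
the maximum principle yields the pointwise bound
\begin{align*}
\|u^\varepsilon_t\|_\infty\leq\|v\|_\infty\exp\Bigl(\textstyle\int_0^t\|(\divv\beta_s)^-\|_\infty\,\mathrm{d}s\Bigr),
\end{align*}
uniform in $\varepsilon$. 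Conservation of mass (integrating against a smooth cut-off converging to $1$ and using the uniform $L^\infty$-bounds on $a^\varepsilon,b^\varepsilon$ to discard the boundary contributions) gives $\|u^\varepsilon_t\|_1=\|v^\varepsilon\|_1\leq\|v\|_1$ for every $t\in[0,T]$.

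By Banach--Alaoglu, extract a subsequence along which $u^{\varepsilon_n}\rightharpoonup^\ast u$ in $L^\infty([0,T]\times\rd)$ with $u\geq 0$ and $u\in L^\infty([0,T];L^1(\rd))\cap L^\infty([0,T];L^\infty(\rd))$ (the $L^1$-bound survives by Fatou applied to the time-marginals). To pass to the limit in the distributional formulation $\int_0^T\int_\rd(\partial_s\phi\,\varphi+\phi\,L^{\varepsilon_n}_s\varphi)\,u^{\varepsilon_n}\,\mathrm{d}x\mathrm{d}s=0$, for $\varphi\in C_c^\infty(\rd), \phi\in C_c^\infty((0,T))$, one uses that $a^{\varepsilon_n}\to a$ and $b^{\varepsilon_n}\to b$ strongly in $L^p_{\mathrm{loc}}$ for every $p<\infty$ (since $a,b\in L^\infty$), so the products $(\phi L^{\varepsilon_n}_s\varphi)\cdot u^{\varepsilon_n}$ converge in the duality $L^1$--$L^\infty$. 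This yields a distributional solution $u$ in $L^\infty([0,T]\times\rd)$. From the resulting integral identity, $t\mapsto\int\varphi\,u_t\,\mathrm{d}x$ is absolutely continuous for each $\varphi\in C_c^\infty$, and by separability of $C_c(\rd)$ one obtains a vaguely continuous representative $(u_t)_{t\in[0,T]}$; the initial condition $u_0=v$ is read off from this representative. The main technical obstacle is the uniform $L^\infty$ estimate: it relies on the identity $\beta^\varepsilon=\beta\ast\rho_\varepsilon$, which in turn depends on the fact that we mollify $a$ and $b$ with the same kernel and that the elliptic perturbation $\varepsilon I_d$ contributes nothing to $\beta$; without this commutation one would only control $\divv b^\varepsilon$ rather than $\divv\beta^\varepsilon$, and the hypothesis on $(\divv\beta)^-$ would be insufficient.
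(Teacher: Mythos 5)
The paper does not prove this theorem itself; it states that the result "is essentially taken from" Figalli's Theorem~4.12 and leaves the argument to that reference. Your vanishing-viscosity reconstruction (mollify $a,b,v$, add $\varepsilon I_d$, observe $\beta^\varepsilon=\beta\ast\rho_\varepsilon$ so the bound on $(\divv\beta)^-$ survives regularisation, derive uniform $L^1$ and $L^\infty$ a priori bounds, extract a weak-$\ast$ limit, and recover a vaguely continuous representative) is precisely the strategy used in Figalli's proof, so your argument is correct and takes essentially the same route as the cited source.
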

\subsubsection{Example for unbounded coefficients $b,\sigma$}
The following result is essentially a special case of \cite[Theorem 6.7.4]{bogachev2015FPKE}.\begin{theorem}\label{SDE.application.FPKE.theorem.AMS}
	Let $p,p' \in (1,\infty)$ such that $\nicefrac{1}{p}+\nicefrac{1}{p'}=1$.
Let ${b \in L^p([0,T]\times \rd;\rd)}$ and $a \in L^p([0,T]\times\rd;S_+(\rd))$ such that 
	$a(t,\cdot) \in W^{1,p}(O)$, $\sup_{t\in [0,T]}\norm[W^{1,p}(O)]{a(t,\cdot)}<\infty$ for every Ball $O\subseteq \rd$. Furthermore, 
	assume that $\divv \beta \in L^1_{\mathrm{loc}}([0,T]\times \rd)$ such that $(\divv \beta)^- \in L^1([0,T];L^\infty(\rd))$. 
	
	Let $v \in L^1(\rd)\cap L^{p'}(\rd)$.
	Then there exists an $L^1$-solution $(u_t)_{t\in [0,T]}$ to \eqref{FPKE} with $\left.u\right|_{t=0}=v$ such that $u \in L^\infty([0,T];L^1(\rd))\cap L^\infty([0,T];L^{p'}(\rd))$.
	Furthermore, if $v \geq 0 $ a.e. then $u_t$ can be chosen to be nonnegative almost everywhere for all $t\in [0,T]$.
\end{theorem}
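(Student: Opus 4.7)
The plan is to deduce Theorem \ref{SDE.application.FPKE.theorem.AMS} directly from \cite[Theorem 6.7.4]{bogachev2015FPKE}, which is formulated for Fokker--Planck equations in divergence form with Sobolev-regular diffusion and $L^p$-integrable effective drift. The first step is a rewriting of the Kolmogorov operator \eqref{SDE.definition.generator} in divergence form: using $\sum_{i,j} a^{ij}\partial_{x_i}\partial_{x_j}\varphi = \sum_{i,j}\partial_{x_i}(a^{ij}\partial_{x_j}\varphi) - \sum_{i,j}(\partial_{x_j}a^{ij})\partial_{x_i}\varphi$ together with the definition of $\beta$ in \eqref{SDE.results:beta} yields $L_t\varphi = \sum_{i,j}\partial_{x_i}(a^{ij}\partial_{x_j}\varphi) + \sum_i \beta^i \partial_{x_i}\varphi$, which brings \eqref{FPKE.definition.solution:1} into the exact form treated in the cited reference.

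Next I would verify the hypotheses of the cited theorem. The uniform local Sobolev assumption $\sup_{t\in [0,T]}\norm[{W^{1,p}(O)}]{a(t,\cdot)}<\infty$ for every ball $O\subseteq\rd$ yields $|D_x a| \in L^\infty([0,T];L^p_{\mathrm{loc}}(\rd))$, so that the distribution $\beta$ is represented by an element of $L^p_{\mathrm{loc}}([0,T]\times\rd;\rd)$ via $b \in L^p([0,T]\times\rd;\rd)$. The hypothesis $(\divv \beta)^- \in L^1([0,T];L^\infty(\rd))$ is exactly the dissipativity condition used in \cite[Theorem 6.7.4]{bogachev2015FPKE} to derive, by testing against $u^{p'-1}$ on smooth approximations and applying Gronwall's lemma, the $L^\infty([0,T];L^{p'}(\rd))$-bound on the solution; conservation of the $L^1$-mass gives the $L^\infty([0,T];L^1(\rd))$-bound. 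The initial datum $v \in L^1(\rd) \cap L^{p'}(\rd)$ lies precisely in the class admitted by the theorem. Invoking \cite[Theorem 6.7.4]{bogachev2015FPKE} then produces the desired $L^1$-solution $(u_t)_{t\in[0,T]}$ with the stated integrability. Preservation of non-negativity when $v\geq 0$ is either built into the construction in the reference or can be obtained by approximating $v$ from below by a monotone sequence of nonnegative bounded initial data, solving each Cauchy problem, and passing to the limit using the $L^\infty([0,T];L^{p'}(\rd))$-estimate (which is stable under such approximations).

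The main obstacle is bookkeeping: one must verify that the exact hypotheses of \cite[Theorem 6.7.4]{bogachev2015FPKE}---its integrability, structural, and regularity requirements on each coefficient in its divergence-form presentation---are matched by our assumptions after the rewriting above. In particular, the interplay between the $L^p_t L^p_x$-integrability of $b$ and the uniform-in-time \emph{only local} Sobolev bound on $a$ must be reconciled with what may be a slightly different formulation in the book; a standard localisation argument, exploiting the fact that $(\divv\beta)^- \in L^1([0,T];L^\infty(\rd))$ is a \emph{global} bound, should close this gap, since it guarantees that the energy estimates derived from testing the equation against cutoffs times $u^{p'-1}$ pass to the limit as the spatial cutoff is removed. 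Once this verification is carried out, the remainder of the argument is a direct citation.
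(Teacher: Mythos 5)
Your overall route—reduce the statement to \cite[Theorem 6.7.4]{bogachev2015FPKE}, reconcile the divergence-form presentation of the operator via the identity $L_t\varphi=\sum_{i,j}\partial_{x_j}(a^{ij}\partial_{x_i}\varphi)+\sum_i\beta^i\partial_{x_i}\varphi$ implicit in the definition of $\beta$, and use mass conservation to get the $L^\infty_t L^1_x$-bound—is precisely the paper's route, and your verification of the hypotheses (in particular that $\beta$ is locally $L^p$ since $|D_x a|\in L^\infty_t L^p_{x,\mathrm{loc}}$ and $b\in L^p_{t,x}$) is correct. Your first alternative for the non-negativity (it is ``built into the construction'') is also the paper's argument.

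However, there is a genuine gap in the final step ``Invoking \cite[Theorem 6.7.4]{bogachev2015FPKE} then produces the desired $L^1$-solution.'' The cited theorem produces a solution in the sense of Definition \ref{FPKE.definition.solution.measure}: a family $(\mu_t)_{t\in(0,T)}$ determined only for a.e.\ $t$, with the initial condition attained only along a full-measure set of times. The statement you need to prove, by contrast, asks for an \emph{$L^1$-solution} in the sense of Definition \ref{FPKE.definition.solution}: a vaguely (in fact narrowly) continuous family $(u_t)_{t\in[0,T]}$ of $L^1$-densities defined for \emph{every} $t\in[0,T]$, with $u_0=v$. You do not address this upgrade. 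The paper handles it explicitly: Proposition \ref{FPKE.solution.massconservation} gives the $L^\infty_t L^1_x$-bound and in fact $\mu_t(\rd)=\nu(\rd)$, Proposition \ref{FPKE.solution.vagueVersion} then yields a vaguely continuous version $(\tilde\mu_t)_{t\in[0,T]}$ of $(\mu_t)$ with $\tilde\mu_0=\nu$, and a short argument using $\mu\in L^\infty([0,T];L^{p'}(\rd))$ shows that $\tilde\mu_t$ is absolutely continuous with respect to Lebesgue measure for \emph{all} $t\in[0,T]$, not merely a.e.\ $t$. Without this passage from the a.e.-in-time measure solution to a pointwise-in-time, narrowly continuous density curve, the conclusion as stated does not follow from \cite[Theorem 6.7.4]{bogachev2015FPKE} alone. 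Please add this step.
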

\begin{proof}
By \cite[Theorem 6.7.4]{bogachev2015FPKE}, there exists a solution $(\mu_t)_{t\in (0,T)}$ to \eqref{FPKE} with $\left.\mu\right|_{t=0}=v$ (in the sense of Definition \ref{FPKE.definition.solution.measure}) such that $\mu \in L^\infty([0,T];L^{p'}(\rd))$.
The nonnegativity of $(\mu_t)_{t\in (0,T)}$ follows from the scheme of the proof of \cite[Theorem 6.7.4]{bogachev2015FPKE}:

	The solution $(\mu_t)_{t\in (0,T)}$ is constructed as a weak-* limit in $L^\infty([0,T];L^{p'}(\rd))$ of a sequence of classical solutions $(u^k_t)_{t\in [0,T]}$ \eqref{FPKE} with respect to the operator $L(a_k,b_k)$ and $\left.u^k\right|_{t=0}=v^k$, where, for $k \in \LN$, $a^k$ and $b^k$ are suitable smooth approximations of $a$ and $b$, respectively, $a^k$ nondegenerate diffusion matrices and $v^k$ smooth compactly supported functions such that $v^k\to v$ in $L^{p'}(\rd)$, as $k\to \infty$.
	Clearly, all these classical solutions $(u^k_t)_{t\in [0,T]}$ are positivity preserving in the sense that $v^k \geq 0$ implies $u^k_t \geq 0$ for all $t\in [0,T]$.
	In the case that $v \geq 0$ a.e., we may choose $v^k$ to be nonnegative. From here, the nonnegativity of $(\mu_t)_{t\in (0,T)}$ follows.
	From Proposition \ref{FPKE.solution.massconservation} we conclude that $\mu \in L^\infty([0,T];L^1(\rd))$ (actually $\mu_t(\rd) = \nu(\rd)$ for a.e. $t\in (0,T)$).
	The existence of a narrowly continuous version $(\tilde{\mu}_t)_{t\in (0,T)}$ of $(\mu_t)_{t\in (0,T)}$ with $\left.\tilde{\mu}\right|_{t=0}=v$ follows from Proposition \ref{FPKE.solution.vagueVersion}. Since $\mu \in L^\infty([0,T];L^{p'}(\rd))$, it is an easy argument to see that $\tilde{\mu}_t$ is absolutely continuous with respect to Lebesgue measure for all $t\in [0,T]$. Hence, the assertion follows.
	\end{proof}

The next result is essentially taken from \cite[Section 7, Proposition 5]{lebris2008existence} (or \cite[Theorem 9.8.1]{bogachev2015FPKE}). See also the generalisation \cite{luo2013BV}, where they could replace the condition $\beta\in L^1([0,T];W^{1,1}_{\mathrm{loc}}(\rd;\rd))$ by $\beta\in L^1([0,T];\text{BV}_{\mathrm{loc}}(\rd;\rd))$ in the below theorem.
\begin{theorem}
\label{SDE.application.FPKE.theorem.lebrislion}
	Assume that the following condition on $\sigma$ and $\beta$ are fulfilled:
	\begin{align}
		\sigma \in L^2([0,T];W^{1,2}_{\mathrm{loc}}(\rd;\RR^{d\times d_1})),\ \ \beta\in L^1([0,T];W^{1,1}_{\mathrm{loc}}(\rd;\rd)) \label{SDE.application.lebrisLion.degenerate.cond1}\\
		\divv \beta \in L^1([0,T];L^\infty(\rd)), \ \ \frac{|\beta|}{1+|x|} \in L^1([0,T];L^1(\rd)) + L^1([0,T];L^\infty(\rd))\label{SDE.application.lebrisLion.degenerate.cond2}\\
		\frac{\sigma}{1+|x|} \in L^2([0,T];L^2(\rd;\RR^{d\times d_1})) + L^2([0,T];L^\infty(\rd;\RR^{d\times d_1})).\label{SDE.application.lebrisLion.degenerate.cond3}
	\end{align}
	Let $v \in L^1(\rd)\cap L^\infty(\rd)$.
	Then there exists a unique $L^1$-solution $(u_t)_{t\in [0,T]}$ to \eqref{FPKE} with respect to the operator $L({\sigma\sigma^*\slash 2},b)$ such that $\left.u\right|_{t=0}=v$ and $u\in\mathscr{L}^\sigma_{1,\infty}$, where
	\begin{align*}
	\mathscr{L}^\sigma_{1,\infty}:= \{ \eta \in L^\infty([0,T];(L^1\cap L^\infty)(\rd)) : \sigma^*\nabla \eta \in L^2([0,T];L^2(\rd;\RR^{d_1})\}.
\end{align*}
	Furthermore, if $v \geq 0 $ a.e. then $u_t$ can be chosen to be nonnegative almost everywhere for all $t\in [0,T]$.
\end{theorem}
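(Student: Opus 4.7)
The plan is to adapt the degenerate DiPerna--Lions renormalization theory developed by Le Bris and Lions in \cite[Section 7, Proposition 5]{lebris2008existence} and by Bogachev et al.\ in \cite[Theorem 9.8.1]{bogachev2015FPKE}, where the core of the existence and uniqueness assertions is already contained. Only the nonnegativity statement needs a small additional argument, which will come for free from the approximation scheme used for existence.

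For existence, I would proceed by parabolic regularization. Mollify the coefficients in $x$ to smooth $\sigma^\varepsilon,\beta^\varepsilon$ and add $\varepsilon I$ to the diffusion matrix so that the operator is strictly parabolic; regularize $v$ to a smooth compactly supported $v^\varepsilon$ converging to $v$ in $L^1\cap L^p$ for every $p<\infty$, chosen nonnegative whenever $v\geq 0$. Classical parabolic theory supplies a unique smooth $u^\varepsilon$, nonnegative in the positive case by the maximum principle. The sub-linear growth conditions \eqref{SDE.application.lebrisLion.degenerate.cond2}, \eqref{SDE.application.lebrisLion.degenerate.cond3} permit integration by parts against test functions of the form $\chi(x/R)\phi(u^\varepsilon)$ and yield, uniformly in $\varepsilon$: mass conservation in $L^1$; the $L^\infty$-bound $\norm[{L^\infty}]{u^\varepsilon_t}\leq \norm[{L^\infty}]{v}\exp\bigl(\int_0^t\norm[{L^\infty}]{(\divv\beta^\varepsilon_s)^-}\,\mathrm{d}s\bigr)$ from the parabolic maximum principle; and an $L^2_t L^2_x$-bound on $\sigma^{\varepsilon,*}\nabla u^\varepsilon$ from the standard energy identity. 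A weak-$*$ extraction in $L^\infty_t((L^1\cap L^\infty)_x)$ together with weak $L^2$-convergence of the symmetrized gradients produces a candidate $u\in \mathscr{L}^\sigma_{1,\infty}$; strong $L^1_{\mathrm{loc}}$-convergence of the coefficients then allows passing to the limit in the weak formulation.

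The crux is uniqueness. For two solutions $u_1,u_2\in \mathscr{L}^\sigma_{1,\infty}$ with equal initial datum, set $w:=u_1-u_2\in \mathscr{L}^\sigma_{1,\infty}$, which solves \eqref{FPKE} with zero initial data. The decisive step is to establish the renormalization identity: for every convex $\eta\in C^2(\RR)$ with bounded $\eta'$ and $\eta(0)=0$,
\begin{equation*}
\partial_t\eta(w) = \divv(a\nabla \eta(w)) - \divv(\beta\,\eta(w)) + [\eta(w)-\eta'(w)w]\,\divv\beta - \tfrac{1}{2}\eta''(w)|\sigma^{*}\nabla w|^2
\end{equation*}
in $\mathcal{D}'((0,T)\times\rd)$. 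This is the Le Bris--Lions adaptation of the DiPerna--Lions commutator lemma, and it relies precisely on the local Sobolev regularity \eqref{SDE.application.lebrisLion.degenerate.cond1} together with the membership $\sigma^{*}\nabla w\in L^2_tL^2_x$ built into $\mathscr{L}^\sigma_{1,\infty}$. Choosing $\eta_N(r)=\sqrt{r^2+N^{-2}}-N^{-1}$ and a smooth cutoff $\chi_R$, integrating in space-time, sending $R\to\infty$ so that the flux contributions at spatial infinity vanish by \eqref{SDE.application.lebrisLion.degenerate.cond2}, \eqref{SDE.application.lebrisLion.degenerate.cond3} and $w\in L^\infty_t(L^1\cap L^\infty)_x$, discarding the non-positive defect term, and letting $N\to\infty$ with $[\eta_N(w)-\eta_N'(w)w]\to 0$ pointwise and dominated by $2|w|+N^{-1}$, yields $\int_\rd|w_t|\,\mathrm{d}x=0$ for every $t$, hence $w\equiv 0$. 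Nonnegativity in the case $v\geq 0$ is then automatic: any weak-$*$ limit of the nonnegative approximants $u^\varepsilon$ is nonnegative a.e., and the uniqueness just proved forces the entire $\mathscr{L}^\sigma_{1,\infty}$-solution to coincide with this limit.

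The main obstacle will be the second-order commutator inside the renormalization. For the first-order part, Ambrosio--DiPerna--Lions theory under \eqref{SDE.application.lebrisLion.degenerate.cond1} is classical; but controlling $[\divv(a\nabla\cdot),\rho_\varepsilon\ast]w$ with $\sigma$ only locally $W^{1,2}$ is delicate and requires writing $a\nabla w=\tfrac{1}{2}\sigma\sigma^{*}\nabla w$ and exploiting the $L^2$-integrability of $\sigma^{*}\nabla w$ to absorb the derivatives of $\sigma$ via Cauchy--Schwarz. This is precisely the technical contribution of \cite{lebris2008existence} which I would invoke essentially verbatim.
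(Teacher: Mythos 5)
Your proposal follows the same route as the paper at the substantive level: both rely on the degenerate DiPerna--Lions renormalization theory of Le~Bris and Lions. The difference is that the paper simply cites \cite[Section 7, Proposition 5]{lebris2008existence} for the existence and uniqueness of a solution $\mu\in\mathscr{L}^\sigma_{1,\infty}$, while you re-derive the mollification/commutator/renormalization machinery. That re-derivation is correct in outline (the regularization scheme, the $L^\infty$ bound via $(\divv\beta)^-$, the energy estimate for $\sigma^*\nabla u$, the convex-renormalization identity, the $\eta_N$-truncation, the localization with $\chi_R$ using the sub-linear growth conditions), but it is extra work the paper avoids by citing.

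There is, however, a genuine gap. What Le~Bris--Lions (and your weak-$*$ extraction) deliver is a solution in the sense of Definition \ref{FPKE.definition.solution.measure}: an element of $L^\infty([0,T];(L^1\cap L^\infty)(\rd))$ that satisfies \eqref{FPKE.definition.solution.measure:1}, so $u_t$ is only determined for $\mathrm{d}t$-a.e.\ $t$. The theorem claims the existence of an $L^1$-solution in the sense of Definition \ref{FPKE.definition.solution}, which is a \emph{vaguely continuous} curve $(\mu_t)_{t\in[0,T]}$ such that for \emph{every} $t$ the measure $\mu_t$ is absolutely continuous with density $u_t\in L^1(\rd)$, and the integrated identity \eqref{FPKE.definition.solution:1} holds for all $t\in[0,T]$. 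Your proof never addresses this upgrade: the weak-$*$ limit in $L^\infty_t((L^1\cap L^\infty)_x)$ does not by itself produce a pointwise-in-time, vaguely continuous representative, nor does it show $\tilde\mu_t\ll \mathrm{d}x$ for every $t$ (as opposed to a.e.\ $t$). Closing this gap is precisely the content of the paper's own short proof: it invokes Proposition \ref{FPKE.solution.vagueVersion} (the Rehmeier-type continuity result) to obtain a vaguely continuous version with $\tilde\mu_0=\nu$, then uses the uniform $L^\infty$ bound to upgrade the a.e.-absolute-continuity to absolute continuity for all $t$. A related minor point: your uniqueness argument concludes "$\int_\rd|w_t|\,\mathrm{d}x=0$ for every $t$", but the renormalization identity directly gives this only for a.e.\ $t$; the "every $t$" conclusion again requires the time-continuity of the representative, which you have not established.
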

\begin{proof}
	In \cite[Section 7, Proposition 5]{lebris2008existence}, the authors showed the existence and uniqueness of a solution $(\mu_t)_{t\in (0,T)}$ such that $\left.u\right|_{t=0}=v$ and $\mu \in \mathscr{L}^\sigma_{1,\infty}$ in the sense of Definition \ref{FPKE.definition.solution.measure}. The existence of a narrowly continuous version $(\tilde{\mu}_t)_{t\in [0,T]}$ of $(\mu_t)_{t\in (0,T)}$ follows from Proposition \ref{FPKE.solution.vagueVersion}. Since $\mu \in L^\infty([0,T];L^{\infty}(\rd))$, it is an easy argument to see that $\tilde{\mu}_t$ is absolutely continuous with respect to Lebesgue measure for all $t\in [0,T]$.
	This shows the first part of the assertion.
	The argument for the last part is similar to the one in the proofs of Theorem \ref{SDE.application.FPKE.theorem.figalli} and Theorem \ref{SDE.application.FPKE.theorem.AMS} and will therefore be omitted.
	\end{proof}

\subsection{Proofs of Theorem \ref{SDE.application.SDE.theorem.figalli}, Theorem \ref{SDE.application.SDE.theorem.ams} and Theorem \ref{SDE.application.SDE.theorem.lebrislion}}
\label{section.SDE.applications.SDE.2}
In this subsection we prove the main applications mentioned in the introduction to this chapter.

\begin{proof}[Proof of Theorem \ref{SDE.application.SDE.theorem.figalli}]
Let us first note that by Proposition \ref{FPKE.solution.massconservation}, the $L^1$-solution $(u_t)_{t\in [0,T]}$ with $\left.\mu\right|_{t=0}=v$ provided by Theorem \ref{SDE.application.FPKE.theorem.figalli} is a $\PPPP_0$-solution to \eqref{FPKE}.
Furthermore, $P^{\infty,loc}_{v}$-pathwise uniqueness for \eqref{SDE} is implied by Theorem \ref{SDE.PU.theorem.mainresult}.
Therefore, the assertion follows from Theorem \ref{SDE.mainresult.abstract}.
\end{proof}
\begin{proof}[Proof of Theorem \ref{SDE.application.SDE.theorem.lebrislion}]
The argument is similar to the one in the proof of Theorem \ref{SDE.application.SDE.theorem.figalli}. Here, the assertion follows from Theorem \ref{SDE.application.FPKE.theorem.lebrislion}, Theorem \ref{SDE.PU.theorem.mainresult} via Theorem \ref{SDE.mainresult.abstract}.
\end{proof}
\begin{proof}[Proof of Theorem \ref{SDE.application.SDE.theorem.ams}]
The argument is similar to the one in the proof of Theorem \ref{SDE.application.SDE.theorem.figalli}. Here, the assertion follows from Theorem \ref{SDE.application.FPKE.theorem.AMS}, Theorem \ref{SDE.PU.theorem.rz10} via Theorem \ref{SDE.mainresult.abstract}.
\end{proof}

\section{Proof for Section \ref{section.mainResults.uniquenessFPKE}}\label{section.SDE.applications.FPKE}
In \cite{roeckner2010weakuniqueness}, R\"ockner and Zhang developed a procedure to obtain weak uniqueness results for degenerate Fokker--Planck--Kolmogorov equations, employing the superposition principle by Figalli and a suitable pathwise uniqueness result (see Theorem \ref{SDE.PU.theorem.rz10}).
The essence of their uniqueness result can be boiled down to the following proposition and remark below.
With the help of the updated superposition principle provided by Theorem \ref{SDE.theorem.superpositionRoe} and the pathwise uniqueness results from Section \ref{section.SDE.pathwiseUniqueness}, we can update \cite[Theorem 1.1]{roeckner2010weakuniqueness} in terms of Theorem \ref{SDE.application.FPKE.theorem.uniqueness}, which is proved below.
\begin{proposition}\label{SDE.FPKE.uniqueness.theorem.abstract}
	Let $\nu \in \PPPP(\rd)$ and $a:={\sigma\sigma^*\slash 2}$.
	Let $\mathscr{L}_{\nu}$ be a set of families $(\mu_t)_{t\in [0,T]}\subseteq \PPPP(\rd)$ such that $t\mapsto \mu_t$ is vaguely continuous, $\mu_0=\nu$ for which \eqref{SDE.superposition.conditionRoe} holds.
	Assume that $P^{\mathscr{L}_{\nu}}$-weak uniqueness holds for \eqref{SDE}, where
	\begin{align*}
		P^{\mathscr{L}_{\nu}}:=\{Q \in \PPPP(C([0,T];\rd)): (Q\circ \pi_t\inv)_{t\in [0,T]} \in \mathscr{L}_{\nu}\}.
	\end{align*}
	Then there is at most one solution $(\mu_t)_{t\in [0,T]}$ to \eqref{FPKE} such that $(\mu_t)_{t\in [0,T]} \in \mathscr{L}_{\nu}$.
\end{proposition}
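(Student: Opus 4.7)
The plan is to argue by contradiction in spirit: take two candidate solutions $(\mu_t^1)_{t\in[0,T]}, (\mu_t^2)_{t\in[0,T]} \in \mathscr{L}_\nu$ of \eqref{FPKE} with $a = \sigma\sigma^*/2$, lift each of them through the superposition principle to a solution of the associated martingale problem (equivalently, a probabilistically weak solution of \eqref{SDE}), then invoke the assumed $P^{\mathscr{L}_\nu}$-weak uniqueness to collapse the two lifts into one measure on path space. Reading off the time-marginals will then give $\mu^1_t = \mu^2_t$ for every $t \in [0,T]$.

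First, by the very definition of $\mathscr{L}_\nu$, both $(\mu_t^i)_{t\in[0,T]}$, $i=1,2$, are probability solutions to \eqref{FPKE} starting at $\nu$ for which the integrability assumption \eqref{SDE.superposition.conditionRoe} is satisfied. Hence Theorem \ref{SDE.theorem.superpositionRoe} applies and yields, for each $i\in\{1,2\}$, a solution $Q_i \in \PPPP(C([0,T];\rd))$ to the martingale problem associated with $L(a,b)$ starting at $\nu$, such that $Q_i \circ \pi_t^{-1} = \mu_t^i$ for every $t\in[0,T]$. In particular, $(Q_i\circ\pi_t^{-1})_{t\in[0,T]} = (\mu_t^i)_{t\in[0,T]} \in \mathscr{L}_\nu$, so $Q_i \in P^{\mathscr{L}_\nu}$.

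Next, I invoke Proposition \ref{SDE.proposition.martingaleSolutionEQweaksolution} to translate each $Q_i$ into a probabilistically weak solution $(X^i, W^i)$ to \eqref{SDE} with initial distribution $\nu$ satisfying $\law{X^i} = Q_i$. Since $\law{X^i}\circ\pi_t^{-1} = \mu_t^i$ for all $t$, the tuple $(X^i,W^i)$ is in fact a $P^{\mathscr{L}_\nu}$-weak solution. The hypothesis of $P^{\mathscr{L}_\nu}$-weak uniqueness for \eqref{SDE} then forces $\law{X^1} = \law{X^2}$, that is, $Q_1 = Q_2$. Evaluating both sides under the time-$t$ projection yields
\begin{equation*}
\mu_t^1 = Q_1 \circ \pi_t^{-1} = Q_2 \circ \pi_t^{-1} = \mu_t^2 \qquad \forall t\in[0,T],
\end{equation*}
which is the desired uniqueness statement.

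I do not anticipate a serious obstacle: the proof is essentially a bookkeeping exercise chaining together the superposition principle, the equivalence between martingale problem solutions and weak SDE solutions, and the given restricted weak uniqueness. The only point that deserves a moment of care is checking that the lifts $Q_i$ land in $P^{\mathscr{L}_\nu}$, which is immediate from the marginal identity $Q_i\circ\pi_t^{-1}=\mu_t^i$ together with the definition of $\mathscr{L}_\nu$ (in particular, the vague continuity and the starting condition $\mu_0 = \nu$ are inherited from $(\mu_t^i)$ being a solution to \eqref{FPKE} lying in $\mathscr{L}_\nu$).
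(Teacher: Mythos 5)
Your proof is correct and follows essentially the same route as the paper: lift each solution via the superposition principle (Theorem \ref{SDE.theorem.superpositionRoe}) and Proposition \ref{SDE.proposition.martingaleSolutionEQweaksolution} to a $P^{\mathscr{L}_\nu}$-weak solution, then invoke the assumed $P^{\mathscr{L}_\nu}$-weak uniqueness and project to time-marginals. The paper states this quite tersely, while you spell out the bookkeeping (in particular the check that the lifted laws $Q_i$ land in $P^{\mathscr{L}_\nu}$), but the argument is identical.
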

\begin{proof}
	Let $(\mu^1_t)_{t\in [0,T]}, (\mu^2_t)_{t\in [0,T]} \in 	\mathscr{L}_{\nu}$ be two solution to \eqref{FPKE}.
Then, for $i\in\{1,2\}$, there exists a $P_{(\mu^i_t)}$-weak solution $(X^i,W^i)$ to \eqref{SDE} by Theorem \ref{SDE.theorem.superpositionRoe} and Proposition \ref{SDE.proposition.martingaleSolutionEQweaksolution}.
Hence,
$\mu^1_t = \law{X^1(t)}=\law{X^2(t)}=\mu^2_t$ for all $t\in [0,T]$.
This finishes the proof.
\end{proof}
\begin{remark}\label{SDE.FPKE.uniqueness.remark.abstract}
	Note that in the above situation $P^{\mathscr{L}_{\nu}}$-weak uniqueness is implied by $P^{\mathscr{L}_{\nu}}$-pathwise uniqueness, see Remark \ref{SDE.definition.remark.uniqStrongSolutionImpliesWeakUniq}.
\end{remark}
\begin{proof}[Proof of Theorem \ref{SDE.application.FPKE.theorem.uniqueness}]
Let $(\mu^1_t)_{t\in [0,T]}, (\mu^2_t)_{t\in [0,T]} \in 	\mathscr{L}^{p',\bar{p}'}_{\nu}$ be two solutions to \eqref{FPKE}.
Note that by assumption $a^{ij}, b^i \in L^1([0,T]\times\rd;\mu^k_t\mathrm{d}t)$, $1\leq i,j \leq d$, $k \in \{1,2\}$.
Hence, by Proposition \ref{FPKE.solution.massconservation}, $\mu_t^k (\rd)=\nu(\rd)$ for all $\in [0,T]$, $k \in \{1,2\}$. Without loss of generality, we may therefore assume $\nu(\rd)>0$ from now on.
We set
	\begin{align*}
		\tilde{\mu}^i_t := \frac{\mu_t^i}{\nu(\rd)},\ \ t\in [0,T].
	\end{align*}
	Note that $(\mu^1_t)_{t\in [0,T]}, (\mu^2_t)_{t\in [0,T]}\in \mathscr{L}_\nu$ are probability solutions to \eqref{FPKE}, where
	\begin{align*}
		\mathscr{L}_{\nu/\nu(\rd)} := \{(\mu_t)_{t\in [0,T]} \subseteq \PPPP(\rd) : t\mapsto \mu_t \text{ is narrowly cont.}, \mu_0 = \nu/\nu(\rd), \mu \in (L^p\cap L^{p'}_{\mathrm{loc}})([0,T]\times\rd)\}.
	\end{align*}
	Let $P^{\mathscr{L}_{\nu\slash\nu(\rd)}}$ be defined as in Proposition \ref{SDE.FPKE.uniqueness.theorem.abstract}. Now $P^{\mathscr{L}_{\nu\slash\nu(\rd)}}$-pathwise uniqueness is proved for $p\in [1,\infty]$ in Theorem \ref{SDE.PU.theorem.rz10} under the condition \ref{SDE.PU.condition.monotonicity}. 
	If $p=1$ and we assume the alternative Sobolev condition on $b$ and $\sigma$, respectively, we refer to Theorem \ref{SDE.PU.theorem.mainresult}.
	Therefore, the assertion follows via Proposition \ref{SDE.FPKE.uniqueness.theorem.abstract} and Remark \ref{SDE.FPKE.uniqueness.remark.abstract}.
\end{proof}
\section*{Appendix -- On the solutions to \eqref{FPKE}}
In \cite[Chapter 6]{bogachev2015FPKE} Bogachev, Krylov, R\"ockner and Shaposhnikov introduced the following general notion of measure-valued solution to \eqref{FPKE}.
\begin{definition}\label{FPKE.definition.solution.measure}
	We say that $(\mu_t)_{t\in (0,T)}$ is a solution to \eqref{FPKE} if  $t\mapsto |\mu_t|(A)\in L^1((0,T))$ for every Borel measurable set $A\subset\rd$ with compact closure in $\rd$, $b^i,a^{ij} \in L^1_{\mathrm{loc}}((0,T)\times \rd;\mu)$, $1\leq i,j\leq d$, 
 and for all $\varphi \in C_c^\infty((0,T)\times \rd)$ there exists $J_\varphi \subseteq (0,T)$ with $\lambda^1(J_\varphi)=T$ such that
			\begin{align}\label{FPKE.definition.solution.measure:1}
				\int_0^T \int_\rd (\partial_t\varphi + L_{t} \varphi)\ \mathrm{d}\mu_t\mathrm{d}t=0.
			\end{align}
	Furthermore, let $\nu \in \mathcal{M}_{\mathrm{loc}}(\rd)$.
	We say that $\mu$ has initial condition $\nu$, denoted $\left.\mu\right|_{t=0}=\nu$, if for each $\varphi \in C_c^\infty(\rd)$ there exists a Lebesgue measurable set $J_\varphi \in (0,T)$ with $\lambda^1(J_\varphi)=T$ such that
\begin{align}\label{FPKE.definition.solution.measure:2}
	\int_\rd \varphi\ \mathrm{d}\mu_t \to \int_\rd \varphi\ \mathrm{d}\nu, \text{ for $t\to 0$ with $t\in J_\varphi$.} 
\end{align}
\end{definition}
\begin{remark}\label{FPKE.definition.solution.measure:remark}
	In fact (see \cite[Proposition 6.1.2, Remark 6.1.3]{bogachev2015FPKE}), under the stronger conditions in Definition \ref{FPKE.definition.solution}, the notion of solution for \eqref{FPKE} in Definition \ref{FPKE.definition.solution} and Definition \ref{FPKE.definition.solution.measure} are the same, i.e. \eqref{FPKE.definition.solution.measure:1} and \eqref{FPKE.definition.solution.measure:2} are true if and only if \eqref{FPKE.definition.solution:1} is true.
	\end{remark}
	
	Furthermore, we have the following conservation of mass result, which can essentially be found in \cite[Remark 2.7]{figalli2008existence} (see also \cite[Proposition C.1.3, Remark C.1.4]{grube2022thesis}).
	\begin{proposition}\label{FPKE.solution.massconservation}
		Let $\nu\in \mathcal{M}_b(\rd)$. Let $(\mu_t)_{t\in (0,T)}\subseteq \mathcal{M}_b(\rd)$ be a solution to \eqref{FPKE} with $\left.\mu\right|_{t=0}=\nu$, with respect to $b^i, a^{ij} \in L^1([0,T]\times \rd;|\mu_t|\mathrm{d}t)$.
		Then $\mu_t(\rd) = \nu(\rd)$ for a.e. $t\in (0,T)$.
		Furthermore, if $(\mu_t)_{t\in (0,T)}\subseteq \mathcal{M}_+(\rd)$ and is a priori not necessarily a subset of $\mathcal{M}_b(\rd)$, then the same assertion holds.
		If $(\mu_t)_{t\in (0,T)}$ is additionally vaguely continuous, then $\mu_t(\rd) = \nu(\rd)$ for all $t\in (0,T)$.
	\end{proposition}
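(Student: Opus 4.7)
My plan is to derive, for each fixed compactly supported test function $\psi\in C_c^\infty(\rd)$, an absolutely continuous representative of $t\mapsto \int_\rd \psi\, \mathrm{d}\mu_t$ from the distributional formulation, and then to exploit a cutoff sequence $\psi_R$ exhausting $\rd$ in such a way that the contribution of the Kolmogorov operator vanishes in the limit thanks to the integrability assumption $b^i,a^{ij}\in L^1([0,T]\times\rd;|\mu_t|\mathrm{d}t)$.

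Concretely, fix $\psi\in C_c^\infty(\rd)$ and plug the product test function $\varphi(t,x)=\chi(t)\psi(x)$, with arbitrary $\chi\in C_c^\infty((0,T))$, into \eqref{FPKE.definition.solution.measure:1}. This yields
\begin{align*}
  \int_0^T \chi'(t)\Big(\int_\rd \psi\, \mathrm{d}\mu_t\Big)\mathrm{d}t = -\int_0^T \chi(t)\Big(\int_\rd L_t\psi\, \mathrm{d}\mu_t\Big)\mathrm{d}t,
\end{align*}
which identifies $t\mapsto \int_\rd \psi\, \mathrm{d}\mu_t$ as a function whose distributional time derivative equals $t\mapsto \int_\rd L_t\psi\, \mathrm{d}\mu_t$, an element of $L^1((0,T))$ by the integrability hypothesis on $b^i,a^{ij}$ with respect to $|\mu_t|\mathrm{d}t$. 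Hence this map admits a (unique up to $\mathrm{d}t$-null sets) absolutely continuous representative $F_\psi$ on $(0,T)$. The initial condition \eqref{FPKE.definition.solution.measure:2} then forces $\lim_{t\downarrow 0,\,t\in J_\psi} F_\psi(t)=\int_\rd \psi\, \mathrm{d}\nu$, so the fundamental theorem of calculus gives, for $\mathrm{d}t$-a.e.\ $t\in (0,T)$,
\begin{align*}
  \int_\rd \psi\, \mathrm{d}\mu_t = \int_\rd \psi\, \mathrm{d}\nu + \int_0^t\int_\rd L_s\psi\, \mathrm{d}\mu_s\, \mathrm{d}s.
\end{align*}

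Now take $\psi_R(x):=\psi(x/R)$ for a fixed $\psi\in C_c^\infty(\rd)$ with $\mathbbm{1}_{B_1}\le\psi\le\mathbbm{1}_{B_2}$. Intersect the full-measure sets associated to the countable family $\{\psi_R\}_{R\in\LN}$ to obtain a single set $J\subseteq (0,T)$ of full measure on which the identity above holds simultaneously for every $R$. For each fixed $t\in J$ we then send $R\to\infty$: since $\mu_t$ and $\nu$ are finite, bounded convergence gives $\int\psi_R\,\mathrm{d}\mu_t\to\mu_t(\rd)$ and $\int\psi_R\,\mathrm{d}\nu\to\nu(\rd)$; for the Kolmogorov term, observe that $\nabla\psi_R$ and $D^2\psi_R$ are uniformly bounded in $R$ and converge pointwise to $0$, whence $L_s\psi_R(x)\to 0$ for every $(s,x)$ and $|L_s\psi_R|\le C(|b(s,x)|+|a(s,x)|)$, the latter being in $L^1([0,T]\times\rd;|\mu_s|\mathrm{d}s)$ by hypothesis. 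Dominated convergence then annihilates the remainder and yields $\mu_t(\rd)=\nu(\rd)$ for every $t\in J$, i.e.\ for $\mathrm{d}t$-a.e.\ $t\in(0,T)$.

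The main obstacle is the DCT step, which is why the uniform bounds $\|\nabla\psi_R\|_\infty,\|D^2\psi_R\|_\infty\le C$ (not merely $O(R^{-1})$ on the support) are crucial: they give a dominating function independent of $R$ that lies in $L^1(|\mu_s|\mathrm{d}s)$. For the $\mathcal{M}_+(\rd)$ version with $(\mu_t)$ a priori only locally finite, I would apply the same argument first to conclude that $\int\psi_R\,\mathrm{d}\mu_t$ stays bounded as $R\to\infty$ (hence $\mu_t\in\mathcal{M}_b(\rd)$ for a.e.\ $t$), then repeat to get equality. Finally, under vague continuity of $(\mu_t)$, pick a sequence $t_n\to t$ with $t_n\in J$: monotone convergence in $R$ combined with vague convergence of $\mu_{t_n}\to\mu_t$ on each $\psi_R$ promotes the a.e.\ identity $\mu_{t_n}(\rd)=\nu(\rd)$ to the pointwise identity $\mu_t(\rd)=\nu(\rd)$ for every $t\in(0,T)$, completing the proof.
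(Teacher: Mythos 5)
The paper itself does not prove this proposition --- it only cites Remark~2.7 of Figalli's paper and the author's thesis --- so your argument is a self-contained proof that the paper delegates to the literature. The overall strategy (product test functions to obtain an $L^1$ distributional time derivative, an absolutely continuous representative and the initial condition to get the cutoff identity $\int\psi_R\,\mathrm{d}\mu_t=\int\psi_R\,\mathrm{d}\nu+\int_0^t\int L_s\psi_R\,\mathrm{d}\mu_s\,\mathrm{d}s$ for a.e.\ $t$, then the scaling $\psi_R(\cdot)=\psi(\cdot/R)$ with the domination $|L_s\psi_R|\le C(|b|+|a|)\in L^1(|\mu_t|\mathrm{d}t)$ to kill the drift/diffusion term) is sound, and so is the bootstrap to $\mathcal M_+$ via the a priori bound on $\int\psi_R\,\mathrm{d}\mu_t$.

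The last step, however, does not work as you state it. From vague convergence $\mu_{t_n}\to\mu_t$ and $\mu_{t_n}(\rd)=\nu(\rd)$ you get only $\mu_t(\rd)=\sup_R\lim_n\int\psi_R\,\mathrm{d}\mu_{t_n}\le\liminf_n\sup_R\int\psi_R\,\mathrm{d}\mu_{t_n}=\nu(\rd)$; the reverse inequality can fail because vague convergence permits mass to escape to infinity, and ``monotone convergence in $R$ combined with vague convergence'' cannot interchange the $\sup_R$ and $\lim_n$ by itself. The fix is immediate and stays inside your framework: for each fixed $R$, both sides of your cutoff identity are continuous in $t$ --- the left side because $\psi_R\in C_c(\rd)$ and $(\mu_t)$ is vaguely continuous, the right side because $s\mapsto\int L_s\psi_R\,\mathrm{d}\mu_s$ is in $L^1((0,T))$ so the $\mathrm{d}s$-integral is absolutely continuous --- hence the identity, valid on a dense full-measure set $J$, extends to every $t\in(0,T)$; then sending $R\to\infty$ for fixed $t$ gives $\mu_t(\rd)=\nu(\rd)$ pointwise without any tightness argument. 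I would rewrite the final paragraph along those lines.
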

	The following proposition is essentially a special case of \cite{rehmeier2022nonlinearFlow}, where the result is proved even for nonlinear Fokker--Planck equations. This proposition ensures that under quite general conditions one can find a vaguely continuous version of a solution to \eqref{FPKE}.
	\begin{proposition}\label{FPKE.solution.vagueVersion}
		Let $\nu \in \mathcal{M}_+(\rd)\cap \mathcal{M}_b(\rd)$. Let $(\mu_t)_{t\in (0,T)}\subseteq \mathcal{M}_+(\rd) \cap \mathcal{M}_b(\rd)$ be a solution to \eqref{FPKE} with $\left.\mu\right|_{t=0}=\nu$. Furthermore, assume that there exists $C>0$ such that $|\mu_t|\leq C$ for all $t\in [0,T]$.
		 Then there exists a vaguely continuous version $(\tilde{\mu})_{t\in [0,T]}$ of $({\mu})_{t\in (0,T)}$ such that $\tilde{\mu}_t=\mu_t$ for a.e. $t\in (0,T)$ and $\tilde{\mu}_0 = \nu$.
	\end{proposition}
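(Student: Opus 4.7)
The plan is to use the distributional formulation of \eqref{FPKE} to obtain, for each fixed test function $\varphi \in C_c^\infty(\rd)$, a continuous (in fact absolutely continuous) representative of $t \mapsto \int_\rd \varphi\, \mathrm{d}\mu_t$, and then to reconcile these representatives across a countable dense family of test functions so as to produce a single measure-valued curve $(\tilde{\mu}_t)_{t \in [0,T]}$.

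First I would fix $\varphi \in C_c^\infty(\rd)$ and test \eqref{FPKE.definition.solution.measure:1} against product functions $\varphi(x)\psi(t)$ with $\psi \in C_c^\infty((0,T))$. This yields
\begin{equation*}
  \int_0^T \psi'(t) \Big(\int_\rd \varphi\, \mathrm{d}\mu_t\Big)\, \mathrm{d}t \;=\; -\int_0^T \psi(t) \Big(\int_\rd (L_t \varphi)\, \mathrm{d}\mu_t\Big)\, \mathrm{d}t ,
\end{equation*}
so $t \mapsto \int_\rd \varphi\, \mathrm{d}\mu_t$ has a distributional derivative on $(0,T)$ represented by the $L^1_{\mathrm{loc}}((0,T))$ function $t \mapsto \int_\rd L_t\varphi\, \mathrm{d}\mu_t$. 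Consequently there is an absolutely continuous function $F_\varphi : [0,T] \to \RR$ that agrees with $t \mapsto \int_\rd \varphi\, \mathrm{d}\mu_t$ outside a $\lambda^1$-null set $N_\varphi \subseteq (0,T)$. The initial condition \eqref{FPKE.definition.solution.measure:2} fixes the value at $0$, namely $F_\varphi(0) = \int_\rd \varphi\, \mathrm{d}\nu$.

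Next I would choose a countable family $(\varphi_n)_{n \in \LN} \subseteq C_c^\infty(\rd)$ that is dense in $C_c(\rd)$ in the inductive-limit (uniform convergence on compacts, with bounded supports) sense, set $N := \bigcup_n N_{\varphi_n}$, which is still $\lambda^1$-null, and define $\tilde{\mu}_t := \mu_t$ for $t \in (0,T)\setminus N$. For $t \in N \cup \{0,T\}$, I would construct $\tilde{\mu}_t$ as the vague limit of $\mu_{s}$ along any sequence $s \to t$ with $s \notin N$: the uniform bound $|\mu_s|(\rd)\le C$ gives sequential vague precompactness via Banach--Alaoglu on $C_0(\rd)^*$, and the continuity of $F_{\varphi_n}$ combined with density of $(\varphi_n)$ guarantees that $\int \varphi\, \mathrm{d}\mu_{s_k} \to F_\varphi(t)$ for every $\varphi \in C_c(\rd)$ (where $F_\varphi$ is constructed for a general $\varphi$ by density and continuity of $\varphi \mapsto F_\varphi(t)$ as a linear functional on $C_c(\rd)$ of norm at most $C$). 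The limit is therefore independent of the chosen sequence, so $\tilde{\mu}_t$ is well defined, nonnegative, bounded by $C$, and by Riesz representation corresponds to a Radon measure on $\rd$. By construction $t \mapsto \int \varphi_n\, \mathrm{d}\tilde{\mu}_t = F_{\varphi_n}(t)$ is continuous on $[0,T]$ for every $n$, and density together with the uniform total-variation bound upgrades this to vague continuity in $C_c(\rd)$, since for any $\varphi \in C_c(\rd)$ and any $\varepsilon > 0$ one can choose $\varphi_n$ close to $\varphi$ in sup norm on a neighborhood of $\mathrm{supp}\,\varphi$ and estimate uniformly in $t$.

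The main obstacle I anticipate is the passage from pointwise continuity of the scalar curves $F_{\varphi_n}$ to genuine vague continuity of a measure-valued curve: one must verify that the prescription $\varphi \mapsto \lim_k \int \varphi\, \mathrm{d}\mu_{s_k}$ is a \emph{bona fide} positive Radon functional at each $t$ (so that Riesz applies) rather than only defined on a countable dense set. The bound $|\mu_s|(\rd)\le C$ handles continuity of the functional, and positivity is preserved under vague limits since $\varphi \ge 0$ implies $\int \varphi\, \mathrm{d}\mu_s \ge 0$ for all $s \notin N$. Finally, the initial condition $\tilde{\mu}_0 = \nu$ follows because for each $\varphi_n$ one has $F_{\varphi_n}(0) = \int \varphi_n\, \mathrm{d}\nu$ by \eqref{FPKE.definition.solution.measure:2}, and density of $(\varphi_n)$ identifies $\tilde{\mu}_0$ with $\nu$ as Radon measures on $\rd$.
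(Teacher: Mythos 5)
Your proof is self-contained, whereas the paper's proof is a one-line citation of \cite[Lemma 2.3]{rehmeier2022nonlinearFlow}, so the routes differ; the overall scheme (continuous representatives $F_\varphi$ of $t\mapsto \int\varphi\,\mathrm{d}\mu_t$ on a countable dense family, then vague compactness from the uniform total-variation bound plus Riesz) is the natural one and most of it is sound. The gap is in the first step. From Definition~\ref{FPKE.definition.solution.measure} one only knows $b^i,a^{ij}\in L^1_{\mathrm{loc}}((0,T)\times\rd;\mu)$, so $g_\varphi(t):=\int_\rd L_t\varphi\,\mathrm{d}\mu_t$ lies in $L^1_{\mathrm{loc}}((0,T))$, not in $L^1((0,T))$. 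This yields a \emph{locally} absolutely continuous representative of $t\mapsto\int\varphi\,\mathrm{d}\mu_t$ on the open interval $(0,T)$, but your sentence ``consequently there is an absolutely continuous function $F_\varphi:[0,T]\to\RR$'' is not a valid inference: a bounded function with $L^1_{\mathrm{loc}}$ derivative on $(0,T)$ need not extend continuously to the endpoints.

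Continuity at $t=0$ can in fact be rescued, but not by the reason you give: the initial condition \eqref{FPKE.definition.solution.measure:2} only asserts convergence along a full-measure set $J_\varphi$, and one needs the observation that since $F_\varphi$ is continuous on $(0,T)$ and $J_\varphi$ has full measure (hence meets every interval), convergence of $F_\varphi$ along $J_\varphi$ forces convergence along \emph{all} $t\to 0^+$. This is an easy but necessary $\varepsilon$-argument that you should spell out. Continuity at $t=T$ is a genuinely separate problem and is not addressed at all: there is no condition at $T$ analogous to \eqref{FPKE.definition.solution.measure:2}, and $g_\varphi$ need not be integrable near $T$, so $F_\varphi$ may oscillate without a limit there (take $d=1$, $a\equiv 0$, $b(t,x)=c(t)H(x)e^{x^2}$ with $H(x)=\int_{-\infty}^{x}e^{-y^2}\,\mathrm{d}y$ and $c(t)=-\tfrac{\cos(\ln(T-t))}{T-t}$; then $\mu_t = e^{\sin(\ln T)-\sin(\ln(T-t))}e^{-x^2}\mathrm{d}x$ solves the equation on $(0,T)$ with uniformly bounded mass but has no vague limit at $T$). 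To close this you either need an extra hypothesis guaranteeing $g_\varphi\in L^1$ near $T$ (which does hold in the applications in Sections~\ref{section.SDE.applications.SDE} and~\ref{section.SDE.applications.FPKE}, where $b,a$ and $\mu$ are globally integrable), or an argument extracted from Rehmeier's lemma that you have not reproduced.
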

	\begin{proof}
		The assertion has been proved for $(\mu_t)_{t\in (0,T)}$ being a family of subprobability measures and $\nu$ being a subprobability measure in \cite[Lemma 2.3]{rehmeier2022nonlinearFlow}.
		However, the same proof works here as well.	\end{proof}
	\begin{remark}
		If, in the situation of Proposition \ref{FPKE.solution.vagueVersion}, $\tilde{\mu}_t(\rd)=\nu(\rd)$ for all $t\in [0,T]$, then it is standard to see that $(\tilde{\mu}_t)_{t\in [0,T]}$ is narrowly continuous.
	\end{remark}

\textbf{Acknowledgements:}
	The author would like to thank Michael R\"ockner for valuable discussions.
Furthermore, the author would like to thank Lukas Wresch for a useful discussion on Remark \ref{SDE.PU.remark.SIcounterexample}.
Moreover, the support by the German Research Foundation (DFG) through the IRTG 2235 and the CRC 1283 is gratefully acknowledged.
\printbibliography
\end{document}